\title{The path algebra as a left adjoint functor}
\author[1]{Kostiantyn Iusenko}
\author[2]{John William MacQuarrie}
\affil[1]{Instituto de Matem\'{a}tica e Estat\'{i}stica, USP, Brazil,\newline {\tt \href{mailto:iusenko@ime.usp.br}{iusenko@ime.usp.br}}}
\affil[2]{Universidade Federal de Minas Gerais, Brazil
, \newline {\tt \href{mailto:john@mat.ufmg.br}{john@mat.ufmg.br}}}
\DeclareSymbolFont{AMSb}{U}{msb}{m}{n}
\DeclareMathSymbol{\N}{\mathbin}{AMSb}{"4E}
\DeclareMathSymbol{\Z}{\mathbin}{AMSb}{"5A}
\DeclareMathSymbol{\R}{\mathbin}{AMSb}{"52}
\DeclareMathSymbol{\Q}{\mathbin}{AMSb}{"51}
\DeclareMathSymbol{\I}{\mathbin}{AMSb}{"49}
\DeclareMathSymbol{\C}{\mathbin}{AMSb}{"43}
\newcommand{\dbl}{[\hspace{-0.2ex}[}
\newcommand{\dbr}{]\hspace{-0.2ex}]}
\newcommand{\db}[1]{\dbl {#1} \dbr}
\newcommand{\ctens}{\widehat{\otimes}}
\newcommand{\iso}{\cong}
\newcommand{\invlim}{\underleftarrow{\textnormal{lim}}\,}
\newcommand{\onto}{\twoheadrightarrow}
\newcommand{\id}{\textnormal{id}}
\newcommand{\Hom}{\textnormal{Hom}}
\newcommand{\tn}[1]{\textnormal{#1}}
\newcommand{\cat}[1]{\tn{\textbf{#1}}}
\newcommand{\ospeech}{\textquotedblleft}
\newcommand{\cspeech}{\textquotedblright}
\numberwithin{equation}{section}
\begin{document}

\newtheorem{defn}[equation]{Def{i}nition}
\newtheorem{prop}[equation]{Proposition}
\newtheorem{lemma}[equation]{Lemma}
\newtheorem{theorem}[equation]{Theorem}
\newtheorem{corol}[equation]{Corollary}
\newtheorem{exercise}[equation]{Exercise}
\newtheorem{remark}[equation]{Remark}

\theoremstyle{definition}
\newtheorem{example}[equation]{Examples}

\maketitle

\begin{abstract}
We consider an intermediate category between the category of finite quivers and a certain category of pseudocompact associative algebras whose objects include all pointed finite dimensional algebras.  We define the completed path algebra and the Gabriel quiver as functors.  We give an explicit quotient of the category of algebras on which these functors form an adjoint pair.  We show that these functors respect ideals, obtaining in this way an equivalence between related categories.
\end{abstract}

\section{Introduction}

A remarkably simple and ingenious construction of Gabriel \cite{Gabriel_thesis} showed that the representation theory of finite dimensional associative algebras could be treated combinatorially.  This approach revolutionized the subject and remains one of the main tools when working in the area.  Without going into details (we do so later), the idea of the construction is as follows: given a finite dimensional pointed associative algebra $A$, one constructs a finite directed graph $Q_A$ (the \emph{ Gabriel quiver} of $A$).  From $Q_A$, one obtains an associative algebra, called the \emph{path algebra} of $Q_A$, having $A$ as a quotient.  Gabriel observed that a great deal of the representation theory of $A$ can be treated in terms of the path algebra.

When we construct the (completed) path algebra $B$ of a quiver $Q$, the vertices of $Q$ correspond to $B/J(B)$ (where $J(B)$ denotes the Jacobson radical of $B$) and the arrows correspond to $J(B)/J^2(B)$, while the deeper radical layers are \ospeech extended freely\cspeech, in the sense that they are as large as possible given the constraints imposed by $Q$.  As the path algebra is a free construction, it is natural to suppose that it should be a left adjoint.  What is more, there is an obvious candidate for the corresponding right adjoint: the Gabriel quiver construction.

We show in this article that this reasonably intuitive idea is correct, with some care.  For example, the arrows of the Gabriel quiver of a given algebra $A$ are given by a choice of basis for certain subspaces of $J(A)/J^2(A)$ -- a choice that kills any hope of the Gabriel quiver construction being functorial.  We avoid this problem by simply replacing the arrows of a quiver by vector spaces, thus avoiding the choice of a basis.  Other, more subtle choices involved in the Gabriel quiver construction are treated by considering orbits under a certain group action and by
defining an explicit quotient of the category of algebras.  The adjunction we obtain thus explains in a very precise way what information is transferred in the conversation between quivers and algebras.

Our main interest is in the study of finite dimensional associative algebras.  However, finite dimensional algebras and finite quivers do not quite match up, since the path algebra of a quiver with loops or cycles is not finite dimensional.  For this reason, it is convenient to work with a larger class of algebras, namely the class of those pseudocompact algebras $A$ such that $A/J^2(A)$ is finite dimensional.

\medskip

The paper is organized as follows.  In Section \ref{section prelims} we present general results that we will require, many of which are well known.  In Section \ref{section categories} we introduce the categories of interest to us.  On the quiver side, we replace normal quivers with the category $\cat{VQuiv}$ of what we call Vquivers, essentially replacing the set of arrows between vertices with vector spaces.  On the algebra side, we define a certain quotient $\cat{PAlg}_1$ of our category of pointed algebras, which we believe warrants further study.  In Section \ref{section functors} we introduce several functors between the categories defined in Section \ref{section categories} -- the completed path algebra and a functorial version of the Gabriel quiver construction.  In Section \ref{section main adjunction} we prove our first main theorem: that the completed path algebra functor is left adjoint to the Gabriel quiver functor, both treated as functors between $\cat{VQuiv}$ and $\cat{PAlg}_1$.  In Section \ref{section related adjunctions} we prove two related adjunctions.  Firstly, thinking of the adjunction of Section \ref{section main adjunction} as giving a hereditary approximation of a given algebra, we show an easier adjunction that could be thought of as a semisimple approximation to an algebra. Secondly, we demonstrate that the Gabriel quiver functor also has a right adjoint, defining it explicitly.  Finally, in Section \ref{section adjoint equivalence} we show how ideals can be brought into the picture, obtaining in this way an equivalence of categories.

\medskip

We mention two motivations for this work.  Firstly, the adjunction of Section \ref{section main adjunction} provides a new tool in the study of finite dimensional associative algebras -- one may now attack a given problem using formal properties of adjoint functors.  The second motivation is generalization.  Pseudocompact algebras appear as completed group algebras of profinite groups, and hence are studied in Galois theory, algebraic number theory, algebraic geometry and profinite group theory.  Functorial definitions of foundational objects of the theory of finite dimensional associative algebras will allow both definitions and theorems to extend naturally to this wider class of algebras.  In the future, we will use the results of this article to further develop a combinatorial approach to the representation theory of arbitrary pseudocompact algebras.

\section*{Acknowledgements} The first author was partially
supported by Fapesp grants 2014/09310-5, 2015/00116-4 and by CNPq grant 456698/2014-0. The second author was  partially supported by CNPq grant 443387/2014-1 and by FAPEMIG grant PPM-00481-16.

% \medskip

% \textbf{Acknowledgements} The first author was partially
% supported by Fapesp grants 2014/09310-5, 2015/00116-4 and by CNPq grant 456698/2014-0. The second author was  partially supported by CNPq grant 443387/2014-1 and by FAPEMIG grant PPM-00481-16.

\section{Preliminaries}\label{section prelims}

We collect here well-known results and constructions in associative algebras that we will require later.  Throughout this article, $k$ will denote a perfect field treated as a discrete topological ring.

\begin{defn}[\cite{Brumer}]
A \emph{pseudocompact algebra} is an associative, unital, Hausdorff topological $k$-algebra $A$ possessing a basis of 0 consisting of open ideals $I$ having cofinite dimension in $A$ that intersect in 0 and such that $A \iso \invlim_I A/I$.

\medskip

Let $A,B$ be pseudocompact algebras.  A \emph{pseudocompact $A$-($B$-bi)module} is a topological $A$-($B$-bi)module $U$ possessing a basis of 0 consisting of open sub(bi)modules $V$ of finite codimension that intersect in 0 and such that $U \iso \invlim_V U/V$.
\end{defn}

The category of pseudocompact modules for a pseudocompact algebra has exact inverse limits \cite[\S 1]{Brumer}.

\begin{lemma}\label{linearly compact facts}
Let $A$ be a pseudocompact algebra and $V,U$ pseudocompact $A$-modules.
\begin{enumerate}
\item The module $V$ is \emph{linearly compact}, meaning that if ever we have a collection of cosets of closed subspaces $\{V_i\}$ of $V$ with the finite intersection property, then $\bigcap V_i\neq \varnothing$.

\item\label{linearly compact facts part 2} If ever $\rho:V\to U$ is a continuous homomorphism, then $\rho(V)$ is linearly compact and hence closed in $U$.

\item The submodule abstractly generated by a finite subset of $V$ is closed.
\end{enumerate}
\end{lemma}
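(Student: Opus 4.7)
The plan is to prove part (1) by exploiting the inverse-limit description of a pseudocompact module, and then to deduce parts (2) and (3) from (1) in an essentially formal manner.

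For part (1), I would write $V\iso\invlim_\alpha V/V_\alpha$, where $\{V_\alpha\}$ is a basis of open submodules of finite codimension. Each quotient $V/V_\alpha$ is a finite-dimensional discrete $k$-module, hence a compact Hausdorff space. The inverse limit embeds as a closed subspace of $\prod_\alpha V/V_\alpha$, which is compact by Tychonoff's theorem, so $V$ itself is topologically compact. Any coset of a closed subspace is a closed subset of $V$, and in a compact space any family of closed sets with the finite intersection property has nonempty intersection; linear compactness follows.

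For part (2), given a family $\{u_i+U_i\}$ of cosets of closed subspaces of $\rho(V)$ with the FIP, I would pick $v_i$ with $\rho(v_i)=u_i$ and observe that $\{v_i+\rho^{-1}(U_i)\}$ is a family of cosets of closed subspaces of $V$ with the FIP. By (1) the intersection is nonempty, and applying $\rho$ produces an element of $\bigcap_i (u_i+U_i)$. Thus $\rho(V)$ is linearly compact. To see $\rho(V)$ is closed, let $u\in\overline{\rho(V)}$ and consider $\{\rho(V)\cap(u+W)\}$ as $W$ ranges over open submodules of $U$. Each set is nonempty (since $u$ lies in the closure), and picking $\ell_W$ in the intersection shows $\rho(V)\cap(u+W)=\ell_W+(\rho(V)\cap W)$, a coset of the closed submodule $\rho(V)\cap W$ of $\rho(V)$. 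These cosets have the FIP, so linear compactness of $\rho(V)$ produces $\ell\in\rho(V)$ with $\ell-u\in W$ for every $W$; Hausdorffness of $U$ gives $\bigcap_W W=0$, so $\ell=u\in\rho(V)$.

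For part (3), the submodule abstractly generated by $v_1,\ldots,v_n$ is the image of the continuous $A$-module map $\mu\colon A^n\to V$ given by $(a_1,\ldots,a_n)\mapsto\sum_i a_iv_i$, where $A^n$ carries its natural pseudocompact topology as a finite product. By part (2), $\mu(A^n)$ is closed in $V$. No single step should present serious difficulty; the only subtlety is bookkeeping—keeping track of which subspaces are closed and which cosets live inside which modules. The heart of the argument is the compactness observation in (1), after which the remaining parts are formal consequences.
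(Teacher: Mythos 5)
Your part (1) contains a genuine gap. You assert that each quotient $V/V_\alpha$ is ``a finite-dimensional discrete $k$-module, hence a compact Hausdorff space,'' and then invoke Tychonoff. But $k$ is an arbitrary perfect field with the discrete topology (e.g.\ $k=\Q$), so a nonzero finite-dimensional discrete $k$-vector space is an \emph{infinite} discrete space and is not compact; the singletons form an open cover with no finite subcover. Consequently $\prod_\alpha V/V_\alpha$ is not compact and the closed-sets-with-FIP argument collapses. This is exactly the point of working with \emph{linear} compactness rather than topological compactness: a finite-dimensional discrete module is linearly compact (by a descending-chain argument on the finite intersections of the subspaces involved) without being compact. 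The correct route — and the one the paper takes — is to prove or cite that finite-dimensional discrete modules are linearly compact and that inverse limits (equivalently, closed submodules of products) of linearly compact modules are linearly compact; the latter is a Tychonoff \emph{analogue} for linear compactness (Zelinsky, Lefschetz) whose proof uses Zorn's lemma on maximal families with the finite intersection property, and it is not a formal consequence of topological compactness of the factors. Your overall architecture survives, but the key input to part (1) must be replaced.

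Parts (2) and (3) are correct. For (2) the paper simply cites Lefschetz, whereas you give a direct and valid argument: the lifting of cosets along $\rho$ establishes linear compactness of the image, and the family $\{\rho(V)\cap(u+W)\}$ indexed by open submodules $W$ correctly forces $u\in\rho(V)$ since $\bigcap_W W=0$. (One small point worth making explicit: a subspace $U_i$ closed in $\rho(V)$ is of the form $C_i\cap\rho(V)$ with $C_i$ closed in $U$, so $\rho^{-1}(U_i)=\rho^{-1}(C_i)$ is indeed closed in $V$.) Part (3) coincides with the paper's argument via the continuous map $A^n\to V$. So only the foundation in (1) needs repair.
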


\begin{proof}
\begin{enumerate}
\item A finite dimensional $A$-module is linearly compact \cite[II.27.7]{Lefschetz} and hence $V$ is linearly compact by \cite[Proposition 4]{Zelinsky}.

 \item This is \cite[II.27.4, II.27.5]{Lefschetz}.
\item By \cite[Lemma 1.2]{Brumer} any finitely generated submodule of $V$ is the homomorphic image of a continuous homomorphism $\rho:A^n\to V$ of the free $A$-module $A^n$ (some $n\in \N$) and hence is closed by Part \ref{linearly compact facts part 2}.
\end{enumerate}
\end{proof}

The \textit{Jacobson radical} $J(A)$ of a pseudocompact algebra $A$ is the intersection of the maximal open left ideals of $A$.  We have that $J(A)$ is equal to the intersection of the maximal open right ideals of $A$, and (see \cite[\S 1]{Brumer}) to the intersection of the maximal open two-sided ideals of $A$.

\begin{lemma}\label{radical limit of radicals}
Write $A = \invlim_{I} \{A/I\,,\,\alpha_{II'}:A/I'\to A/I\}$ as an inverse limit of finite dimensional quotient algebras $A/I$.  Then
\[J(A) = \invlim_I J(A/I).\]
\end{lemma}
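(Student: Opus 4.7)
The plan is to identify the inverse limit $\invlim_I J(A/I)$ with the subset $\bigcap_I \pi_I^{-1}(J(A/I))$ of $A$, where $\pi_I : A \to A/I$ is the canonical projection, via the identification $A \cong \invlim_I A/I$. (The system $\{J(A/I)\}$ is indeed an inverse system since the transition maps $\alpha_{II'}$ are surjective algebra homomorphisms between finite dimensional algebras.) Then I would prove the two inclusions separately, using the characterization of $J(A)$ as the intersection of all maximal open left ideals of $A$.

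For the inclusion $J(A) \subseteq \invlim_I J(A/I)$, I would fix $I$ and, for each maximal left ideal $\bar M$ of the finite dimensional algebra $A/I$, observe that $\pi_I^{-1}(\bar M)$ is a maximal open left ideal of $A$: it is open because it contains the open ideal $I$, and maximal because $A/\pi_I^{-1}(\bar M) \cong (A/I)/\bar M$ is simple. Hence any $x \in J(A)$ lies in $\pi_I^{-1}(\bar M)$, so $\pi_I(x) \in \bar M$. Since $A/I$ is finite dimensional, $J(A/I)$ is the intersection of its maximal left ideals, giving $\pi_I(x) \in J(A/I)$ for each $I$.

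For the reverse inclusion, I would take $x$ with $\pi_I(x) \in J(A/I)$ for every $I$, and let $M$ be an arbitrary maximal open left ideal of $A$. Since $M$ is open and the ideals of the inverse system form a basis of neighborhoods of $0$, there is some $I$ in the system with $I \subseteq M$. Then $M/I$ is a maximal left ideal of $A/I$, so $\pi_I(x) \in J(A/I) \subseteq M/I$ and thus $x \in M$. As $x$ lies in every maximal open left ideal, $x \in J(A)$.

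There is no serious obstacle here; the argument is really just the standard correspondence between open ideals of $A$ containing some fixed $I$ and ideals of $A/I$, together with the definition of $J(A)$ via open ideals (as recalled just before the lemma). The only point that needs a moment's care is observing that maximal open left ideals of $A$ automatically contain some term of the inverse system, which is exactly the content of the ideals $I$ being a basis of neighborhoods of $0$.
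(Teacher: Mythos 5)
Your proposal is correct and follows essentially the same route as the paper: both identify $\invlim_I J(A/I)$ inside $A$ and prove the two inclusions via the characterization of $J(A)$ as the intersection of maximal open left ideals, together with the correspondence between open (maximal) left ideals of $A$ containing $I$ and (maximal) left ideals of $A/I$. The only difference is cosmetic: the paper argues the second inclusion by contrapositive and is terser about why $\pi_I(J(A))\subseteq J(A/I)$, a point you spell out explicitly.
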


\begin{proof}
Since the quotients $\alpha_{II'}$ are surjective, it is immediate that $\alpha_{II'}(J(A/I'))\subseteq J(A/I)$.  Thus, the restriction of the inverse system of $A/I$ to their Jacobson radicals indeed yields an inverse system with inverse limit $\invlim J(A/I)\subseteq A$.  Since an element $x\in J(A)$ maps into each $J(A/I)$, it follows that $J(A)\subseteq \invlim J(A/I)$.  On the other hand, given $x\not\in J(A)$, there is some open maximal left ideal $M$ not containing $x$.  Working within the cofinal subsystem of $A/I$ with $I\subseteq M$, we see that $x+I\not\in J(A/I)$, and hence $x\not\in\invlim J(A/I)$, so that $\invlim J(A/I)\subseteq J(A)$.
\end{proof}

\begin{corol}\label{radical surjective}
Let $A,B$ be pseudocompact algebras and let $\alpha:A\to B$ be a continuous surjective algebra homomorphism.  Then $\alpha(J(A)) = J(B)$. 
\end{corol}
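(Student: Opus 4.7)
The plan is to prove the two inclusions separately. The inclusion $\alpha(J(A)) \subseteq J(B)$ is the easy half: for each maximal open left ideal $N$ of $B$, the preimage $\alpha^{-1}(N)$ is an open left ideal of $A$ by continuity, and it is maximal because $A/\alpha^{-1}(N) \iso B/N$ is simple. Hence $J(A) \subseteq \alpha^{-1}(N)$ for every such $N$, and intersecting over all of them together with the surjectivity of $\alpha$ yields $\alpha(J(A)) \subseteq J(B)$.

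For the reverse inclusion $J(B) \subseteq \alpha(J(A))$, I would fix $b \in J(B)$ and pick any $a_0 \in A$ with $\alpha(a_0) = b$; write $K = \ker\alpha$. The task becomes finding $a \in J(A) \cap (a_0 + K)$. For each open ideal $I$ of $A$, I would consider
\[ F_I = J(A) \cap (a_0 + K + I). \]
Assuming each $F_I$ is nonempty, it is a coset of the closed subspace $J(A) \cap (K+I)$ of $A$, and the family has the finite intersection property since $F_{I \cap I'} \subseteq F_I \cap F_{I'}$. Lemma \ref{linearly compact facts}(1), applied to $A$ viewed as a pseudocompact $A$-module, then supplies $a \in \bigcap_I F_I$. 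Since $K$ is closed and the open ideals of $A$ form a neighborhood basis of $0$, $\bigcap_I (K+I) = K$, so $a - a_0 \in K$ and $\alpha(a) = b$.

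The main obstacle is showing $F_I \neq \varnothing$ for each open ideal $I$ of $A$. This reduces (by taking $J = I+K$) to proving that $\pi_J:J(A)\to J(A/J)$ is surjective for every open ideal $J$ of $A$, since the image of $b$ in $A/(I+K) \iso B/\alpha(I)$ lies in $J(A/(I+K))$ by $b \in J(B)$. By Lemma \ref{radical limit of radicals}, $J(A) = \invlim J(A/J')$, and for $J' \subseteq J$ the surjection $A/J' \onto A/J$ restricts to $J(A/J') \onto J(A/J)$ (a classical property of surjections between finite dimensional algebras, since the image of a nilpotent ideal is nilpotent). A Mittag-Leffler / linear-compactness argument on the resulting directed system of radicals then produces a coherent lift in $\invlim J(A/J')$ of any chosen element of $J(A/J)$; threading this compatible-lift argument cleanly through the possibly uncountable directed system of open ideals is the most delicate part of the proof.
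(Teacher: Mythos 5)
Your argument is correct, but it follows a genuinely different route from the paper's. The paper proves the hard inclusion by a three-stage reduction (both algebras finite dimensional; then $A$ general with $B$ finite dimensional; then both general), handling the finite-dimensional case via the module-radical identity $\tn{Rad}_B(U)=J(B)U$ applied to $B$ viewed as an $A$-module, and passing to the limit by building surjective maps of inverse systems and invoking the exactness of $\invlim$ in the pseudocompact category. You instead lift a fixed $b\in J(B)$ directly, using the linear compactness of $A$ (Lemma \ref{linearly compact facts}(1)) on the family $F_I=J(A)\cap(a_0+K+I)$; this makes the compactness completely explicit and avoids setting up maps of inverse systems, but at the price of running two layers of compactness where one suffices: the nonemptiness of each $F_I$ already requires the surjectivity of $J(A)\to J(A/J)$ for open ideals $J$, and the ``delicate'' Mittag--Leffler step you worry about at the end is nothing more than the exactness of $\invlim$ applied to the surjective map from the system $\{J(A/J')\}_{J'\subseteq J}$ of Lemma \ref{radical limit of radicals} onto the constant system $J(A/J)$ --- a fact the paper records in Section \ref{section prelims} and simply cites, so you may do the same and the uncountability of the index set causes no trouble. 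One spot deserves tightening: your parenthetical justification of the finite-dimensional case (``the image of a nilpotent ideal is nilpotent'') only yields $\pi(J(R))\subseteq J(S)$, which is the easy inclusion; the direction you actually need, $J(S)\subseteq\pi(J(R))$, follows because $S/\pi(J(R))$ is a quotient of the semisimple algebra $R/J(R)$ and hence semisimple. This is indeed classical, so it is an imprecise citation rather than a gap, but it is precisely the nontrivial content of the statement in the finite-dimensional case, which the paper takes care to prove rather than quote.
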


\begin{proof}
That $\alpha(J(A))\subseteq J(B)$ is immediate.  To prove the other inclusion, we begin by supposing that $A, B$ are finite dimensional.  Recall \cite[Proposition 3.5]{ARS} that the radical $\tn{Rad}_B(U)$ of a finitely generated $B$-module $U$ is given by $J(B)U$ (and similarly for $A$).  We can treat $B$ either as a $B$-module, or as an $A$-module via $\alpha$, and since $\alpha$ is surjective it follows that $\tn{Rad}_BB = \tn{Rad}_AB$.  Thus
\[
	J(B) = J(B)B = \tn{Rad}_B B = \tn{Rad}_A B = J(A)\cdot B = \alpha(J(A)) B \subseteq \alpha(J(A)). 
\]

Now let $A$ be general and $B$ finite dimensional.  Since $B$ is discrete and $\alpha$ is continuous, we can consider a cofinal subset of open ideals of $A$ contained in the kernel of $\alpha$.  By factorizing $\alpha$ through these quotients we obtain a map of inverse systems $\{\alpha_I:A/I\to B\,|\,I\lhd_O A, I\leqslant \tn{Ker}(\alpha)\}$.  Restricting and corestricting this inverse system to the Jacobson radicals, we obtain a surjective map of inverse systems $\alpha_I:J(A/I)\onto J(B)$, whose inverse limit is $\alpha:J(A)\to J(B)$ by Lemma \ref{radical limit of radicals}.  It is onto by the exactness of $\invlim$.

Finally, allowing both $A$ and $B = \invlim\{B/K,\beta_{KK'}\}$ to be general, the obvious composition $\beta_K\alpha:A\to B\to B/K$ is a surjective map onto the finite dimensional algebra $B/K$ and hence it restricts to a surjection $J(A)\onto J(B/K)$ for each $K$.  We obtain in this way a surjective map of inverse systems and the result follows from the exactness of $\invlim$.
\end{proof}

Define $J^0(A)$ to be $A$ and $J^1(A) = J(A)$.  For each $n>1$, define $J^n(A)$ to be $J(A)J^{n-1}(A)$.  When $A$ is understood, we will denote these ideals by $J^n$, rather than $J^n(A)$.

\begin{lemma}\label{radical facts when J by J2 is fd}
Let $A$ be a pseudocompact algebra with $A/J^2$ finite dimensional.
\begin{enumerate}
\item The Jacobson radical $J$ is finitely generated as an $A$-module.
\item If $U$ is a finitely generated pseudocompact $A$-module, then $JU$ is a closed submodule of $U$.
\item $J^n$ is a closed, finitely generated submodule of $A$ for each $n\in \N$.
\end{enumerate}
\end{lemma}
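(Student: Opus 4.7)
The plan is to prove (1) via a Nakayama-type argument exploiting the pseudocompact structure, and then deduce (2) and (3) from (1) together with Lemma \ref{linearly compact facts}(3).

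For part (1), the hypothesis that $A/J^2$ is finite dimensional forces $J/J^2$ to be finite dimensional over $k$. I would lift a $k$-basis of $J/J^2$ to elements $x_1, \ldots, x_r \in J$ and set $M = Ax_1 + \cdots + Ax_r$, which is closed in $A$ by Lemma \ref{linearly compact facts}(3). By construction $J = M + J^2$, and since $JM \subseteq M$, iterating this identity gives $J = M + J^n$ for every $n \geq 1$. The task is to show $J = M$, equivalently $J/M = 0$. Since $M$ is closed in the pseudocompact module $J$, the quotient $J/M$ is pseudocompact, and so admits a base of open $A$-submodules whose intersection is zero; it therefore suffices to check that the only open submodule of $J/M$ is $J/M$ itself.

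An open submodule of $J/M$ lifts to an open $A$-submodule $V$ of $J$ containing $M$, with $J/V$ finite dimensional. Let $K$ denote the annihilator of $J/V$ in $A$, an open two-sided ideal; then $J/V$ becomes a finitely generated $A/K$-module (generators can be taken to be any $k$-basis). By Corollary \ref{radical surjective}, the image of $J$ in $A/K$ is $J(A/K)$. From $J = M + J^2$ and $M \subseteq V$, one deduces $J \cdot (J/V) = (J^2+V)/V = J/V$, and hence $J(A/K)\cdot (J/V) = J/V$. The classical Nakayama lemma over $A/K$ then forces $J/V = 0$, i.e.\ $V = J$. Therefore $J/M = 0$ and $J = M$ is finitely generated.

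For part (2), if $U$ is generated by $u_1, \ldots, u_n$, Lemma \ref{linearly compact facts}(3) gives $U = Au_1 + \cdots + Au_n$ on the nose, so $JU = \sum_j J u_j$. Using (1), this equals $\sum_{i,j} A(x_i u_j)$, a finitely generated $A$-submodule, hence closed by Lemma \ref{linearly compact facts}(3). For part (3), induct on $n$: the base cases $n=0,1$ are trivial and (1) respectively, and for $n \geq 2$ the inductive hypothesis makes $J^{n-1}$ a finitely generated pseudocompact $A$-module, so (2) applied to $U = J^{n-1}$ shows $J^n = J \cdot J^{n-1}$ is closed, and the explicit description of $JU$ in the proof of (2) shows it is finitely generated.

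The main obstacle is (1): carrying out Nakayama's argument in the pseudocompact setting before we even know $J$ is finitely generated. The key move is that although $J/M$ need not itself be finitely generated over $A$ a priori, each finite-dimensional quotient $J/V$ is, and there ordinary Nakayama applies once we combine the identity $M+J^2 = J$ with Corollary \ref{radical surjective} to identify the radical of the finite-dimensional quotient algebra.
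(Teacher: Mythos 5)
Your argument is correct, and parts (2) and (3) coincide with the paper's own proof: express $JU$ via finite generating sets of $J$ and $U$, conclude it is finitely generated and hence closed by Lemma \ref{linearly compact facts}, and induct for (3). Where you genuinely diverge is part (1). The paper simply notes that $J/J^2$ is finitely generated and invokes the topological Nakayama lemma \cite[Corollary 1.5]{Brumer}, which says that a pseudocompact module $U$ with $U/J(A)U$ finitely generated is itself finitely generated. You instead reprove that statement in the special case $U=J$: lift a basis of $J/J^2$ to a closed finitely generated submodule $M$ with $J=M+J^2$, pass to the finite-dimensional quotients $J/V$ witnessing pseudocompactness of $J/M$, and apply classical Nakayama over the finite-dimensional algebra $A/K$ after identifying its radical with the image of $J$ via Corollary \ref{radical surjective}. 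This reduction to finite-dimensional quotients is sound -- it is essentially Brumer's own strategy -- and buys self-containedness at the cost of length. The only steps you assert without justification (that the annihilator $K$ of the discrete finite-dimensional module $J/V$ is an open two-sided ideal, and that $J/M$ is again pseudocompact with a base of finite-codimension open submodules intersecting in zero) are standard facts from \cite[\S 1]{Brumer} and do not constitute gaps.
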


\begin{proof}
\begin{enumerate}
\item The module $J/J^2$ is finite dimensional, hence finitely generated.  Thus by \cite[Corollary 1.5]{Brumer}, $J$ is finitely generated.

\item We have that $JU$ is generated by elements of the form  $xu$, where $x$ runs through a finite generating set for $J$ (which exists by Part 1) and $u$ runs through a finite generating set of $U$.  Hence $JU$ is finitely generated, thus closed in $U$ by Lemma \ref{linearly compact facts}.

\item Both $J^0 = A$ and $J^1 = J$ are closed, finitely generated submodules of $A$.  Suppose that some $J^m$ is finitely generated and closed.  Then $J^{m+1} = JJ^m$ is finitely generated, and hence closed by Lemma \ref{linearly compact facts}.
\end{enumerate}
\end{proof}
Working by induction and observing that we have surjective maps from $J/J^2\times J^{n}/J^{n+1}$ to $J^{n+1}/J^{n+2}$, one sees that $A/J^n$ is finite dimensional, so that each $J^n$ is in fact open in $A$.

\begin{lemma}\label{radical powers surjective}
Let $A,B$ be pseudocompact algebras with $A/J^2(A)$ and $B/J^2(B)$ finite dimensional and let $\alpha:A\to B$ be a continuous surjective algebra homomorphism.  Then $\alpha(J^n(A)) = J^n(B)$ for each $n\geqslant 0$. 
\end{lemma}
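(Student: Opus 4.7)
The plan is straightforward induction on $n$. The base case $n=0$ is simply the hypothesis that $\alpha$ is surjective, since $J^0(A) = A$ and $J^0(B) = B$. The case $n=1$ is exactly Corollary \ref{radical surjective}, which already covers the surjectivity on Jacobson radicals without needing the finite-dimensionality of $A/J^2(A)$ or $B/J^2(B)$.

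For the inductive step, I would assume $\alpha(J^n(A)) = J^n(B)$ and aim to conclude $\alpha(J^{n+1}(A)) = J^{n+1}(B)$. Recall that by definition $J^{n+1}(A) = J(A) J^n(A)$, meaning the abstract submodule of $A$ spanned by finite sums $\sum x_i y_i$ with $x_i \in J(A)$ and $y_i \in J^n(A)$ (under the finite dimensionality hypothesis on $A/J^2(A)$, Lemma \ref{radical facts when J by J2 is fd} guarantees that this abstract product is already closed, so there is no ambiguity with a topological definition). Since $\alpha$ is an algebra homomorphism, it carries a finite sum $\sum x_i y_i$ to $\sum \alpha(x_i)\alpha(y_i)$, whence
\[
\alpha(J^{n+1}(A)) = \alpha(J(A))\cdot \alpha(J^n(A)),
\]
where the right-hand side denotes the abstract product of submodules of $B$. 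Applying Corollary \ref{radical surjective} to the first factor and the inductive hypothesis to the second gives $\alpha(J(A))\cdot \alpha(J^n(A)) = J(B)\cdot J^n(B) = J^{n+1}(B)$, closing the induction.

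There is no serious obstacle here: once one has the $n=1$ case from Corollary \ref{radical surjective}, the multiplicativity of $\alpha$ on finite sums of products does all the work, and the hypothesis on $A/J^2(A)$ and $B/J^2(B)$ enters only to ensure that the inductive definition $J^{n+1} = J\cdot J^n$ agrees with any natural topological variant and that each $J^n$ is a sensible closed finitely generated submodule in the sense of Lemma \ref{radical facts when J by J2 is fd}.
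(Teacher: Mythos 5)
Your proof is correct and follows essentially the same route as the paper, which simply states that the result follows by induction with Corollary \ref{radical surjective} as base case and Lemma \ref{radical facts when J by J2 is fd} ensuring the abstract product $J\cdot J^{n}$ is the right (closed) object. Your write-up just makes explicit the key point that $\alpha$ of a finite sum of products equals the corresponding sum of products of images, which is exactly what the inductive step needs.
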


\begin{proof}
This follows by induction using Lemma \ref{radical facts when J by J2 is fd}, with Corollary \ref{radical surjective} as base case.
\end{proof}

\begin{prop}\label{prop complete wrt J}
Let $A$ be a pseudocompact algebra with $A/J^2$ of finite dimension.  Then $A$ is complete with respect to the $J$-adic topology.  That is,
\[A = \invlim_{n\in \N}A/J^n.\]
\end{prop}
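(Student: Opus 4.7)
The plan is to show that the family $\{J^n\}_{n\in\N}$ is cofinal among the open ideals of $A$ defining the pseudocompact topology, and then conclude by the universal property of inverse limits.

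First I would note that, by the paragraph immediately preceding the proposition, each $J^n$ is a closed, open (because of cofinite dimension) two-sided ideal of $A$. So the inverse system $\{A/J^n\}_{n\in\N}$ is a genuine inverse system of finite dimensional quotients of $A$ with surjective structure maps, and the canonical map $A \to \invlim_n A/J^n$ is continuous. Since $A$ is pseudocompact, we may write $A = \invlim_{I} A/I$ where $I$ runs over a basis of open ideals of cofinite dimension in $A$.

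The key step is to prove that for every such open ideal $I$, there exists some $n$ with $J^n\subseteq I$. Given such an $I$, the quotient $B := A/I$ is a finite dimensional algebra, and the canonical projection $\pi: A \onto B$ is a continuous surjection. Since $B$ is finite dimensional, certainly $B/J^2(B)$ is finite dimensional, so Lemma \ref{radical powers surjective} applies and gives $\pi(J^n(A)) = J^n(B) = J(B)^n$ for every $n$. Now $J(B)$ is the Jacobson radical of a finite dimensional algebra and is therefore nilpotent, so there is some $n$ with $J(B)^n = 0$, which is precisely to say that $J^n(A) \subseteq \ker(\pi) = I$.

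With this cofinality established, the family $\{J^n\}$ is cofinal in the directed set of open ideals of cofinite dimension defining the pseudocompact topology on $A$, and hence the natural map
\[
A \;=\; \invlim_{I} A/I \;\longrightarrow\; \invlim_{n} A/J^n
\]
is an isomorphism of topological algebras. I do not expect any serious obstacle here: the only nontrivial ingredient is the surjectivity on radicals (Corollary \ref{radical surjective} / Lemma \ref{radical powers surjective}), which is what promotes the finite dimensional fact \emph{"the radical is nilpotent"} to the statement that $J^n$ eventually lies inside any prescribed open ideal of cofinite dimension.
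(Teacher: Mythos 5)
Your proof is correct, but it takes a genuinely different route from the paper's. The paper argues in two steps: the maps $A\onto A/J^n$ induce a surjection $A\to\invlim_n A/J^n$ by exactness of $\invlim$ in the pseudocompact category, and the kernel is $\bigcap_n J^n$, which vanishes by an external citation to Simson. You instead prove that the family $\{J^n\}$ is cofinal among the open ideals of cofinite dimension: for any such $I$, the quotient $B=A/I$ is finite dimensional, Lemma \ref{radical powers surjective} gives $\pi(J^n(A))=J^n(B)$, and nilpotency of $J(B)$ forces $J^n(A)\subseteq I$ for large $n$. This is a sound argument with no circularity (Lemma \ref{radical powers surjective} does not depend on the proposition), and it buys slightly more: it shows the $J$-adic topology actually coincides with the given pseudocompact topology, so the identification is automatically a homeomorphism, and it makes the vanishing of $\bigcap_n J^n$ a byproduct rather than an imported fact. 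The trade-off is that you lean on the full strength of Lemma \ref{radical powers surjective} and the nilpotency of radicals of finite dimensional algebras, where the paper's version is shorter and delegates the only nontrivial point to a reference.
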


\begin{proof}
The quotients $A\onto A/J^n$ yield a continuous surjection $A\to \invlim A/J^n$ by exactness of $\invlim$.  Since everybody is pseudocompact, the kernel is $\bigcap J^n = 0$, by \cite[Lemma 5.1(c)]{SimsonCoalg}.
\end{proof}

We state a generalization due to Curtis of the Wedderburn-Malcev Theorem to the algebras that interest us:

\begin{prop}\label{WedderburnMalcev}
Let $A$ be a pseudocompact algebra such that $A/J^2$ is finite dimensional.  There is a closed semisimple subalgebra $\Sigma$ of $A$ having the property that $A = \Sigma\oplus J(A)$ as a vector space.  If $s,t$ are two splittings of $A\to A/J$ as an algebra, then there exists an element $w\in J(A)$ such that
\[s(z) = {}^{1+w}t(z) := (1+w)t(z)(1+w)^{-1}\]
for any $z\in A/J$.
\end{prop}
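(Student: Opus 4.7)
The plan is to reduce to the classical (finite dimensional) Wedderburn--Malcev theorem by exploiting the inverse limit decomposition of Proposition \ref{prop complete wrt J}. Writing $A = \invlim_n A/J^n$, each $A/J^n$ is a finite dimensional $k$-algebra whose Jacobson radical is $J/J^n$ (by Corollary \ref{radical surjective} and the remark after Lemma \ref{radical facts when J by J2 is fd}), and since $k$ is perfect the classical Wedderburn--Malcev theorem applies at each finite level.

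For existence, I would build $\Sigma$ as an inverse limit of semisimple complements $\Sigma_n \subseteq A/J^n$. Start with $\Sigma_1 = A/J$. Inductively, given $\Sigma_n$, classical Wedderburn--Malcev supplies some semisimple complement $\Sigma'_{n+1} \subseteq A/J^{n+1}$; its image in $A/J^n$ is a semisimple complement of $J/J^n$, so by the classical uniqueness clause there is $v \in J/J^n$ conjugating it onto $\Sigma_n$. Lifting $v$ to $A/J^{n+1}$ and conjugating $\Sigma'_{n+1}$ accordingly yields $\Sigma_{n+1}$ mapping onto $\Sigma_n$. Then $\Sigma := \invlim \Sigma_n$ is a closed subalgebra of $A$; its projection to $A/J$ is an isomorphism, so $\Sigma \iso A/J$ is a finite dimensional semisimple algebra, and $A = \Sigma \oplus J(A)$ as a vector space.

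For uniqueness, given two splittings $s, t : A/J \to A$, reduce modulo $J^n$ to obtain splittings $s_n, t_n : A/J \to A/J^n$. Classical Wedderburn--Malcev furnishes $w_n \in J/J^n$ with $s_n = (1+w_n) t_n (1+w_n)^{-1}$. The main obstacle --- and the principal technical step --- is to choose the $w_n$ coherently so that they assemble into an element $w \in J(A) = \invlim J(A/J^n)$ (Lemma \ref{radical limit of radicals}). Proceed by induction on $n$: given a compatible $w_n$, pick any lift $\widetilde{w}_n \in J/J^{n+1}$ and set $r_{n+1} := (1+\widetilde{w}_n)^{-1} s_{n+1} (1+\widetilde{w}_n)$. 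Then $r_{n+1}$ and $t_{n+1}$ are splittings of $A/J^{n+1} \to A/J$ that agree modulo $J^n$, so the standard inductive step of Wedderburn--Malcev (whose obstruction vanishes because $A/J$ is separable over the perfect field $k$) provides $u \in J^n/J^{n+1}$ with $(1+u) t_{n+1} (1+u)^{-1} = r_{n+1}$. Setting $w_{n+1} := \widetilde{w}_n + u$ --- and noting that $(1+\widetilde{w}_n)(1+u) = 1 + \widetilde{w}_n + u$ in $A/J^{n+1}$ since $\widetilde{w}_n u \in J^{n+1}$ --- gives a compatible sequence, and $w := \invlim w_n \in J(A)$ satisfies $s = (1+w)\,t\,(1+w)^{-1}$ as required.
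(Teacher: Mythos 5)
Your argument is correct, but it takes a genuinely different route from the paper: the paper's entire proof is a citation of Curtis's generalization of the Wedderburn--Malcev theorem to complete algebras (\cite[Theorem 1]{Curtis}), whose hypotheses are met because $A=\invlim_n A/J^n$ with each $A/J^n$ finite dimensional. What you have written is, in effect, a self-contained proof of the special case of Curtis's theorem that is needed, reducing to the classical finite dimensional statement over the perfect field $k$ and then handling the only genuinely new issue --- coherence of the choices along the inverse system --- in both halves: conjugating each $\Sigma'_{n+1}$ so that it lies over $\Sigma_n$, and correcting $\widetilde{w}_n$ by an element $u\in J^n/J^{n+1}$ so that the $w_n$ form a compatible sequence (your observation that $\widetilde{w}_n u\in J^{n+1}$, so that $(1+\widetilde{w}_n)(1+u)=1+w_{n+1}$, is exactly what makes the induction close). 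The one step you compress is the claim that two splittings of $A/J^{n+1}\to A/J$ agreeing modulo $J^n$ are conjugate by $1+u$ with $u\in J^n/J^{n+1}$; this is the assertion that every derivation of $A/J$ into the bimodule $J^n/J^{n+1}$ is inner, which holds because $A/J\otimes (A/J)^{op}$ is semisimple over the perfect field $k$ (the same fact used in Lemma \ref{Lemma J to J by J2 splits}), and since $(J^n)^2\subseteq J^{n+1}$ the inner derivation does integrate to the required conjugation. Your route buys independence from \cite{Curtis} at the cost of length; the paper's citation buys brevity. Both are sound.
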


\begin{proof}
In light of results above, this follows directly from \cite[Theorem 1]{Curtis}.  The uniqueness conditions in \cite{Curtis} are equivalent to ours.
\end{proof}

A splitting of $A\onto A/J$ allows us to treat $A/J$ as a subalgebra $\Sigma$ of $A$ (which depends on the splitting).  In this way, we can regard any $A$-(bi)module as a (bi)module for the semisimple algebra $\Sigma$ by restricting coefficients.  In particular, the pseudocompact module $J^n$ is a $\Sigma$-bimodule for each $n$.

\begin{lemma}\label{Lemma J to J by J2 splits}
Let $A = \Sigma\oplus J$ be a pseudocompact algebra such that $A/J^2$ has finite dimension, decomposed as in Proposition \ref{WedderburnMalcev}.  The $\Sigma$-bimodule homomorphism $J\onto J/J^2$ splits.
\end{lemma}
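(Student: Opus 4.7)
The map in question fits into a short exact sequence of $\Sigma$-bimodules
\[
0 \longrightarrow J^2 \longrightarrow J \stackrel{\pi}{\longrightarrow} J/J^2 \longrightarrow 0,
\]
and $\Sigma$-bimodules are the same thing as left modules over the enveloping algebra $\Sigma^e := \Sigma \otimes_k \Sigma^{\mathrm{op}}$.  My plan is to produce a $\Sigma$-bimodule section of $\pi$ by arguing that $J/J^2$ is projective as a $\Sigma^e$-module, i.e.\ that every surjection of $\Sigma^e$-modules onto it splits.

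First I would record the finite-dimensional and semisimple nature of $\Sigma$: by Proposition \ref{WedderburnMalcev} we have $\Sigma \iso A/J$ as $k$-algebras, so $\Sigma$ is a quotient of the finite dimensional algebra $A/J^2$ and is semisimple as the Wedderburn complement of $J$.  The crucial input is then that $\Sigma^e$ is itself semisimple.  This is the one point at which the standing hypothesis that $k$ be perfect is used: a finite dimensional semisimple algebra over a perfect field is separable, and separability of $\Sigma$ is by definition the assertion that $\Sigma^e$ is semisimple.

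Once $\Sigma^e$ is known to be semisimple, every $\Sigma^e$-module, in particular the finite dimensional module $J/J^2$, is projective.  Hence the surjection of $\Sigma^e$-modules $\pi$ admits a section $s: J/J^2 \to J$, which by the equivalence of categories is a $\Sigma$-bimodule homomorphism.  Continuity is automatic: since $A/J^2$ is finite dimensional, $J^2$ is open in $J$, so the quotient $J/J^2$ is a finite dimensional discrete $k$-vector space, and any $k$-linear map out of it is continuous.  Thus $s$ is a continuous $\Sigma$-bimodule splitting in the pseudocompact category.

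I expect the only real obstacle to be the step promoting semisimplicity of $\Sigma$ to semisimplicity of $\Sigma^e$, which is exactly where the perfectness of $k$ is needed; the remaining ingredients (finite dimensionality of $\Sigma$ and $J/J^2$, discreteness of $J/J^2$, and equivalence of $\Sigma$-bimodules with $\Sigma^e$-modules) are essentially bookkeeping.
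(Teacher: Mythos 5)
Your proposal is correct and follows essentially the same route as the paper: identify $\Sigma$-bimodules with modules over $\Sigma\otimes_k\Sigma^{\mathrm{op}}$, use perfectness of $k$ to conclude this enveloping algebra is semisimple, and deduce that $J/J^2$ is projective so the surjection splits. Your additional remark that continuity of the section is automatic (since $J/J^2$ is finite dimensional and discrete) is a useful clarification of why the splitting lives in the pseudocompact category, a point the paper passes over silently.
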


\begin{proof}
Recall that an $A/J$-bimodule is the same thing as a left $A/J\otimes (A/J)^{op}$-module.  But $k$ is perfect, so by \cite[Theorem 6.4]{Lang} this algebra is semisimple.  In particular, the bimodule $J/J^2$ is projective in the pseudocompact category and the map $J\onto J/J^2$ splits. 
\end{proof}

The following definition, which is fundamental to us, makes use of the \emph{completed tensor product}, for which see \cite[\S 7.5]{Gabriel2}.  Because it causes no further complications, we state the definitions in greater generality than we require, but note that in this article we only take tensor products of finite dimensional modules, and so the completed tensor product coincides with the normal tensor product.  
   
 \begin{defn}[Completed tensor algebra]
Let $\Sigma$ be a pseudocompact algebra and $V$ a pseudocompact $\Sigma$-bimodule.  The \emph{completed tensor algebra} $T\db{\Sigma,V}$ is defined to be
\[T\db{\Sigma,V} = \prod_{n=0}^{\infty}V^{\ctens_n},\]
where $V^{\ctens_0} = \Sigma, V^{\ctens_1} = V$ and $V^{\ctens_n} = \underbrace{V\ctens_{\Sigma} V\ctens_{\Sigma}\hdots\ctens_{\Sigma}V}_{n-\mbox{times}}$ for $n>1$.

Multiplication is given in the obvious way: the product of the pure tensors $v_1\ctens\hdots\ctens v_m\in V^{\ctens_m}$ and $w_1\ctens\hdots\ctens w_n\in V^{\ctens_n}$ is 
\[v_1\ctens\hdots \ctens v_m\ctens w_1\ctens\hdots\ctens w_n\in V^{\ctens_{m+n}}.\]
\end{defn}

Observe that the completed tensor algebra is again a pseudocompact algebra \cite[\S 7.5]{Gabriel2}. The ideals $T\db{\Sigma,V}_{\geqslant s} = \prod_{n\geqslant s}V^{\ctens_n}$ are clearly closed in $T\db{\Sigma,V}$.
Furthermore, $T\db{\Sigma,V} = \invlim_{s\in \N}T\db{\Sigma,V}/T\db{\Sigma,V}_{\geqslant s}$.

\begin{example}
\begin{enumerate}
\item Suppose that $\Sigma,V$ are finite dimensional and that for some $n\in \N$ we have $V^{\ctens_n}=0$.  Then $T\db{\Sigma,V}$ is finite dimensional and coincides with the classical tensor algebra $T(\Sigma,V)$ (for which, see for instance \cite[\S III.1]{ARS}).

\item Let $\Sigma = k$ and let $V=k$ treated as a  $\Sigma$-bimodule in the obvious way.  Then $T(\Sigma,V)$ is isomorphic to the polynomial ring in one variable.  On the other hand, the corresponding completed tensor algebra is isomorphic to the ring $T\db{\Sigma,V} \iso k\db{x}$ of formal power series in the variable $x$.
\end{enumerate}
\end{example}

\begin{lemma}\label{Lemma radical of CPA what we expect}
Let $\Sigma$ be a finite dimensional commutative semisimple $k$-algebra and $V$ a finite dimensional $\Sigma$-bimodule.  Then
\[J\left(T\db{\Sigma,V}\right) = \prod_{n=1}^{\infty}V^{\ctens_n}.\]
\end{lemma}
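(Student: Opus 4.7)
The plan is to realize $B := T\db{\Sigma, V}$ as an inverse limit of finite dimensional quotients and invoke Lemma \ref{radical limit of radicals}. Writing $B_{\geqslant s} = \prod_{n \geqslant s} V^{\ctens_n}$ as in the text, one already has $B = \invlim_s B / B_{\geqslant s}$; and because $\Sigma$ and $V$ are finite dimensional, each $V^{\ctens_n}$ is finite dimensional, so each quotient $A_s := B/B_{\geqslant s}$ is a finite dimensional algebra (concretely, $A_s \cong \Sigma \oplus V \oplus \cdots \oplus V^{\ctens_{s-1}}$ as a $\Sigma$-bimodule).

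The first step is to compute $J(A_s)$. Let $I_s \subseteq A_s$ be the two-sided ideal corresponding to $V \oplus V^{\ctens_2} \oplus \cdots \oplus V^{\ctens_{s-1}}$. Two observations suffice. First, $I_s$ is nilpotent: its $s$-th power is $0$ by the grading, so $I_s \subseteq J(A_s)$ since nilpotent ideals always lie in the Jacobson radical. Second, $A_s / I_s \cong \Sigma$ is semisimple, so by Corollary \ref{radical surjective} applied to the surjection $A_s \twoheadrightarrow \Sigma$ one has $J(A_s) \subseteq I_s$. Hence $J(A_s) = I_s$.

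The transition maps $A_{s'} \twoheadrightarrow A_s$ for $s' \geqslant s$ evidently restrict to surjections $I_{s'} \twoheadrightarrow I_s$, so Lemma \ref{radical limit of radicals} gives
\[
  J(B) \;=\; \invlim_s J(A_s) \;=\; \invlim_s I_s.
\]
Finally, the subspace $\prod_{n \geqslant 1} V^{\ctens_n}$ is closed in $B$ and its intersection with $B_{\geqslant s}$ is $B_{\geqslant s}$ itself for every $s \geqslant 1$; thus this product, equipped with the subspace topology, is canonically identified with $\invlim_s I_s$, and this completes the identification $J(B) = \prod_{n \geqslant 1} V^{\ctens_n}$.

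I do not anticipate a serious obstacle: the heart of the argument is the well-known computation of the radical of a truncated tensor algebra, and the rest is routine application of Lemma \ref{radical limit of radicals}. The only point requiring a little care is the book-keeping that shows $\invlim_s I_s$ really is the product $\prod_{n \geqslant 1} V^{\ctens_n}$ and not merely an abstract inverse limit, but this is immediate from the compatibility of the gradings with the quotient maps.
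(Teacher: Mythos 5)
Your proof is correct, but it takes a genuinely different route from the paper's. You truncate: realizing $T\db{\Sigma,V}$ as $\invlim_s A_s$ with $A_s = T\db{\Sigma,V}/T\db{\Sigma,V}_{\geqslant s}$ finite dimensional, you compute $J(A_s)$ as the positive-degree part $I_s$ (nilpotent ideal below, semisimple quotient $\Sigma$ above), and then pass to the limit via Lemma \ref{radical limit of radicals}, identifying $\invlim_s I_s$ with $\prod_{n\geqslant 1}V^{\ctens_n}$. The paper instead works directly inside $T\db{\Sigma,V}$: writing $M=\prod_{n\geqslant 1}V^{\ctens_n}$, it gets $J\subseteq M$ by observing that $I+M$ is a maximal left ideal for each maximal left ideal $I$ of the semisimple algebra $\Sigma$, and gets $M\subseteq J$ by showing $1+m$ is invertible for $m\in M$, via evaluation of power series using completeness of $T\db{\Sigma,V}$ with respect to $M$ (a Bourbaki citation). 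Your version trades that topological invertibility argument for an elementary finite dimensional radical computation plus the inverse-limit machinery already set up in Section \ref{section prelims}; the paper's version is more self-contained in $T\db{\Sigma,V}$ itself. The only point worth making explicit in your argument is that Lemma \ref{radical limit of radicals} is being applied to the particular presentation by the ideals $T\db{\Sigma,V}_{\geqslant s}$, so one should note that these form a base of open neighbourhoods of $0$ (which they do, the topology being the product topology with finite dimensional discrete factors), ensuring the cofinality step in that lemma's proof goes through.
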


\begin{proof}
Denote by $M$ the ideal $\prod_{n=1}^{\infty}V^{\ctens_n}$ and by $J$ the ideal $J\left(T\db{\Sigma,V}\right)$.  If $I$ is a maximal left ideal of $\Sigma$ then $I+M$ is a maximal left ideal of $T\db{\Sigma,V}$ and so, since $\Sigma$ is semisimple, $J\subseteq M$.  On the other hand, given $m\in M$, evaluation at $m$ yields by \cite[Chapter 4, \S 4, Proposition 4]{bourbaki} a ring homomorphism from the power series ring $\Sigma\db{x}$ to $T\db{\Sigma,V}$, because $T\db{\Sigma,V}$ is complete with respect to $M$.  It follows that $1+m$ is invertible, so that $m\in J$.
\end{proof}

The completed tensor algebra is given by the following universal property:

\begin{lemma}\label{Lemma UP tensor algebra}
Let $\Sigma$ be a pseudocompact algebra, $V$ a pseudocompact $\Sigma$-bimodule, and $A$ a pseudocompact algebra.  Given a continuous algebra homomorphism
\[\alpha_0:\Sigma\to A\]
and a continuous $\Sigma$-bimodule homomorphism
\[\alpha_1:V\to A\]
with $A$ treated as a $\Sigma$-bimodule via $\alpha_0$, there exists a unique continuous algebra homomorphism
\[\alpha:T\db{\Sigma,V}\to A\]
such that $\alpha|_{\Sigma} = \alpha_0$ and $\alpha|_{V} = \alpha_1$. 
\end{lemma}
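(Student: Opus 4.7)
The plan is to define $\alpha$ first on each homogeneous component $V^{\ctens_n}$, assemble these into an algebra map on the dense subalgebra $\bigoplus_n V^{\ctens_n}$, and then extend to the completion by continuity, using the presentation of $T\db{\Sigma,V}$ as an inverse limit of its quotients by the $T\db{\Sigma,V}_{\geqslant s}$.

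For the first step, I would define $\alpha_n \colon V^{\ctens_n} \to A$ on pure tensors by
\[ \alpha_n(v_1 \otimes \cdots \otimes v_n) = \alpha_1(v_1) \cdots \alpha_1(v_n), \]
with $\alpha_0$ equal to the given $\alpha_0$ on $V^{\ctens_0} = \Sigma$ and $\alpha_1$ equal to the given $\alpha_1$ on $V^{\ctens_1} = V$. Regarding $A$ as a $\Sigma$-bimodule via $\alpha_0$, the hypothesis that $\alpha_1$ is a continuous $\Sigma$-bimodule map means the induced multilinear map $V \times \cdots \times V \to A$ is continuous and $\Sigma$-balanced, and so by the defining property of the completed tensor product (\cite[\S 7.5]{Gabriel2}) it factors through $V^{\ctens_n}$ as a continuous $\Sigma$-bimodule homomorphism $\alpha_n$. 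The check that the resulting collection defines an algebra homomorphism on the algebraic direct sum reduces on pure tensors to associativity of multiplication in $A$.

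The second step is passage to the completion. Writing $A = \invlim_I A/I$ over open ideals $I$ and using $T\db{\Sigma,V} = \invlim_s T\db{\Sigma,V}/T\db{\Sigma,V}_{\geqslant s}$, I would show that for each open ideal $I\lhd A$ there exists $s = s(I)$ so that $\alpha$ carries $T\db{\Sigma,V}_{\geqslant s}$ into $I$. This factorizes $\alpha$ through the finite-dimensional quotient $T\db{\Sigma,V}/T\db{\Sigma,V}_{\geqslant s}$ followed by $A \onto A/I$, and the family of such factorizations is clearly compatible as $I$ and $s$ vary, so taking the inverse limit yields the desired continuous $\alpha$. I expect this factorization to be the main obstacle: it amounts to showing that iterated products $\alpha_1(v_1)\cdots\alpha_1(v_s)$ of sufficient length lie in any prescribed open ideal of $A$, which requires a careful matching of the pseudocompact topology on $A$ with the product topology on $T\db{\Sigma,V}$.

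Uniqueness is a formality: any continuous algebra map agreeing with $\alpha_0$ and $\alpha_1$ on $\Sigma$ and $V$ must, by multiplicativity, coincide with the formula above on every pure tensor, hence on the dense subalgebra $\bigoplus_n V^{\ctens_n}$, and hence everywhere by continuity.
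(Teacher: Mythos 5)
Your construction is essentially the paper's: define $\alpha_n$ on $V^{\ctens_n}$ by multiplying out $\alpha_1$, sum over $n$, and pass to the inverse limit using $T\db{\Sigma,V}=\invlim_s T\db{\Sigma,V}/T\db{\Sigma,V}_{\geqslant s}$, with uniqueness following from density of $\bigoplus_n V^{\ctens_n}$ and continuity. The gap is exactly the step you single out as ``the main obstacle'' and then leave open: that for every open ideal $I$ of $A$ there is an $s$ with $\alpha(T\db{\Sigma,V}_{\geqslant s})\subseteq I$. This is not a technical matter of ``matching topologies''; it is false under the stated hypotheses. Take $\Sigma=V=A=k$ with $\alpha_0=\alpha_1=\id$ (a perfectly good continuous bimodule map). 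Then $T\db{\Sigma,V}\iso k\db{x}$ and the desired $\alpha$ would send $x\mapsto 1$; but the kernel of any continuous algebra homomorphism $k\db{x}\to k$ is open (as $k$ is discrete), hence contains $T\db{\Sigma,V}_{\geqslant s}$ for some $s$, hence $x^s$, so the image of $x$ is nilpotent in the field $k$ and therefore $0$. Equivalently, the sum $\sum_n\alpha_n(x_n)$ you need for a general element $(x_n)$ of the product simply does not converge in $A$.

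What makes the lemma work in every application in the paper is an implicit additional hypothesis: $\alpha_1(V)\subseteq J(A)$ (or at least $\alpha_1(V)$ lies in a closed ideal with respect to which $A$ is complete). Then $\alpha_n(V^{\ctens_n})\subseteq J^n(A)$, and since $A=\invlim_n A/J^n$ (Proposition \ref{prop complete wrt J}, valid when $A/J^2$ is finite dimensional), the composite $T\db{\Sigma,V}\to A\to A/J^n$ kills $T\db{\Sigma,V}_{\geqslant n}$, giving both convergence and the factorization through finite-dimensional quotients. To complete your proof you must either impose such a hypothesis and verify the containment $\alpha_n(V^{\ctens_n})\subseteq J^n(A)$, or supply some other reason why long products $\alpha_1(v_1)\cdots\alpha_1(v_s)$ eventually fall into any prescribed open ideal. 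For what it is worth, the paper's own proof asserts the factorization ``through some quotient $T\db{\Sigma,V}/T\db{\Sigma,V}_{\geqslant m}$'' without argument, so you have put your finger on precisely the point that needs repair; but as written your argument, like the paper's, is incomplete there.
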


\begin{proof}
Just as in the proof of the abstract version of this result \cite[Lemma III.1.2]{ARS}, we obtain from $\alpha_1$ continuous $\Sigma$-bimodule homomorphisms
\[\alpha_n:V^{\ctens_n}\to A\]
in the obvious way for each $n>1$.  By summing these maps, we obtain continuous $\Sigma$-bimodule homomorphisms $\alpha_{\leqslant m}:\prod_{n=0}^mV^{\ctens_n}\to A$, which together yield a map of inverse systems of $\Sigma$-bimodules, and hence a continuous $\Sigma$-bimodule homomorphism
\[\alpha:T\db{\Sigma,V} = \invlim_{m}\prod_{n=0}^mV^{\ctens_n}\to A.\]
We need only check that $\alpha$ is a homomorphism of algebras.  Write $A$ as the inverse limit of finite dimensional quotients $A_i$.  The induced map $T\db{\Sigma,V}\to A_i$ factors through some quotient $T\db{\Sigma,V}/T\db{\Sigma,V}_{\geqslant m}$, on which it is easily seen to be an algebra homomorphism.  Thus $\alpha$ is an inverse limit of algebra homomorphisms, and hence an algebra homomorphism.
\end{proof}

\section{Categories}\label{section categories}

\subsection*{Quivers and Vquivers}

We define the categories that will interest us.

\begin{defn}[eg. {\cite[\S 3.1]{ARS}}]
A \emph{finite quiver} $Q$ is a directed graph $(Q_0,Q_1)$ consisisting of a finite set $Q_0$ of vertices and a finite set $Q_1$ of arrows.  Given an arrow $a \in Q_1$, we denote by $\mathrm{source}(a)\in Q_0$ the source of $a$ and by $\mathrm{target}(a)\in Q_0$ its target.
\end{defn}

We can thus visualize an arrow in $Q$ as follows:
\[\xymatrix{
\mathrm{source}(a) \ar[r]^{a} & \mathrm{target}(a)}\]
Note that we permit directed cycles and loops.  A quiver having neither loops nor cycles is \emph{acyclic}.  A simple example of an acyclic quiver $Q$ is
\[\xymatrix{
1  \ar@/^/[r]^{a} \ar@/_/[r]_{b} & 2\ar[r]^{c} & 3}\]

\begin{defn}
A \emph{map of quivers} $\rho:(Q_0,Q_1)\to (R_0,R_1)$ from the finite quiver $Q = (Q_0,Q_1)$ to the finite quiver $(R_0,R_1)$ is an injective map $\rho_0:Q_0\to R_0$ together with a map $\rho_1:Q_1\to R_1$ such that $\rho(\mathrm{source}(a)) = \mathrm{source}(\rho(a))$ and  $\rho(\mathrm{target}(a)) = \mathrm{target}(\rho(a))$ for every $a\in Q_1$.  The map $\rho$ is said to be \emph{injective} if $\rho_1$ is injective.
\end{defn}

An injective map of quivers could be thought of as the inclusion of a smaller quiver into a larger quiver.

\begin{defn}
The category $\cat{Quiv}$ has objects finite quivers and morphisms maps of quivers.  The subcategory of $\cat{Quiv}$ whose morphisms are injective quiver maps will be denoted $\cat{IQuiv}$.  
\end{defn}

For our purposes it is important to consider not a finite set of arrows between two vertices of a quiver, but instead a finite dimensional vector space.  A similar approach is taken in \cite{DWZ}.  We call such objects Vquivers.  Recall that we have fixed a perfect field $k$.

\begin{defn}\label{def Vquiver}
A (pointed) \emph{finite Vquiver} $VQ = (VQ_0^*, VQ_{e,f})$ consists of a finite set of vertices $VQ_0^* = \{*\}\cup VQ_0$ and, for each pair $e,f\in VQ_0^*$ a finite dimensional $k$-vector space $VQ_{e,f}$, subject to the condition that $VQ_{*,e} = VQ_{e,*} = 0$ for each vertex $e$.
\end{defn}

We call the vertex $*$ the \emph{point}.  The role of the point is to simplify the definition of maps of Vquivers.  One obtains a Vquiver from a quiver $Q$ by simply adding the point and replacing the set $Q_{e,f}$ of arrows from the vertex $e$ to the vertex $f$ by the vector space having basis $Q_{e,f}$.  The Vquiver corresponding to the quiver above is 
\[\xymatrix{
1  \ar[rr]^{k^2 = \langle a,b \rangle} && 2\ar[rr]^{k = \langle c \rangle} && 3 & {*} }\]
with all unmarked vector spaces being 0.  We hope that the reader will agree that Vquivers are no more complicated than normal quivers.  The condition of acyclicity corresponds to the demand that for any sequence $e_1,\hdots,e_n$ of vertices ($n\geqslant 1$), at least one of the $n$ vector spaces $VQ_{e_1,e_2},VQ_{e_2,e_3},\hdots,VQ_{e_n,e_1}$ has dimension 0.  We call such a Vquiver \emph{acyclic}.

\medskip

The obvious map of Vquivers $VQ\to VR$ induced by the map of quivers $Q\to R$ does not correspond well to what happens at the level of algebras (it corresponds much better to what happens at the level of \emph{co}algebras -- more on this will be published at a later time).

\begin{defn}
A \emph{map of Vquivers} $\rho:(VR_0^*,VR_{e,f}) \to (VQ_0^*,VQ_{e',f'})$ from the finite Vquiver $(VR_0^*,VR_{e,f})$ to the finite Vquiver $(VQ_0^*,VQ_{e',f'})$ consists of 
\begin{itemize}
\item a pointed map $\rho_0:VR_0^*\to VQ_0^*$ (that is, such that $\rho_0(*)=*$) that restricts to a bijection from the elements of $VQ_0$ not mapping to $*$ onto $VR_0$.
\item a linear map $\rho_{e,f}:VR_{e,f}\to VQ_{\rho_0(e),\rho_0(f)}$ for each pair of vertices $e,f\in VR_0^*$.
\end{itemize}
We say that $\rho$ is \emph{surjective} if every $\rho_{e,f}$ is surjective.
\end{defn}

We note in passing that the map $\rho_0$ may also be described as a surjective map of vector spaces over \ospeech the field with one element\cspeech, for which see for instance \cite[Definition 2.1]{szczesny}.

\begin{example}
Let $VR$ be the Vquiver
\[\xymatrix{
1  \ar[rr]^{k^2} && 2\ar[rr]^{k} && 3 & {*} }\]
and let $VQ$ be the Vquiver
\[\xymatrix{
4\ar[rr]^{k^2} && 5 & {*} }\]

The map on vertices $1\mapsto 4, 2\mapsto 5, 3\mapsto *$ can be extended to a surjective map of Vquivers via any surjective linear map 
\[VR_{1,2} = k^2\to k^2 = VQ_{4,5}.\]
The vertex map $1\mapsto *, 2\mapsto 4, 3\mapsto 5$ also yields maps of Vquivers, but they are not surjective.  Any other choice of vertex map $h\mapsto 4, i\mapsto 5, j\mapsto *$ yields a \ospeech zero map\cspeech of Vquivers.
\end{example}

\begin{defn}
The category $\cat{VQuiv}$ has objects finite Vquivers and morphisms maps of Vquivers.  The subcategory of $\cat{VQuiv}$ whose morphisms are surjective Vquiver maps will be denoted $\cat{SVQuiv}$.  
\end{defn}

\subsection*{Algebras}

A pseudocompact algebra $A$ is said to be \emph{basic} if $A/J(A)$ is isomorphic to a product of $k$-division algebras.  It is \emph{pointed} if $A/J(A)$ is isomorphic to a product of copies of the field $k$.  Morita's theorem for finite dimensional algebras has been extended to pseudocompact algebras (see \cite{Gabriel_thesis} or \cite[Proposition 5.6]{SimsonCoalg}).  Thus, for any pseudocompact algebra $A$, there is a basic pseudocompact algebra $B$ such that the categories of pseudocompact $A$-modules and of pseudocompact $B$-modules are equivalent.  That is to say, from the perspective of the representation theory of algebras, there is no loss in generality in supposing that $A$ is basic.  Furthermore, if $k$ is algebraically closed, then basic and pointed algebras coincide.  This justifies the common restriction in representation theory to the study of pointed algebras, which we will adopt here.

Observe that there is an unfortunate coincidence of standard terminology: the \ospeech pointed\cspeech of pointed sets (as in Definition \ref{def Vquiver}) has nothing to do with the \ospeech pointed\cspeech of pointed algebras.  We do not anticipate any confusion.

\medskip

The following two lemmas explain how we must define our category of algebras.  The inclusion of lower triangular $2\times 2$ matrices into $M_2(k)$ shows that a non-surjective algebra homomorphism $A\to B$ need not send $J(A)$ into $J(B)$.  But problems like this do not arise when $B$ is pointed (indeed, the following proof works when $B$ is basic):

\begin{lemma}\label{Lemma alg hom sends J to J}
Let $A,B$ be pointed pseudocompact algebras whose radical quotients are finite dimensional and let $\alpha:A\to B$ be a unital algebra homomorphism.  Then $\alpha(J(A))\subseteq J(B)$. 
\end{lemma}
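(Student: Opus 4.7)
The plan is to reduce everything to the characterisation of $J$ as an intersection of maximal open two-sided ideals, and then use the pointedness of $B$ to peel off $1$-dimensional quotients on which the argument is transparent.

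First I would recall from the discussion in Section \ref{section prelims} that $J(B)$ equals the intersection of the maximal open two-sided ideals $M$ of $B$. It therefore suffices to fix an arbitrary such $M$ and show $\alpha(J(A)) \subseteq M$. Because $B$ is pointed with $B/J(B)$ finite dimensional, we have $B/J(B) \iso k\times\cdots\times k$, and so $B/M$, which is a simple quotient of $B/J(B)$, must be isomorphic to $k$.

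Next I would consider the composition
\[\beta_M\colon A\xrightarrow{\alpha} B\twoheadrightarrow B/M\iso k.\]
Since $M$ is open and $k$ carries the discrete topology, $\beta_M$ is a continuous unital $k$-algebra homomorphism into a discrete field; its image contains $1$, so it is surjective. Hence $\ker(\beta_M)$ is a two-sided ideal of codimension $1$ in $A$, which is open as the preimage of the open set $\{0\}$, and maximal by its codimension. By the characterisation of $J(A)$ as the intersection of all maximal open two-sided ideals, $J(A)\subseteq \ker(\beta_M)$, i.e.\ $\alpha(J(A))\subseteq M$. Intersecting over all such $M$ yields $\alpha(J(A))\subseteq J(B)$.

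There is no serious obstacle here; the content of the lemma is precisely that pointedness of $B$ makes every simple ``coordinate'' of $B/J(B)$ look like $k$, so that each accompanying continuous homomorphism out of $A$ has an open maximal two-sided kernel, which therefore swallows $J(A)$. The remark preceding the lemma about the inclusion of lower-triangular matrices into $M_2(k)$ shows that pointedness of the target is essential: once $B/J(B)$ is allowed to have a factor like $M_2(k)$, the simple quotients need not be one-dimensional and a codimension-$1$ cokernel argument of this form is no longer available.
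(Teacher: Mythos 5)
Your proof is correct, but it takes a genuinely different route from the paper's. The paper first treats the finite-dimensional case via the Wedderburn--Malcev decomposition $B=\Sigma\oplus J(B)$: since $\Sigma\iso k\times\cdots\times k$ has no nonzero nilpotents, an element of $B$ lies in $J(B)$ if and only if it is nilpotent, and algebra homomorphisms preserve nilpotency; the general pseudocompact case is then deduced by the inverse-limit argument of Corollary \ref{radical surjective}. You instead work directly with the characterization of the radical as the intersection of the maximal open two-sided ideals, using pointedness to force each simple quotient $B/M$ to be $k$ and pulling $M$ back to an open codimension-one (hence maximal) two-sided ideal of $A$ that must contain $J(A)$. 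This handles the pseudocompact case in one stroke, with no reduction to finite dimension and no appeal to Wedderburn--Malcev, and it isolates exactly where pointedness of the target enters. Two small remarks. First, like the paper's argument, yours implicitly uses that $\alpha$ is continuous: you need $\ker(\beta_M)$ to be \emph{open}, since $J(A)$ is only identified with the intersection of the maximal \emph{open} two-sided ideals; this is harmless, as morphisms in $\cat{PAlg}$ are continuous by definition, but it is worth saying explicitly. Second, your method in fact matches the paper's parenthetical claim that the result holds for basic $B$: if $B/M$ is a division algebra $D$, the image of $A$ in $D$ is a finite-dimensional subalgebra of a division algebra, hence itself a division algebra, so $\ker(\beta_M)$ is still a maximal open two-sided ideal; what genuinely breaks the argument (and the lemma) is a matrix factor such as $M_2(k)$, exactly as your closing comment and the paper's lower-triangular example indicate.
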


\begin{proof}
Suppose first that $A$ and $B$ are finite dimensional.  Writing $B = \Sigma \oplus J(B)$ as in Proposition \ref{WedderburnMalcev}, $\Sigma$ is a product of copies of $k$, and hence the only nilpotent element of $\Sigma$ is 0.  It follows that an element of $B$ is in $J(B)$ if, and only if, it is nilpotent.  Given $w\in J(A)$, $\alpha(w)$ is nilpotent, so in $J(B)$.  The proof for pseudocompact algebras now mimics that of Corollary \ref{radical surjective}.
\end{proof}

Recall that a primitive idempotent of an algebra $A$ is an idempotent that cannot be written as the sum of two  non-zero orthogonal idempotents.  A homomorphism of algebras $\alpha$ is said to respect primitive idempotents if $\alpha(e)$ is a primitive idempotent whenever $e$ is a primitive idempotent.

\begin{lemma}
Let $\alpha:A\to B$ be a unital homomorphism of algebras.  Then $\alpha$ respects primitive idempotents if, and only if, the induced homomorphism $A/J(A) \to B/J(B)$ is surjective.
\end{lemma}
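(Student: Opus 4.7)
Since $A$ and $B$ are pointed with finite-dimensional radical quotients, $A/J(A)\cong k^m$ and $B/J(B)\cong k^n$ are products of copies of $k$, and by Lemma~\ref{Lemma alg hom sends J to J} the induced map $\bar\alpha:A/J(A)\to B/J(B)$ is well defined. The primitive idempotents of $k^n$ are precisely its standard basis vectors; I will use the elementary fact (algebra--geometry duality between $k^m$ and $\{1,\ldots,m\}$) that a unital algebra map $\phi:k^m\to k^n$ is surjective if and only if every primitive idempotent of $k^m$ maps under $\phi$ either to zero or to a single primitive idempotent of $k^n$. The other main tool is idempotent lifting, available by the Wedderburn--Malcev theorem (Proposition~\ref{WedderburnMalcev}): any idempotent $f\in B$ with $\bar f = 0$ must vanish (since $f\in J(B)$ would force $1-f$ invertible against $(1-f)f=0$), and lifting preserves primitivity.

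For the forward direction, lift the standard basis of $A/J(A)$ to a decomposition $1_A=e_1+\cdots+e_m$ into orthogonal primitive idempotents of $A$. Then $1_B=\alpha(e_1)+\cdots+\alpha(e_m)$ is a sum of orthogonal primitive (hence nonzero) idempotents of $B$, and reducing modulo $J(B)$ yields a sum $1=\overline{\alpha(e_1)}+\cdots+\overline{\alpha(e_m)}$ of orthogonal, nonzero, primitive idempotents in $k^n$. Such a family must coincide with the complete set of primitive idempotents of $k^n$, so every primitive idempotent of $B/J(B)$ lies in the image of $\bar\alpha$, and $\bar\alpha$ is surjective.

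For the converse, if $\bar\alpha$ is surjective then, by the preliminary fact, for each primitive idempotent $\bar e$ of $A/J(A)$, $\bar\alpha(\bar e)$ is zero or a single primitive idempotent of $B/J(B)$. Given a primitive idempotent $e\in A$, its reduction $\bar e$ is primitive in $A/J(A)$; applying idempotent lifting to $\overline{\alpha(e)}=\bar\alpha(\bar e)$ then shows $\alpha(e)$ is (zero or) primitive in $B$, as required.

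The main technical ingredient, and the only real obstacle, is the faithful correspondence between idempotents in $A$ (resp.\ $B$) and idempotents in the semisimple quotient $A/J(A)$ (resp.\ $B/J(B)$); once this is in place the proof reduces to the elementary combinatorics of unital algebra maps between products of copies of $k$.
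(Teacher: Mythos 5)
Your proof is correct and follows essentially the same route as the paper's: decompose $1_A$ into a complete set of primitive orthogonal idempotents, push it forward to an orthogonal decomposition of $1_B$, and read off surjectivity of the induced map in the semisimple quotient $B/J(B)\cong k^n$. You merely make explicit two ingredients the paper leaves implicit --- the idempotent-lifting facts (an idempotent in $J(B)$ is zero; primitivity passes between $B$ and $B/J(B)$) and the combinatorial description of unital algebra maps $k^m\to k^n$ --- which is a reasonable expansion rather than a different argument.
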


\begin{proof}
The induced map makes sense by Lemma \ref{Lemma alg hom sends J to J}.  Let $\{e_1,\hdots,e_n\}$ be a complete set of primitive orthogonal idempotents for $A$ and suppose that those not mapped to $0$ by $\alpha$ are $e_1,\hdots,e_s$.  Then $\{\alpha(e_1),\hdots,\alpha(e_s)\}$ is on one hand a set of orthogonal idempotents of $B$ summing to $1_B$ and on the other hand a basis of $\alpha(A/J(A))$ in $B/J(B)$.  The result follows.
\end{proof}

\begin{defn}
Denote by \cat{PAlg} the category whose objects are those pointed pseudocompact $k$-algebras $A$ with the property that $A/J^2(A)$ has finite dimension.  Morphisms in this category are continuous unital homomorphisms of algebras such that the induced map between the radical quotients is surjective.  Denote by $\cat{SPAlg}$ the subcategory of $\cat{PAlg}$ having the same objects as $\cat{PAlg}$ and morphisms all surjective unital algebra homomorphisms. 
\end{defn}

By way of examples, we mention that the category $\cat{PAlg}$ contains the non-surjective inclusion of diagonal $2\times 2$ matrices into lower triangular $2\times 2$ matrices, but it does not include the diagonal inclusion of $k$ into $k\times k$, because this map does not respect primitive idempotents.

\medskip

The categories of most interest to us in this discussion are quotients of $\cat{PAlg}$.  Thus we define relations on the morphisms in $\cat{PAlg}$:

\begin{defn}
Consider two objects $A,B$ and two morphisms $\alpha,\beta:A\to B$ in $\cat{PAlg}$.  
\begin{itemize}
\item[] Say that $\alpha \sim_0\beta$ if $(\alpha-\beta)(A)\subseteq J(B)$.
\item[] Say that $\alpha\sim_1\beta$ if $(\alpha-\beta)(A)\subseteq J(B)$ and $(\alpha-\beta)(J(A))\subseteq J^2(B)$.
\end{itemize}
\end{defn}

We regard two morphisms $\alpha,\beta$ with $\alpha\sim_i\beta$ ($i=0,1$) as being \ospeech close\cspeech, in so far as their difference is \ospeech small\cspeech.  The map $\alpha-\beta$ is not an algebra homomorphism, but this does not cause any difficulties.  Observe that if ever $\alpha\sim_1\beta$, then $\alpha\sim_0\beta$, and hence the relation $\sim_1$ is more refined that $\sim_0$.  One could define further relations $\sim_n$ recursively as follows: say that $\alpha\sim_n\beta$ ($n\geqslant 2$) if $\alpha\sim_{n-1}\beta$ and $(\alpha-\beta)(J^n(A))\subseteq J^{n+1}(B)$.  The following explains that by doing so we would get nothing new:   

\begin{lemma}\label{tilde one implies tilde n}
If $\alpha\sim_1\beta$ in $\cat{PAlg}$, then $\alpha\sim_n\beta$ for any $n\in \N$.
\end{lemma}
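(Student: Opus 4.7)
The plan is to proceed by induction on $n$, with the base case $n=1$ being the hypothesis $\alpha\sim_1\beta$ itself. For the inductive step, suppose $\alpha\sim_n\beta$ has been established, so that in particular $(\alpha-\beta)(J^m(A))\subseteq J^{m+1}(B)$ for every $1\leqslant m\leqslant n$. To conclude $\alpha\sim_{n+1}\beta$ it suffices, in view of the recursive definition of $\sim_{n+1}$, to show $(\alpha-\beta)(J^{n+1}(A))\subseteq J^{n+2}(B)$.

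The key observation is that by Lemma \ref{radical facts when J by J2 is fd}, the ideal $J^{n+1}(A) = J(A)\cdot J^n(A)$ is finitely generated, so an arbitrary element $z\in J^{n+1}(A)$ can be written as a genuine finite sum $z=\sum_i x_iy_i$ with $x_i\in J(A)$ and $y_i\in J^n(A)$ -- no passage to a topological closure is needed. For each product I would exploit the identity
\[
\alpha(x_iy_i)-\beta(x_iy_i)=\alpha(x_i)\bigl(\alpha(y_i)-\beta(y_i)\bigr)+\bigl(\alpha(x_i)-\beta(x_i)\bigr)\beta(y_i),
\]
which reduces the problem to analysing two summands. For the first, Lemma \ref{Lemma alg hom sends J to J} gives $\alpha(x_i)\in J(B)$, and the inductive hypothesis applied at level $n$ gives $\alpha(y_i)-\beta(y_i)\in J^{n+1}(B)$, so the product lands in $J(B)\cdot J^{n+1}(B)\subseteq J^{n+2}(B)$. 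For the second, the base case $\alpha\sim_1\beta$ yields $\alpha(x_i)-\beta(x_i)\in J^2(B)$, while a straightforward iteration of Lemma \ref{Lemma alg hom sends J to J} (algebra homomorphisms preserve products, so $\beta(J^n(A))\subseteq J^n(B)$) gives $\beta(y_i)\in J^n(B)$; hence this product lies in $J^2(B)\cdot J^n(B)\subseteq J^{n+2}(B)$. Summing over $i$ yields $(\alpha-\beta)(z)\in J^{n+2}(B)$.

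There is no serious obstacle; the argument is a direct algebraic manipulation using the Leibniz-style identity above. The only point worth emphasising is the role of Lemma \ref{radical facts when J by J2 is fd}: it is precisely the finite generation of $J^{n+1}(A)$, guaranteed by the standing hypothesis that $A/J^2(A)$ be finite dimensional, that lets us write every element of $J^{n+1}(A)$ as a \emph{finite} sum of products and thereby avoid any topological subtleties when distributing $\alpha$ and $\beta$ across the sum.
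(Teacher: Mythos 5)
Your proof is correct and follows essentially the same route as the paper's: the same telescoping identity $(\alpha-\beta)(xy)=\alpha(x)(\alpha(y)-\beta(y))+(\alpha(x)-\beta(x))\beta(y)$ drives the induction in both arguments. You merely make explicit two points the paper leaves implicit -- that a general element of $J^{n+1}(A)$ is a genuine finite sum of products (via the finite generation in Lemma \ref{radical facts when J by J2 is fd}) and that $\alpha,\beta$ carry radical powers into radical powers -- which is a welcome tightening rather than a different proof.
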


\begin{proof}
For any $j\in J(A)$ we have $\alpha(j)-\beta(j)\in J^2(B)$.  We check that $\alpha \sim_2 \beta$ (the rest follows by induction).  So consider $j_1,j_2\in J(A)$ and $j_1j_2\in J^2(A)$.  We have
\begin{align*}
    	(\alpha-\beta)(j_1j_2) & = \alpha(j_1)\alpha(j_2) - \beta(j_1)\beta(j_2) \\
        & = \alpha(j_1)\alpha(j_2)-\alpha(j_1)\beta(j_2) + \alpha(j_1)\beta(j_2) -\beta(j_1)\beta(j_2) \\
		& = \alpha(j_1)(\alpha(j_2)-\beta(j_2))+(\alpha(j_1)-\beta(j_1))\beta(j_2)
    \end{align*}
which is an element of $J(B)J^2(B) + J^2(B)J(B) = J^3(B)$.
\end{proof}

That said, one could indeed define finer relations on $\cat{PAlg}$.  For example, one could say that $\alpha \sim\beta$ when $(\alpha-\beta)(A)\subseteq J^n(B)$ for some fixed $n$ (cf. \cite[Definition 2.5]{DWZ}).  Clearly there are a vast array of relations of this sort to consider and we suggest that cleverly chosen relations may warrant investigation.

In order that $\sim_i$ defines a quotient category, we must check that these relations are congruences in the sense of \cite[\S II.8]{MacLane}.

\begin{prop} The relations $\sim_0, \sim_1$ are congruence relations on $\cat{PAlg}$ and $\cat{SPAlg}$.
\end{prop}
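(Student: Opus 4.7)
The plan is to check the two defining properties of a congruence relation in the sense of \cite[\S II.8]{MacLane}: namely, that each $\sim_i$ is an equivalence relation on every hom-set, and that it is compatible with composition on both sides. Almost everything hinges on Lemma \ref{Lemma alg hom sends J to J}, which ensures that any morphism in $\cat{PAlg}$ sends the Jacobson radical into the Jacobson radical, and (multiplicatively) $J^n$ into $J^n$.

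First I would dispatch the equivalence-relation axioms. Since any morphism in $\cat{PAlg}$ is in particular a $k$-linear map, the difference $\alpha - \beta$ is again $k$-linear, and its image $(\alpha-\beta)(A)$ is a $k$-subspace of $B$. Reflexivity is trivial ($0 \in J(B)$ and $0\in J^2(B)$), symmetry follows from $J(B), J^2(B)$ being closed under additive inverses, and transitivity follows from $(\alpha - \gamma)(a) = (\alpha-\beta)(a) + (\beta-\gamma)(a)$ together with the fact that $J(B)$ and $J^2(B)$ are additive subgroups.

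Next, the compatibility with composition. For left composition, let $\alpha \sim_i \beta : A \to B$ and let $\gamma : B \to C$ be a morphism in $\cat{PAlg}$. Using linearity of $\gamma$,
\[(\gamma\alpha - \gamma\beta)(A) = \gamma\bigl((\alpha-\beta)(A)\bigr) \subseteq \gamma(J(B)) \subseteq J(C),\]
where the last inclusion is Lemma \ref{Lemma alg hom sends J to J}. For $\sim_1$ we additionally need $(\gamma\alpha - \gamma\beta)(J(A)) \subseteq J^2(C)$; this follows because $\gamma$ is a ring homomorphism, so $\gamma(J^2(B)) = \gamma(J(B))\cdot \gamma(J(B)) \subseteq J(C)\cdot J(C) = J^2(C)$. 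For right composition, let $\delta : D \to A$ be any morphism in $\cat{PAlg}$; then
\[(\alpha\delta - \beta\delta)(D) = (\alpha-\beta)(\delta(D)) \subseteq (\alpha-\beta)(A) \subseteq J(B),\]
and for $\sim_1$ the second condition follows from $\delta(J(D)) \subseteq J(A)$ (again by Lemma \ref{Lemma alg hom sends J to J}), so that $(\alpha-\beta)(\delta(J(D))) \subseteq (\alpha-\beta)(J(A)) \subseteq J^2(B)$.

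Finally, since $\cat{SPAlg}$ has the same objects as $\cat{PAlg}$ and its morphisms form a subclass closed under composition, the same verifications restrict without change to $\cat{SPAlg}$. There is really no obstacle here beyond bookkeeping; the single nontrivial input is Lemma \ref{Lemma alg hom sends J to J}, which is precisely the reason that the category $\cat{PAlg}$ was defined to include the condition that morphisms respect primitive idempotents.
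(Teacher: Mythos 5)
Your proof is correct and is essentially the verification the paper has in mind (its own proof merely says ``one checks this as in Lemma \ref{tilde one implies tilde n}, details left to the reader''): the linearity of differences, Lemma \ref{Lemma alg hom sends J to J}, and the multiplicativity of morphisms are exactly the ingredients needed. The only cosmetic slip is the equality $\gamma(J^2(B)) = \gamma(J(B))\cdot\gamma(J(B))$, which should be the containment $\gamma(J^2(B)) \subseteq \gamma(J(B))\cdot\gamma(J(B)) \subseteq J^2(C)$; that is all your argument uses.
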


\begin{proof}
One checks this as in the proof of Lemma \ref{tilde one implies tilde n}.  The details are left to the reader.
\end{proof}

\begin{defn}
The category $\cat{PAlg}_0$ (respectively $\cat{SPAlg}_0$) is defined to be the quotient category $\cat{PAlg}/\sim_0$ (respectively $\cat{SPAlg}/\sim_0$).

The category $\cat{PAlg}_1$ (respectively $\cat{SPAlg}_1$) is defined to be the quotient category $\cat{PAlg}/\sim_1$ (respectively $\cat{SPAlg}/\sim_1$).
\end{defn}

Given a morphism $\alpha$ of $\cat{PAlg}$, we will sometimes denote the corresponding morphism of $\cat{PAlg}_0$ by $[\alpha]_0$ and of $\cat{PAlg}_1$ by $[\alpha]_1$.

The following lemma will be useful in the sequel.  It says that surjective morphisms in $\cat{PAlg}_1$ and $\cat{PAlg}_0$ are epimorphisms.

\begin{lemma} \label{CancellationProp}
Let $\alpha:A\to B$ be a surjective algebra homomorphism and $\beta,\beta':B\to C$ algebra homomorphisms such that $\beta\alpha\sim_1 \beta'\alpha$.  Then 
$\beta \sim_1 \beta'$.
\end{lemma}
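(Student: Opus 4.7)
The proof is essentially a direct diagram chase, using surjectivity of $\alpha$ at two levels: on all of $A$ (given by hypothesis) and on the Jacobson radicals.

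The plan is to verify the two conditions that define $\sim_1$ separately. For the first, take any $b \in B$; since $\alpha$ is surjective, lift it to some $a \in A$ with $\alpha(a) = b$, and compute
\[
(\beta - \beta')(b) = (\beta - \beta')(\alpha(a)) = (\beta\alpha - \beta'\alpha)(a) \in J(C),
\]
where the last containment is the first half of the hypothesis $\beta\alpha \sim_1 \beta'\alpha$. This gives $(\beta - \beta')(B) \subseteq J(C)$.

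For the second condition, we need every element of $J(B)$ to lift to an element of $J(A)$ along $\alpha$. This is exactly Corollary \ref{radical surjective}: since $\alpha$ is a continuous surjective algebra homomorphism, $\alpha(J(A)) = J(B)$. So given $j \in J(B)$ we may pick $x \in J(A)$ with $\alpha(x) = j$, and then
\[
(\beta - \beta')(j) = (\beta\alpha - \beta'\alpha)(x) \in J^2(C),
\]
using the second half of the hypothesis. Thus $(\beta - \beta')(J(B)) \subseteq J^2(C)$, as required.

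There is no genuine obstacle here; the only non-trivial input is the surjectivity of $\alpha$ on the Jacobson radicals, and that has already been established in Corollary \ref{radical surjective}. The lemma is really a cancellation property flowing from the fact that the defining conditions of $\sim_1$ are stated on images, and a surjection is already epi in the category of sets.
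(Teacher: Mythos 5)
Your proof is correct and is essentially the paper's own argument: both use $\alpha(A)=B$ together with $\alpha(J(A))=J(B)$ (Corollary \ref{radical surjective}) to rewrite $(\beta-\beta')(B)$ and $(\beta-\beta')(J(B))$ as images of $(\beta\alpha-\beta'\alpha)$ and then invoke the hypothesis. No differences worth noting.
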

\begin{proof}
Since $\beta\alpha\sim_1 \beta'\alpha$ we have
\[
 	(\beta-\beta')(B) = (\beta-\beta')\alpha(A) = (\beta\alpha-\beta'\alpha)(A)\subseteq J(C),
 \]
 \[
 (\beta-\beta')(J(B)) = (\beta-\beta')(\alpha(J(A)) = (\beta\alpha-\beta'\alpha)(J(A))\subseteq J^2(C).
 \]
\end{proof}

Working in the quotient category $\cat{PAlg}_1$ rather than $\cat{PAlg}$, much of the important information is preserved.  We check, for instance, that the canonical functor $\cat{PAlg}\to \cat{PAlg}_1$ reflects isomorphisms (that is, if a morphism $\alpha$ in $\cat{PAlg}$ is such that $[\alpha]_1$ is an isomorphism, then $\alpha$ itself is an isomorphism).

\begin{prop}
Given $\alpha:A\to A\in \cat{PAlg}$, if $\alpha\sim_1\tn{id}_A$, then $\alpha$ is an isomorphism.  Hence the projection functor $\cat{PAlg}\to \cat{PAlg}_1$ reflects isomorphisms.
\end{prop}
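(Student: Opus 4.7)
The plan is to first reduce the second statement to the first. Suppose $\alpha\colon A\to B$ is a morphism in $\cat{PAlg}$ such that $[\alpha]_1$ is an isomorphism in $\cat{PAlg}_1$, with inverse represented by some $\beta\colon B\to A$ in $\cat{PAlg}$. Then $\beta\alpha\sim_1\id_A$ and $\alpha\beta\sim_1\id_B$; once the first claim is established, both composites are honest isomorphisms in $\cat{PAlg}$, so $\alpha$ has a left and a right inverse and is itself an isomorphism.

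To prove the first claim, suppose $\alpha\colon A\to A$ with $\alpha\sim_1\id_A$. By Lemma \ref{tilde one implies tilde n} we have $\alpha\sim_n\id_A$ for every $n\in\N$, which unwinds to $(\alpha-\id)(J^k(A))\subseteq J^{k+1}(A)$ for every $k\geqslant 0$ (with $J^0=A$). In particular $\alpha(J^n)\subseteq J^n$, so $\alpha$ descends to an algebra endomorphism $\alpha_n\colon A/J^n\to A/J^n$ of the finite dimensional quotient for each $n$. Iterating the containment above gives $(\alpha-\id)^m(A)\subseteq J^m(A)$ by a straightforward induction, and hence the linear map $\phi_n:=\alpha_n-\id$ satisfies $\phi_n^{\,n}=0$ on $A/J^n$. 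Thus $\alpha_n=\id+\phi_n$ with $\phi_n$ nilpotent, so $\alpha_n$ is invertible as a linear map with inverse $\sum_{k=0}^{n-1}(-\phi_n)^k$; because $\alpha_n$ is an algebra homomorphism and is bijective, $\alpha_n^{-1}$ is also an algebra homomorphism.

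Finally, the family $\{\alpha_n^{-1}\}$ is compatible with the canonical projections $A/J^{n+1}\onto A/J^n$, being the inverse of a compatible family of algebra automorphisms of the $A/J^n$. Using $A=\invlim_n A/J^n$ from Proposition \ref{prop complete wrt J}, we obtain a continuous algebra homomorphism $\alpha^{-1}\colon A\to A$ that inverts $\alpha$, proving $\alpha$ is an isomorphism. The only technical point is the nilpotency of $\phi_n$, which is really no obstacle: the iterated refinement $\alpha\sim_n\id$ forces $\alpha-\id$ to raise the radical filtration degree by one at every level, and this is exactly what makes all computations collapse after finitely many steps on each finite dimensional quotient.
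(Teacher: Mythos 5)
Your proof is correct and follows essentially the same route as the paper: both use Lemma \ref{tilde one implies tilde n} to see that $\alpha-\id$ raises the radical filtration, deduce invertibility of the induced maps on the finite dimensional quotients $A/J^n$, and conclude via the completeness $A=\invlim A/J^n$ of Proposition \ref{prop complete wrt J}. The only (cosmetic) difference is that you exhibit $\alpha_n$ as unipotent with an explicit Neumann-series inverse, whereas the paper argues injectivity directly and invokes finite dimensionality for surjectivity; you also spell out the formal reduction of the second claim, which the paper leaves to the reader.
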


\begin{proof}
We check that $\alpha$ is injective.  If $\alpha(x)=0$, then
\[(\tn{id}_A - \alpha)(x) = x,\]
hence $x\in J(A)$.  Repeating, we see that $x\in J^2(A)$.  Continuing in this way and using Lemma \ref{tilde one implies tilde n} it follows that $x\in \bigcap_{n}J^n(A) = 0$.  For each $n\in \N$, note that the induced map $\overline{\alpha}:A/J^n\to A/J^n$ is such that $\overline{\alpha}\sim_1\tn{id}_{A/J^n}$.  But $A/J^n$ is finite dimensional and hence $\overline{\alpha}$ is an isomorphism by the argument above.  Now by Proposition \ref{prop complete wrt J}, $\alpha$ is an isomorphism.  The second claim follows formally.
\end{proof}

\section{Functors}\label{section functors}

\subsection{Functors from \cat{IQuiv}}

\subsubsection*{The functor $V(-)$}

Given a finite quiver $Q = (Q_0,Q_1)$, we define a Vquiver $VQ$ in the obvious way: the vertex set of $VQ$ is $Q_0\cup\{*\}$.  Given vertices $e,f$, the vector space $VQ_{e,f}$ is the space with basis the arrows $e\to f$ in $Q$ (the spaces $VQ_{e,*}, VQ_{*,e}$ are of course $0$).  Given a map $\iota:Q\to R$ in $\cat{IQuiv}$, which we consider as an inclusion for simplicity, we define the map $V(\iota):VR\to VQ$ as follows: send the vertex $e\in VR_0$ to itself if it is contained in $Q$, or to $*$ otherwise.  Given an arrow $a:e\to f$ of $R$ in $VR$, send $a$ to itself if it is an arrow of $VR$, or to $0_{VQ_{e,f}}$ otherwise.  This defines a map on the basis of the arrow spaces, and hence a (surjective) map of Vquivers.  We obtain in this way the contravariant functor $V(-):\cat{IQuiv}\to \cat{SVQuiv}$.

\medskip

\subsubsection*{The functor Completed path algebra}

This is a classical construction.  We observe that it is functorial.  Our main interest is in functors from $\cat{VQuiv}$, and so we leave checks of technical details to the reader.

%[ref for completed paths algebra] 
Given the quiver $Q = (Q_0,Q_1)$, let $CPA(Q)$ (\ospeech completed path algebra of $Q$\cspeech) be the pseudocompact $k$-vector space having basis the discrete set of all finite paths in $Q$.  The product of the paths $a_m\hdots a_1$ and $b_n\hdots b_1$ is given by the concatenation $a_m\hdots a_1 b_n\hdots b_1$ when $\mathrm{target}(b_n) = \mathrm{source}(a_1)$ and 0 otherwise.  This defines the structure of a pseudocompact algebra on $CPA(Q)$.  We have that $CPA(Q)/J^2(CPA(Q))$ has dimension $|Q_0| + |Q_1|<\infty$ and $CPA(Q)/J(CPA(Q))\iso\prod_{Q_0}k$, hence $CPA(Q)$ is indeed an object of $\cat{SPAlg}$.

Let $\iota:Q\to R$ be a morphism in $\cat{IQuiv}$.  Define the map $CPA(\iota): CPA(R)\to CPA(Q)$ on the path basis by sending a path of $CPA(R)$ to itself when it is completely contained in $Q$, and to 0 otherwise.  This yields a surjective map of algebras.  In this way we obtain a contravariant functor $CPA(-):\cat{IQuiv}\to \cat{SPAlg}$.

\medskip

Observe that when the quivers are acyclic, the algebra $CPA(Q) = kQ$ is just the usual path algebra.  But even with acyclic quivers, the obvious inclusion map $kQ\to kR$ induced by $\iota:Q\to R$ is \emph{not} in general an algebra homomorphism, because it does not send $1_{kQ}$ to $1_{kR}$ if $\iota$ is not surjective on vertices.  The inclusion $kQ\to kR$ is instead a morphism of coalgebras.  We suggest that this helps to explain why the theory of path coalgebras for infinite quivers (see for instance \cite{SimsonCoalg}) has made more progress than the theory of path algebras for infinite quivers.  To work with algebras, one should treat \ospeech path algebra\cspeech as a contravariant functor and work with pseudocompact algebras.

\subsection{Functors from $\cat{VQuiv}$}\label{subsection completed path algebra}

Given a finite Vquiver $VQ = (VQ_0^*, VQ_{e,f})$, define the following objects:
\begin{itemize}
\item The semisimple pointed algebra 
\[\Sigma_{VQ} = \prod_{e\in VQ_0}k.\]
\item The vector space $VQ_{1} = \bigoplus_{e,f\in VQ_0}VQ_{e,f}$ treated as a $\Sigma_{VQ}$-bimodule with multiplication from $(\lambda_g)_{g\in VQ_0}\in \Sigma_{VQ}$ on an element $x$ of $VQ_{e,f}$ defined as
\[x\cdot(\lambda_g) = x\lambda_e\quad,\quad (\lambda_g)\cdot x = \lambda_fx.\]
\end{itemize}
We send $VQ$ to $k\db{VQ} = T\db{\Sigma_{VQ},VQ_{1}}$. By Lemma \ref{Lemma radical of CPA what we expect} we have that $k\db{VQ} \in \cat{PAlg}$.

\medskip

Given a map of Vquivers $\rho:VR\to VQ$, define a map $\alpha_0':\Sigma_{VR}\to \Sigma_{VQ}$ on the obvious basis of $VR_0$ by sending $e$ to $\rho(e)$ if $\rho(e)\neq *$ and to 0 otherwise.  Summing the linear maps $VR_{e,f} \to VQ_{\rho(e),\rho(f)}$ we obtain a linear transformation $\alpha_1':VR_{1}\to VQ_{1}$, which is easily checked to be a $\Sigma_{VQ}$-bimodule homomorphism via $\alpha_0'$.  Now, regarding $\Sigma_{VQ}$ and $VQ_{1}$ as subspaces of $T\db{\Sigma_{VQ},VQ_{1}}$, we obtain by composing with the inclusions a continuous algebra homomorphism $\alpha_0:\Sigma_{VR}\to T\db{\Sigma_{VQ},VQ_{1}}$ and a continuous $\Sigma_{VR}$-bimodule homomorphism $VR_{1}\to T\db{\Sigma_{VQ},VQ_{1}}$.  By the universal property of the completed tensor algebra, we gain a unique continuous algebra homomorphism $T\db{\Sigma_{VR},VR_{1}}\to T\db{\Sigma_{VQ},VQ_{1}}$. Note that this homomorphism is surjective when $\rho$ is surjective (this follows by construction, as $\alpha_0'$ and $\alpha_1'$ are surjective in this case). 

In this way we obtain a covariant functor $k\db{-}:\cat{VQuiv}\to \cat{PAlg}$ (whose restriction to $\cat{SVQuiv}$ is a functor to $\cat{SPAlg}$).

\medskip

Observe that $CPA(-)$ 
%restricted to $\cat{IQuiv}$ 
is the composition $k\db{-}\circ V(-)$.

\medskip

One obtains the functor $\widetilde{k}\db{-}:\cat{VQuiv}\to \cat{PAlg}_1$ by composing $k\db{-}$ with the canonical projection $\Pi_1:\cat{PAlg}\to \cat{PAlg}_1$.

\subsection{Functors from $\cat{PAlg}$ and $\cat{PAlg}_1$}

We make the Gabriel quiver construction (considered as a map to Vquivers) functorial.

\medskip

Let $A$ be a pseudocompact pointed $k$-algebra with $A/J^2$ of finite dimension.  Denote by $\mathcal{S}_A$ the non-empty set of splittings of the canonical projection $A\onto A/J$ as in Proposition \ref{WedderburnMalcev}.  The algebra $A/J\iso \prod_{1\leqslant i\leqslant n}k$ has a unique complete set of primitive orthogonal idempotents $P$.   The image $s(P)$ under any splitting yields a complete set of primitive orthogonal idempotents of $A$.  Denoting by $\mathcal{P}_A$ the set of all complete sets of primitive orthogonal idempotents of $A$, we obtain a map $\Omega:\mathcal{S}_A\to \mathcal{P}_A$.

\begin{lemma}
The map $\Omega$ is bijective.
\end{lemma}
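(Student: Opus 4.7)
The plan is to construct an explicit two-sided inverse $\Psi : \mathcal{P}_A \to \mathcal{S}_A$. Given $\{e_1,\ldots,e_n\} \in \mathcal{P}_A$, I would define $\Psi(\{e_1,\ldots,e_n\}) = s$, where $s: A/J \to A$ is the $k$-linear extension of $\bar{e}_i \mapsto e_i$, with $\bar{e}_i := \pi(e_i)$ for the canonical projection $\pi: A \onto A/J$. Two things need to be justified before this makes sense: that $\{\bar{e}_1,\ldots,\bar{e}_n\}$ coincides with the distinguished set $P$, and that the resulting $s$ is an algebra homomorphism. Once these are in hand, $\Omega\Psi = \id$ by construction, so the argument reduces to checking $\Psi\Omega = \id$ (injectivity of $\Omega$) and that $\Psi$ lands in $\mathcal{S}_A$.

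For injectivity of $\Omega$, suppose $s_1, s_2 \in \mathcal{S}_A$ satisfy $s_1(P) = s_2(P)$ as subsets of $A$. For each $\bar{e} \in P$, write $s_1(\bar{e}) = s_2(\bar{e}')$ for some $\bar{e}' \in P$. Applying $\pi$ and using $\pi\circ s_i = \id_{A/J}$ yields $\bar{e} = \bar{e}'$, so $s_1$ and $s_2$ agree on $P$. Since $P$ is a $k$-basis of $A/J \iso \prod_{i=1}^n k$, linearity forces $s_1 = s_2$.

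For surjectivity and the well-definedness of $\Psi$, starting from $\{e_1,\ldots,e_n\} \in \mathcal{P}_A$ I would verify that $\{\bar{e}_1,\ldots,\bar{e}_n\}$ is a complete set of primitive orthogonal idempotents of $A/J$. The relations $e_ie_j = \delta_{ij}e_i$ and $\sum_i e_i = 1_A$ descend; non-vanishing of each $\bar{e}_i$ follows because $e_i \in J(A)$ would make $e_i$ nilpotent, contradicting $e_i$ being a non-zero primitive idempotent; and distinctness follows since $\bar{e}_i = \bar{e}_j$ with $i\neq j$ combined with $\bar{e}_i\bar{e}_j = 0$ would force $\bar{e}_i = 0$. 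Because $\prod_{i=1}^n k$ admits a \emph{unique} complete set of primitive orthogonal idempotents, this set equals $P$, and we obtain a canonical bijection $P \leftrightarrow \{e_1,\ldots,e_n\}$ indexed by $\bar{e}_i \leftrightarrow e_i$. The $k$-linear extension $s$ of this bijection is an algebra homomorphism by comparing multiplication tables ($\bar{e}_i\bar{e}_j = \delta_{ij}\bar{e}_i$ in $A/J$ matches $e_ie_j = \delta_{ij}e_i$ in $A$), continuous since $A/J$ is finite dimensional and discrete, and satisfies $\pi\circ s = \id_{A/J}$ by construction, so $s \in \mathcal{S}_A$ with $\Omega(s) = \{e_1,\ldots,e_n\}$. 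The one delicate step is the verification that the $\bar{e}_i$ are pairwise distinct and non-zero; every other check is a formal consequence of the defining relations of a complete set of primitive orthogonal idempotents in a pointed algebra.
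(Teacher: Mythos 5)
Your proof is correct, and for the injectivity half it takes a genuinely different (and more elementary) route than the paper. The paper argues injectivity via the Wedderburn--Malcev conjugacy statement of Proposition \ref{WedderburnMalcev}: if $s(P)=t(P)$ then for each $e\in P$ the idempotents $s(e)$ and $t(e)$ are conjugate by $1+w$ with $w\in J$, hence differ by an element of $J$, hence cannot be orthogonal, hence are equal. You instead simply apply the canonical projection $\pi:A\onto A/J$ and use that splittings are sections ($\pi\circ s_i=\id_{A/J}$), which immediately identifies which element of $s_2(P)$ equals $s_1(\bar e)$; since $P$ is a $k$-basis of $A/J\iso\prod k$, linearity finishes it. Your argument is shorter and uses less machinery; the paper's argument has the mild virtue of exercising the conjugacy statement that is used repeatedly elsewhere (e.g.\ in Lemma \ref{idempotent orbit characterization lemma}). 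The surjectivity half of your proof is essentially the paper's, with more of the routine verifications written out. One small imprecision: you justify $\bar e_i\neq 0$ by saying $e_i\in J(A)$ would make $e_i$ nilpotent; in the pseudocompact setting $J$ need not be nil, so it is cleaner to note either that an idempotent in the Jacobson radical of any ring is zero (since $1-e$ is invertible and $(1-e)e=0$), or that $e=e^n\in J^n$ for all $n$ and $\bigcap_n J^n=0$ by Proposition \ref{prop complete wrt J}. The conclusion you need is true, so this does not affect the validity of the argument.
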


\begin{proof}
The map is injective because given $s,t\in \mathcal{S}_A$, if $s(P) = t(P)$ then for each $e\in P$, $s(e), t(e)$ are either equal or orthogonal.  By Proposition \ref{WedderburnMalcev}, there is $w\in J$ such that $s(e) = {}^{1+w}t(e) = t(e) + j$ for some $j\in J$.  But $t(e)(t(e)+j) = t(e)+j'$ (some $j'\in J$), which is not 0.  So $s(e), t(e)$ are not orthogonal, and hence are equal.

The map is surjective because given a set $\{f_1,\hdots,f_n\}\in \mathcal{P}_A$, the image $\{f_1+J,\hdots,f_n+J\}$ must be $P$.  The inverse map $f_i+J\mapsto f_i$ yields a linear transformation $A/J\to A$ that is easily confirmed to be a splitting. 
\end{proof}

With notation as in Proposition \ref{WedderburnMalcev}, denote by $\mathcal{G}(A)$ the following subgroup of $\tn{Aut}(A)$:
\[\mathcal{G}(A) = \left\{{}^{1+w}(-)\,|\,w\in J(A)\right\}.\]
When $A$ is understood, we denote this group simply by $\mathcal{G}$.  Given an element $a\in A$, denote by ${}^{\mathcal{G}}a$ its orbit under $\mathcal{G}$.

\begin{defn}\label{GQ on objects}
Let $A$ be an object of $\cat{PAlg}$, $s\in \mathcal S_A$ and $\Omega(s)\in \mathcal{P}_A$ the corresponding set of primitive orthogonal idempotents in $A$. Define the Vquiver $\tn{GQ}(A)$ of $A$ as follows:
\begin{align*}
	\tn{GQ}(A)_0^* & :=\{*\}\cup \{{}^{\mathcal G}e\,|\,e\in \Omega(s)\},\\
    \tn{GQ}(A)_{{}^{\mathcal G}e,{}^{\mathcal G}f} & := f\frac{J(A)}{J^2(A)}e.
\end{align*}
%[[Observe that we use $\frac{a}{b}$ to mean quotient]]
\end{defn}

We must check that this Vquiver is well-defined.  That is, that it does not depend on the choice of $s\in \mathcal{S}_A$:

\begin{lemma}
The Vquiver $\tn{GQ}(A)$ is well-defined.
\end{lemma}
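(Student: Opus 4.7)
The plan is to invoke Proposition~\ref{WedderburnMalcev}: any two splittings $s, t \in \mathcal{S}_A$ of $A \twoheadrightarrow A/J$ differ by conjugation by some $1+w$ with $w \in J(A)$, that is, by an element of $\mathcal{G}$. Writing $\Omega(s) = \{e_1, \ldots, e_n\}$ and $\Omega(t) = \{f_1, \ldots, f_n\}$, where $e_i = s(p_i)$ and $f_i = t(p_i)$ for the unique complete set $P = \{p_1,\ldots,p_n\}$ of primitive orthogonal idempotents of $A/J$, this yields $e_i = {}^{1+w}f_i$ for a single $w \in J(A)$, and hence ${}^{\mathcal{G}}e_i = {}^{\mathcal{G}}f_i$ for each $i$. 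So the two splittings produce the same vertex set.

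The substantive step is verifying that the arrow space $f\,(J/J^2)\,e$ depends only on the orbits ${}^{\mathcal{G}}e$ and ${}^{\mathcal{G}}f$, not on the chosen representatives. This reduces to showing that, given $w_1, w_2 \in J(A)$ and writing $e' = {}^{1+w_1}e$ and $f' = {}^{1+w_2}f$, one has $f'\,(J/J^2)\,e' = f\,(J/J^2)\,e$ as subspaces of $J/J^2$. The key identity is $f'je' \equiv fje \pmod{J^2}$ for every $j \in J$. Rearranging $e'(1+w_1) = (1+w_1)e$ gives $e' \equiv e + w_1 e - e w_1 \pmod{J^2}$, and similarly for $f'$. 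Expanding $f'je'$ via these approximations, every term other than $fje$ contains at least two factors drawn from $J$, namely $j \in J$ together with at least one of $w_1, w_2 \in J$, and therefore lies in $J^2$.

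I expect the main obstacle to be routine bookkeeping in this mod-$J^2$ expansion: one must verify that products like $(w_2 f)\,j\,(w_1 e)$ or $f\,j\,(e w_1)$ all collapse into $J^2$, which follows from $w_1, w_2, j \in J$ together with $J \cdot J = J^2$. Once $f'je' \equiv fje \pmod{J^2}$ is established (the reverse containment follows by symmetry, since $(1+w_i)^{-1} = 1 + w_i'$ for some $w_i' \in J$), the two arrow subspaces of $J/J^2$ coincide, and $\tn{GQ}(A)$ is well-defined.
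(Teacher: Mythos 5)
Your proposal is correct and follows essentially the same route as the paper: Proposition \ref{WedderburnMalcev} handles the vertices, and the arrow spaces are handled by observing that $(1+w)^{-1}=1+z$ with $z\in J$, so that conjugating $e$ and $f$ perturbs $fje$ only by elements of $J^2$. The paper writes out only the one-sided case ${}^{1+w}f\,(J/J^2)\,e = f\,(J/J^2)\,e$ and leaves the general case to the reader, whereas you expand both conjugations at once; the content is identical.
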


\begin{proof}
Proposition \ref{WedderburnMalcev} guarantees that the objects do not depend on $s$.

We must check that given $w,w'\in J$ and $e,f\in \Omega(s)$ we have
\[{}^{1+w}f\frac{J(A)}{J^2(A)}{}^{1+w'}e = f\frac{J(A)}{J^2(A)}e.\]
We prove that ${}^{1+w}f\frac{J(A)}{J^2(A)}e = f\frac{J(A)}{J^2(A)}e$, the general case following in the same spirit.  Observe that $(1+w)^{-1} = 1+z$ for some $z\in J$ and hence for any $j\in J$ we have $({}^{1+w}f)je =fje + y$ for some $y\in J^2$.  Now
\[({}^{1+w}f)(j+J^2)e = ({}^{1+w}f)je+J^2 = fje+J^2 = f(j + J^2)e.\]
The required equality follows.
\end{proof}

This defines $\tn{GQ}(A)$ on objects.  The action on morphisms is the obvious one:

\begin{defn}\label{GQ on morphisms}
Let $\alpha:A\to B$ be a morphism in $\cat{PAlg}$.  
%Write $\mathcal{G} = \mathcal{G}(A)$ and $\mathcal{H} = \mathcal{G}(B)$. 
Define the map $\tn{GQ}(\alpha):\tn{GQ}(A)\to \tn{GQ}(B)$ as follows:
\begin{itemize}
\item On vertices, $\tn{GQ}(\alpha)\left({}^{\mathcal{G}(A)}e\right) = \begin{cases}
{}^{\mathcal{G}(B)}\alpha(e) & \alpha(e)\neq 0, \\
* & \alpha(e)=0.
\end{cases} 
$
\item On arrow spaces, the map $\tn{GQ}(\alpha)\,:\,  f\dfrac{J(A)}{J^2(A)}e\to \alpha(f)\dfrac{J(B)}{J^2(B)}\alpha(e)$ sends $f(j+J^2(A))e$ to 
$\alpha(f)(\alpha(j)+J^2(B))\alpha(e)$.
\end{itemize}
\end{defn}

\begin{lemma}
Given $\alpha$ a morphism in $\cat{PAlg}$, $\tn{GQ}(\alpha)$ as defined above is a well-defined map of Vquivers. Moreover if $\alpha$ is surjective then $\tn{GQ}(\alpha)$ is surjective as well.
\end{lemma}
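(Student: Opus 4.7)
My plan is to verify three things in order: (i) the assignments on vertices and on arrow spaces are well-defined (independent of the choice of orbit representative, and for arrows independent of the representative modulo $J^2$); (ii) the pair $(\tn{GQ}(\alpha)_0, \{\tn{GQ}(\alpha)_{e,f}\})$ satisfies the axioms of a map of Vquivers; and (iii) if $\alpha$ is surjective then each arrow-space map is surjective. The bulk of the work is the bookkeeping in step (i); once that is handled the rest is essentially immediate.

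For well-definedness on vertices, I would take another representative $e' = (1+w)e(1+w)^{-1}$ of the orbit ${}^{\mathcal{G}(A)}e$ with $w \in J(A)$, and apply $\alpha$. By Lemma \ref{Lemma alg hom sends J to J}, $\alpha(w)\in J(B)$, and $\alpha(1+w) = 1+\alpha(w)$ is invertible with inverse $\alpha((1+w)^{-1})$; hence $\alpha(e') = (1+\alpha(w))\alpha(e)(1+\alpha(w))^{-1}$. This shows simultaneously that $\alpha(e')=0$ iff $\alpha(e)=0$, and that when nonzero $\alpha(e')$ and $\alpha(e)$ lie in the same $\mathcal{G}(B)$-orbit. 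For arrow spaces (with $\alpha(e),\alpha(f)\neq 0$), the formula $f(j+J^2(A))e \mapsto \alpha(f)(\alpha(j)+J^2(B))\alpha(e)$ is well-defined because $\alpha$ is a ring homomorphism, so $\alpha(J^2(A))\subseteq J(B)^2\subseteq J^2(B)$; thus representatives of the class $j+J^2(A)$ are sent to the same class modulo $J^2(B)$. Independence from the choice of orbit representatives for $e,f$ follows from a $(1+w)$-conjugation calculation identical in spirit to the proof that $\tn{GQ}(A)$ itself is well-defined.

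For (ii), the map $\tn{GQ}(\alpha)_{{}^{\mathcal G}e,{}^{\mathcal G}f}$ is linear, being a composition of $\alpha$ with left/right multiplication by idempotents and passage to the quotient. It remains to verify the bijection condition on the vertex map: the elements of $\tn{GQ}(A)_0$ not mapped to $*$ must be in bijection with $\tn{GQ}(B)_0$. This is precisely where the definition of morphism in $\cat{PAlg}$ enters, via the preceding lemma characterising \emph{respects primitive idempotents}: fixing a set $\Omega(s_A) = \{e_1,\dots,e_n\}$, the nonzero images $\{\alpha(e_i)\}_{i\leqslant s}$ form a complete set of primitive orthogonal idempotents of $B$. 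By Proposition \ref{WedderburnMalcev} applied to $B$, this set is $\mathcal{G}(B)$-conjugate to any fixed $\Omega(s_B)$, so the orbits ${}^{\mathcal{G}(B)}\alpha(e_i)$ exhaust $\tn{GQ}(B)_0$. Distinct nonzero $\alpha(e_i)$ determine distinct orbits because their residues in $B/J(B)$ are distinct basis idempotents and $\mathcal{G}(B)$ acts trivially modulo $J(B)$, yielding the required bijection.

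For (iii), assume $\alpha$ is surjective. Lemma \ref{radical powers surjective} gives $\alpha(J^n(A)) = J^n(B)$ for every $n$, so $\alpha$ induces a surjection $J(A)/J^2(A)\twoheadrightarrow J(B)/J^2(B)$. Multiplying on the left by $f$ and on the right by $e$, and using $\alpha(f)\alpha(j)\alpha(e) = \alpha(fje)$, yields surjectivity of each arrow-space map of $\tn{GQ}(\alpha)$. The only step requiring genuine care is the well-definedness bookkeeping; everything else reduces to tracking idempotents through a ring homomorphism.
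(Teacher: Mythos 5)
Your proposal is correct and follows essentially the same route as the paper: the key point in both is that $\alpha$ carries a complete set of primitive orthogonal idempotents of $A$ to a complete set for $B$ together with some zeros (which the paper dismisses as ``well-known and easily checked'' and you derive from the lemma characterising morphisms of $\cat{PAlg}$ via surjectivity on radical quotients), and surjectivity on arrow spaces comes from $\alpha(J(A))=J(B)$ exactly as in the paper. The only difference is that you spell out the representative-independence bookkeeping that the paper leaves implicit; no gap.
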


\begin{proof}
Let $s$ be an element of $\mathcal{S}_A$ and $\Omega(s)$ the corresponding complete set of primitive orthogonal idempotents of $A$.  It is well-known and easily checked that $\alpha$ maps a subset of $\Omega(s)$ bijectively onto some element of $\mathcal{P}_B$ and the rest to 0.  It follows that $\tn{GQ}(\alpha)$ makes sense on vertices.  It is immediate that $\tn{GQ}(\alpha)$ makes sense on arrow spaces. Supposing that $\alpha$ is surjective we check that $\tn{GQ}(\alpha)$ is surjective as well.  Fix $w\in J(B)$.  By Corollary \ref{radical surjective}, there is $j\in J(A)$ with $\alpha(j)=w$.  The element $\alpha(f)(w+J^2(B))\alpha(e) = \tn{GQ}(\alpha)(f(j+J^2(A))e)$ and hence $\tn{GQ}(\alpha)$ is surjective, as required. 
\end{proof}

The operation $\tn{GQ}(-)$ defined on objects in Definition \ref{GQ on objects} and on morphisms in Definition \ref{GQ on morphisms} defines a covariant functor $\tn{GQ}(-):\cat{PAlg}\to \cat{VQuiv}$.

We will factorize $\tn{GQ}(-)$ through the category $\cat{PAlg}_1$.  To do so, we need the following simple fact about primitive idempotents.

\begin{lemma}\label{idempotent orbit characterization lemma}
Let $e,f$ be primitive idempotents of $A\in \cat{PAlg}$.  Then ${}^{1+w}e=f$ for some $w\in J(A)$ if, and only if, $e-f\in J(A)$.
\end{lemma}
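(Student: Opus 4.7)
The plan is to prove the two directions separately, with the forward direction being a one-line projection argument and the reverse direction using a classical explicit conjugator.

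For the forward implication, suppose $f = (1+w)e(1+w)^{-1}$ with $w \in J(A)$. I would simply apply the canonical projection $\pi : A \onto A/J(A)$: since $\pi(w) = 0$, we get $\pi(1+w) = 1$ and hence $\pi(f) = \pi(e)$, so $e - f \in \ker \pi = J(A)$. Nothing subtle happens here.

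For the reverse implication, suppose $e - f \in J(A)$. I propose to write down the classical element
\[
u = ef + (1-e)(1-f),
\]
and observe, by direct expansion, that $u = 1 - e - f + 2ef = 1 - (e-f)^2$. Since $e - f \in J(A)$, we have $u \in 1 + J(A)$; by Proposition \ref{prop complete wrt J}, $A$ is complete with respect to the $J$-adic topology, so the Neumann series $\sum_{n\geqslant 0}(e-f)^{2n}$ converges in $A$ and $u$ is a unit with $u^{-1} \in 1 + J(A)$. A short calculation using $e^2 = e$ and $f^2 = f$ gives $eu = ef = uf$, which rearranges to $u^{-1} e u = f$. Writing $u^{-1} = 1 + w$ with $w \in J(A)$ produces the desired element, so $f = {}^{1+w}e$.

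The main (and really the only) obstacle is remembering the explicit formula for $u$; everything else is a straightforward manipulation plus the appeal to $J$-adic completeness. An alternative route would be to extend $\{e\}$ and $\{f\}$ to complete sets of primitive orthogonal idempotents and apply Proposition \ref{WedderburnMalcev} to the resulting splittings of $A \onto A/J(A)$, but this seems more roundabout than the direct conjugation argument.
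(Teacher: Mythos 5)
Your overall strategy --- conjugating $e$ to $f$ by the classical element $u = ef + (1-e)(1-f)$ --- is sound and genuinely different from the paper's proof, which extends $e$ and $f$ to Wedderburn--Malcev complements $\Sigma,\Sigma'$, conjugates one onto the other via Proposition \ref{WedderburnMalcev}, and then uses primitivity and non-orthogonality to force ${}^{1+w}e=f$; that is essentially the ``more roundabout'' route you mention at the end. However, there is a concrete error in your computation: expanding gives $u = 1 - e - f + 2ef$, whereas $1 - (e-f)^2 = 1 - e - f + ef + fe$, and these agree only when $e$ and $f$ commute. For instance, with $e = E_{11}$ and $f = E_{11}+E_{21}$ in the lower triangular $2\times 2$ matrices one has $u = 1 - E_{21}$ while $1-(e-f)^2 = 1$. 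Consequently the Neumann series $\sum_{n\geqslant 0}(e-f)^{2n}$ is \emph{not} an inverse of $u$ (in the example it equals $1$, but $u^{-1} = 1+E_{21}$), so the step where you invert $u$ fails as written.

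The gap is easily repaired: reducing modulo $J(A)$ and using $\bar{e}=\bar{f}$ gives $\bar{u} = 1 - 2\bar{e} + 2\bar{e}^2 = 1$, so $u \in 1 + J(A)$ and is therefore a unit with inverse again in $1+J(A)$ (elements of $1+J$ are invertible in any unital ring, so no appeal to $J$-adic completeness is needed). Alternatively, pair $u$ with $u' = fe + (1-f)(1-e)$ and check $uu' = u'u = 1-(e-f)^2$, which is the correct home for your Neumann series. With either repair, your verification $eu = uf = ef$ is correct and yields $u^{-1}eu = f$, completing the argument; note that this route, unlike the paper's, does not even use primitivity of $e$ and $f$.
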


\begin{proof}
The forward implication is obvious.  Suppose that $e-f=j$ for some $j\in J$.  Let $\Sigma, \Sigma'$ be semisimple subalgebras of $A$ isomorphic to $A/J$ and containing $e,f$ respectively.  Then by Proposition \ref{WedderburnMalcev} there is some $w\in J$ with ${}^{1+w}\Sigma = \Sigma'$.  Hence either ${}^{1+w}e$ is equal to $f$ or their product is 0.  But
\[{}^{1+w}ef = {}^{1+w}e(e+j) = e+j'\qquad (\hbox{some }j'\in J)\]
and so ${}^{1+w}e = f$, as required.
\end{proof}

\begin{prop} \label{unique functor from SPAlg1}
There exists a unique functor $\widetilde{\tn{GQ}}(-):\cat{PAlg}_1\to \cat{VQuiv}$ such that $\tn{GQ}(-) = \widetilde{\tn{GQ}}(-)\circ \Pi_1$, where $\Pi_1:\cat{PAlg}\to \cat{PAlg}_1$ is the canonical projection.
\end{prop}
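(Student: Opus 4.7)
The plan is to invoke the universal property of quotient categories from \cite[\S II.8]{MacLane}: a functor $F:\cat{PAlg}\to \cat{VQuiv}$ factors uniquely through $\cat{PAlg}_1 = \cat{PAlg}/\sim_1$ if and only if $F$ identifies any pair of $\sim_1$-equivalent morphisms.  The existence and uniqueness of $\widetilde{\tn{GQ}}$ therefore reduces to verifying the single claim: whenever $\alpha,\beta:A\to B$ satisfy $\alpha\sim_1\beta$ in $\cat{PAlg}$, we have $\tn{GQ}(\alpha) = \tn{GQ}(\beta)$ as morphisms of Vquivers.  This in turn splits into checking agreement on vertices and agreement on arrow spaces.

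For the vertex map, I would argue as follows.  Let $s\in \mathcal{S}_A$ and $e\in \Omega(s)$.  Since $\alpha\sim_1 \beta$ we have $\alpha(e)-\beta(e)\in J(B)$.  Morphisms in $\cat{PAlg}$ respect primitive idempotents, so each of $\alpha(e), \beta(e)$ is either $0$ or a primitive idempotent of $B$.  If $\alpha(e)=0$ then $\beta(e)\in J(B)$; but a primitive idempotent cannot lie in $J(B)$ (it would then be nilpotent and so equal to $0$), forcing $\beta(e)=0$, and both maps send ${}^{\mathcal{G}(A)}e$ to $*$.  If $\alpha(e)\neq 0$ then symmetrically $\beta(e)\neq 0$, and Lemma \ref{idempotent orbit characterization lemma} applied to the primitive idempotents $\alpha(e),\beta(e)$ of $B$ (whose difference lies in $J(B)$) yields ${}^{\mathcal{G}(B)}\alpha(e) = {}^{\mathcal{G}(B)}\beta(e)$.

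For the arrow spaces, the above vertex calculation combined with the well-definedness of $\tn{GQ}(B)$ already established for the Vquiver structure ensures that the codomains $\alpha(f)\tfrac{J(B)}{J^2(B)}\alpha(e)$ and $\beta(f)\tfrac{J(B)}{J^2(B)}\beta(e)$ coincide as subspaces; a completely analogous statement holds for the domains.  I would then write $\alpha(e)=\beta(e)+u$, $\alpha(f)=\beta(f)+v$ with $u,v\in J(B)$, and for $j\in J(A)$ write $\alpha(j)=\beta(j)+y$ with $y\in J^2(B)$ (using precisely the condition $(\alpha-\beta)(J(A))\subseteq J^2(B)$ that distinguishes $\sim_1$ from $\sim_0$).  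Expanding the product $\alpha(f)\alpha(j)\alpha(e) = (\beta(f)+v)(\beta(j)+y)(\beta(e)+u)$ and noting that every cross term contains either the factor $y\in J^2(B)$ or at least two factors from $J(B)$, one finds $\alpha(f)\alpha(j)\alpha(e)\equiv \beta(f)\beta(j)\beta(e) \pmod{J^2(B)}$, which gives the desired equality in $J(B)/J^2(B)$.

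The main obstacle is really a bookkeeping one: one has to keep straight that the Vquiver $\tn{GQ}(B)$ assigns arrow spaces to \emph{orbits} of idempotents, so the equality $\tn{GQ}(\alpha) = \tn{GQ}(\beta)$ is literal equality of maps of Vquivers only once the vertex computation has shown the orbits coincide.  After that identification, the algebraic computation on arrow spaces is short but explains precisely why the relation $\sim_1$ (rather than the coarser $\sim_0$) is the correct relation: the finer condition on $J(A)$ is exactly what forces the images of $j\in J(A)$ to agree modulo $J^2(B)$.
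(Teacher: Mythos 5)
Your proposal is correct and follows essentially the same route as the paper: reduce to showing $\tn{GQ}(\alpha)=\tn{GQ}(\beta)$ whenever $\alpha\sim_1\beta$, handle vertices via Lemma \ref{idempotent orbit characterization lemma}, and handle arrow spaces using $(\alpha-\beta)(J(A))\subseteq J^2(B)$; in fact you spell out the cross-term expansion and the zero-idempotent case that the paper leaves implicit. One tiny quibble: a nonzero idempotent in $J(B)$ is excluded not because $J(B)$ is nilpotent (it need not be in the pseudocompact setting) but because $e=e^n\in\bigcap_n J^n(B)=0$.
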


\begin{proof}
We define $\widetilde{\tn{GQ}}(-)$ as follows.  On objects, $\widetilde{\tn{GQ}}(A) = \tn{GQ}(A)$.  Given a morphism $[\alpha]:A\to B$ in $\cat{PAlg}_1$ with representative $\alpha$ in $\cat{PAlg}$, define $\widetilde{\tn{GQ}}([\alpha]) = \tn{GQ}(\alpha)$.  We must check that this definition does not depend on the representative of $[\alpha]$, so consider two representatives $\alpha, \alpha'\in [\alpha]$.  To confirm that $\widetilde{\tn{GQ}}([\alpha])$ is well-defined on vertices, we need to check that ${}^{\mathcal{G}(B)}\alpha(e) = {}^{\mathcal{G}(B)}\alpha'(e)$ for any primitive idempotent $e$ of $A$.  But $\alpha(e) - \alpha'(e) = (\alpha-\alpha')(e)\in J(B)$ since $\alpha\sim_1\alpha'$ and hence by Lemma \ref{idempotent orbit characterization lemma}, ${}^{\mathcal{G}(B)}\alpha(e) = {}^{\mathcal{G}(B)}\alpha'(e)$.  We still need to check that the maps $\tn{GQ}(\alpha)$ and $\tn{GQ}(\alpha')$ agree on arrow spaces $\tn{GQ}(A)_{e,f}$, but using that $(\alpha-\alpha')(A)\subseteq J(B)$ and $(\alpha-\alpha')(J(A))\subseteq J^2(B)$, for $j\in J$ we have
	\begin{align*}
		\tn{GQ}(\alpha')(f(j+J^2(A))e) & = \alpha'(f)(\alpha'(j)+J^2(B))\alpha'(e)\\
								&= \alpha(f)(\alpha(j)+J^2(B))\alpha(e)\\
								&= \tn{GQ}(\alpha)(f(j+J^2(A))e).
	\end{align*}
\end{proof}

\subsection{Examples}

To help our intuition with the definitions of Sections \ref{section categories} and \ref{section functors}, we briefly present some simple examples.
\begin{itemize}
\item The algebra $k\db{VQ}$ is semisimple if and only if $VQ$ has no edges.  If $B$ is semisimple and $A$ is arbitrary, then 
\[\tn{Hom}_{\cat{PAlg}}(A,B) = \tn{Hom}_{\cat{PAlg}_1}(A,B).\]
This equality is rare, though it can occur with non-semisimple algebras $B$.  For example, it is the case when $B = k\db{x}/x^2$.

\item Let $VQ$ be the Vquiver
\[\xymatrix{1 \ar[dr]_{\langle b\rangle}\ar[rr]^{\langle a\rangle} && 2 & {*} \\ 
& 3\ar[ur]_{\langle c\rangle} &&}\]
Then $k\db{VQ}$ is $7$-dimensional with basis $e_1, e_2, e_3, a,b,c, cb$ (where $e_i$ is the idempotent at vertex $i$).  The map $\alpha$ defined on this basis to be the identity on the $e_i$ and on $b,c,cb$, but sending $a$ to $a+cb$ is an algebra automorphism.  Then $\alpha\sim_1\tn{id}$, because
\[(\alpha - \tn{id})(a) = a+cb-a = cb\in J^2.\]

\item We classify those completed path algebras $k\db{VQ}$ for which the set of representatives of the class $[\tn{id}]_1$ is exactly the group $\mathcal{G}(k\db{VQ})$.  They correspond to those quivers with the property that there does not exist a pair of vertices $x,y$ having a non-zero path ($\rho$) of length $1$ from $x$ to $y$ and also a non-zero path ($\theta$) of length greater than $1$ from $x$ to $y$.  One direction is clear.  For the other, simply note that the morphism defined to be the identity on idempotents and every arrow in a basis except $\rho$, but sending $\rho$ to $\rho + \theta$, defines an algebra homomorphism in $[\tn{id}]_1$ that is not in $\mathcal{G}(k\db{VQ})$.
\end{itemize}

\section{$\widetilde{k}\db{-}$ is left adjoint to $\widetilde{\tn{GQ}}(-)$}\label{section main adjunction}

We prove in this section our first main theorem: the functor $\widetilde{k}\db{-}$ is left adjoint to $\widetilde{\tn{GQ}}(-)$.  We do so by constructing a natural bijection
\[
	\Psi_{VQ,A}:\tn{Hom}_{\cat{VQuiv}}(VQ,\widetilde{\tn{GQ}}(A))\to \tn{Hom}_{\cat{PAlg}_1}(\widetilde{k}\db{VQ},A)
\]
for $VQ\in \cat{VQuiv}$ and $A\in \cat{PAlg}_1$.

\medskip

Fix (by Proposition \ref{WedderburnMalcev}) a splitting $s$ of $A\to A/J$ and let $\Sigma = s(A/J)$.  Fix also (by Lemma \ref{Lemma J to J by J2 splits}) a splitting $t$ of the projection $J\to J/J^2$ as $\Sigma$-bimodules.  The orbit under $\mathcal{G} = \mathcal{G}(A)$ of a primitive idempotent $f$ of $A$ intersects $\Sigma$ in exactly one point, which we denote by $f_{{}_{\Sigma}}$.  

Fix a morphism $\rho:VQ\to \widetilde{\tn{GQ}}(A)$.  The map defined on the basis $VQ_0$ by
\begin{align*}
	                  VQ_0 & \rightarrow A\\
			e\,\,\,\,      & \mapsto \rho(e)_{{}_{\Sigma}}
\end{align*}
yields a homomorphism of algebras $\varphi_0:\Sigma_{VQ}\to A$ (which depends on $s$).  For each pair $e,f\in VQ_0$, $\rho$ restricts to a map $VQ_{e,f}\to \rho(f)_{{}_{\Sigma}}\frac{J(A)}{J^2(A)}\rho(e)_{{}_{\Sigma}}$.  Composing with $t$ we obtain
\[VQ_{e,f}\xrightarrow{\rho} \rho(f)_{{}_{\Sigma}}\frac{J(A)}{J^2(A)}\rho(e)_{{}_{\Sigma}} \xrightarrow{t} A.\]
Summing these maps as $e,f$ vary, we obtain a $\Sigma_{VQ}$-bimodule homomorphism
\[\varphi_1 : VQ_1 \to A\]
(which depends on $t$ and $s$).  By the universal property of the completed tensor algebra \ref{Lemma UP tensor algebra}, the maps $\varphi_0,\varphi_1$ correspond to a unique homomorphism of algebras $\varphi_{s,t}:k\db{VQ}\to A$ (which is easily checked to be surjective if $\rho$ is surjective).

\begin{lemma}
The equivalence class of $\varphi_{s,t}$ in $\cat{PAlg}_1$ is independent of $s$ and $t$.
\end{lemma}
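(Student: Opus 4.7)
The plan is to verify the two inclusions defining $\sim_1$ directly, namely $(\varphi_{s,t} - \varphi_{s',t'})(k\db{VQ}) \subseteq J(A)$ and $(\varphi_{s,t} - \varphi_{s',t'})(J(k\db{VQ})) \subseteq J^2(A)$. First I would reduce to the two generating pieces $\Sigma_{VQ}$ and $VQ_1$: by Lemma \ref{Lemma radical of CPA what we expect}, $J(k\db{VQ}) = \prod_{n \geq 1} VQ_1^{\ctens n}$, and the algebraic direct sum $\Sigma_{VQ} \oplus \bigoplus_{n \geq 1} VQ_1^{\otimes n}$ is dense in $k\db{VQ}$ in the product topology. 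Since both $\varphi_{s,t}$ and $\varphi_{s',t'}$ are continuous and both $J(A)$ and $J^2(A)$ are closed in $A$ (Lemma \ref{radical facts when J by J2 is fd} and the remark following it), it will suffice to check the inclusions on pure tensors and extend by continuity.

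On $\Sigma_{VQ}$ I would note that for each vertex $e \in VQ_0$ with $\rho(e) \neq *$, the elements $\varphi_{s,t}(e) = \rho(e)_{\Sigma}$ and $\varphi_{s',t'}(e) = \rho(e)_{\Sigma'}$ are primitive idempotents of $A$ lying in the common $\mathcal{G}(A)$-orbit $\rho(e)$, so by Lemma \ref{idempotent orbit characterization lemma} their difference lies in $J(A)$ (for vertices sent to $*$ both morphisms vanish). On $VQ_1$, for $v \in VQ_{e,f}$, the image $\rho_{e,f}(v) \in J(A)/J^2(A)$ does not depend on $s$: this is exactly the well-definedness of arrow spaces established immediately after Definition \ref{GQ on objects}, which shows that $\rho(f)_\Sigma \bigl(J(A)/J^2(A)\bigr) \rho(e)_\Sigma$ is the same subspace of $J(A)/J^2(A)$ for every choice of idempotent representatives. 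The lifts $t(\rho_{e,f}(v))$ and $t'(\rho_{e,f}(v))$ are therefore two sections of $J(A) \twoheadrightarrow J(A)/J^2(A)$ applied to the same element, and so differ by an element of $J^2(A)$.

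Finally, on a pure tensor $v_1 \ctens \cdots \ctens v_n$ with $n \geq 2$, both $\varphi_{s,t}$ and $\varphi_{s',t'}$ produce elements of $J^n(A) \subseteq J^2(A)$, simply because each factor is sent into $J(A)$, so their difference also lies in $J^2(A)$. Combining the cases, $\varphi_{s,t} - \varphi_{s',t'}$ sends the dense subspace $\Sigma_{VQ} \oplus \bigoplus_{n \geq 1} VQ_1^{\otimes n}$ of $k\db{VQ}$ into $J(A)$, and sends $\bigoplus_{n \geq 1} VQ_1^{\otimes n} \subseteq J(k\db{VQ})$ into $J^2(A)$; continuity together with the closedness of $J(A)$ and $J^2(A)$ then extends these inclusions to all of $k\db{VQ}$ and $J(k\db{VQ})$ respectively. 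The only point requiring real care, and the main obstacle, is keeping a clean separation between what depends on $s$ (the choice of idempotent representatives of orbits in $A$) and what depends on $t$ (the lift from $J(A)/J^2(A)$ to $J(A)$); once this bookkeeping is in place the estimates themselves are essentially mechanical.
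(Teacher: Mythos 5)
Your proposal is correct and follows essentially the same route as the paper: the two key computations are identical, namely that $\rho(e)_{\Sigma}-\rho(e)_{\Sigma'}\in J(A)$ by Lemma \ref{idempotent orbit characterization lemma} since both idempotents lie in the orbit $\rho(e)$, and that $(\varphi_{s,t}-\varphi_{s',t'})(v)=(t-t')\rho(v)\in J^2(A)$ because $t,t'$ are two sections of $J(A)\onto J(A)/J^2(A)$ applied to the same element. The only difference is that you spell out, via density of the pure tensors and closedness of $J(A)$ and $J^2(A)$, the reduction that the paper compresses into ``it suffices to check on primitive idempotents'' and the decomposition $J(k\db{VQ})=VQ_1\oplus J^2(k\db{VQ})$; this is a harmless (and arguably welcome) elaboration rather than a new argument.
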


\begin{proof}
Let $s':A/J\to A$ (corresponding to the subalgebra $\Sigma'$) and $t':J/J^2\to J$ be other splittings.  In order to check that $(\varphi_{s,t} - \varphi_{s',t'})(k\db{VQ})\subseteq J(A)$, it suffices to check on primitive idempotents.  But $(\varphi_{s,t} - \varphi_{s',t'})(e) = \rho(e)_{{}_{\Sigma}} - \rho(e)_{{}_{\Sigma'}}$, which is an element of $J(A)$ by Lemma \ref{idempotent orbit characterization lemma}.

Since $J(k\db{VQ}) = VQ_1\oplus J^2(k\db{VQ})$, it remains to confirm that $(\varphi_{s,t} - \varphi_{s',t'})(VQ_1)\subseteq J^2(A)$.  Both $t$ and $t'$ are splittings of $J(A)\to J(A)/J^2(A)$ and hence $(t-t')(x)\in J^2(A)$ for any $x\in J(A)/J^2(A)$.  For any $v\in VQ_1$ it follows that
\[(\varphi_{s,t} - \varphi_{s',t'})(v) = (t-t')\rho(v) \in J^2(A).\]
Thus $[\varphi_{s,t}]_1 = [\varphi_{s',t'}]_1$, as required.
\end{proof}

It follows that the correspondence $\rho\mapsto [\varphi_{s,t}]_1$ yields a well-defined function
\[
	\Psi_{VQ,A}:\tn{Hom}_{\cat{VQuiv}}(VQ,\widetilde{\tn{GQ}}(A))\to \tn{Hom}_{\cat{PAlg}_1}(\widetilde{k}\db{VQ},A).
\]

\begin{lemma} \label{Bijectivity of Phi.}
The map $\Psi_{VQ,A}$ is bijective.
\end{lemma}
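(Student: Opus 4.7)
The plan is to prove bijectivity by constructing an explicit two-sided inverse $\Phi_{VQ,A}$, built from the functor $\widetilde{\tn{GQ}}(-)$ and a natural unit map $\eta_{VQ}: VQ \to \widetilde{\tn{GQ}}(\widetilde{k}\db{VQ})$. First I would define $\eta_{VQ}$: the primitive idempotents of $k\db{VQ}$ are (up to the $\mathcal{G}$-action) exactly the idempotents $e \in VQ_0 \subseteq \Sigma_{VQ}$, so $\eta_{VQ}$ sends a vertex $e$ of $VQ$ to its orbit ${}^{\mathcal{G}}e$. On arrow spaces, $\tn{GQ}(k\db{VQ})_{{}^{\mathcal{G}}e,{}^{\mathcal{G}}f}$ equals $f\,J(k\db{VQ})/J^2(k\db{VQ})\,e$, and since $J(k\db{VQ}) = VQ_1 \oplus J^2(k\db{VQ})$ by Lemma \ref{Lemma radical of CPA what we expect}, this arrow space is canonically isomorphic to $VQ_{e,f}$; I let $\eta_{VQ}$ be this identification. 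Then I would define $\Phi_{VQ,A}([\alpha]_1) := \widetilde{\tn{GQ}}([\alpha]_1) \circ \eta_{VQ}$, which is well-defined since $\widetilde{\tn{GQ}}(-)$ is a functor on $\cat{PAlg}_1$ by Proposition \ref{unique functor from SPAlg1}.

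Next I would check $\Phi \circ \Psi = \tn{id}$. Given $\rho: VQ \to \widetilde{\tn{GQ}}(A)$, let $\varphi_{s,t}$ be the representative constructed in the definition of $\Psi$. On vertices, $\widetilde{\tn{GQ}}([\varphi_{s,t}]_1)(\eta_{VQ}(e)) = {}^{\mathcal{G}(A)}\varphi_{s,t}(e) = {}^{\mathcal{G}(A)}\rho(e)_{{}_{\Sigma}} = \rho(e)$ by the definition of $f_{{}_{\Sigma}}$. On arrow spaces, for $v \in VQ_{e,f}$ we get $\varphi_{s,t}(f)(\varphi_{s,t}(v) + J^2(A))\varphi_{s,t}(e) = \rho(f)_{{}_{\Sigma}}(t\rho(v) + J^2(A))\rho(e)_{{}_{\Sigma}}$, and since $t$ splits $J(A) \onto J(A)/J^2(A)$, the class $t\rho(v) + J^2(A)$ equals $\rho(v)$, yielding $\rho(v)$ back.

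Then I would verify $\Psi \circ \Phi = \tn{id}$. Given $[\alpha]_1$ with representative $\alpha: k\db{VQ} \to A$, let $\rho = \widetilde{\tn{GQ}}([\alpha]_1) \circ \eta_{VQ}$ and let $\varphi_{s,t}$ be built from this $\rho$ using some choice of splittings. I need to show $\varphi_{s,t} \sim_1 \alpha$. Since $k\db{VQ} = \Sigma_{VQ} \oplus J(k\db{VQ})$ and $J(k\db{VQ}) = VQ_1 \oplus J^2(k\db{VQ})$, and since both maps are algebra homomorphisms sending $J(k\db{VQ})$ into $J(A)$ (Lemma \ref{Lemma alg hom sends J to J}), it suffices to compare them on the primitive idempotents $e \in VQ_0$ and on $VQ_1$. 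On idempotents, $\alpha(e)$ and $\varphi_{s,t}(e) = \rho(e)_{{}_{\Sigma}}$ lie in the same $\mathcal{G}(A)$-orbit, hence differ by an element of $J(A)$ by Lemma \ref{idempotent orbit characterization lemma}. On an arrow $v \in VQ_{e,f}$, $\varphi_{s,t}(v) = t\rho(v) = t(\alpha(f)(\alpha(v)+J^2(A))\alpha(e))$, whose class in $J(A)/J^2(A)$ agrees with that of $\alpha(v)$ (using that $\alpha(v) = \alpha(f)\alpha(v)\alpha(e)$ already), so $\alpha(v) - \varphi_{s,t}(v) \in J^2(A)$. Hence $(\alpha - \varphi_{s,t})(k\db{VQ}) \subseteq J(A)$ and $(\alpha - \varphi_{s,t})(J(k\db{VQ})) \subseteq J^2(A)$, as required.

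The main obstacle is not technical difficulty but a careful bookkeeping of which choices (the splittings $s,t$ in the construction of $\Psi$; the complete set of primitive idempotents implicit in $\widetilde{\tn{GQ}}$) wash out in the passage to $\cat{PAlg}_1$ and in the quotient by the $\mathcal{G}(A)$-action; this is precisely what the preceding well-definedness lemma and Lemma \ref{idempotent orbit characterization lemma} were designed to handle, so these invocations should suffice.
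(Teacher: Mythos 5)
Your proof is correct and follows essentially the same route as the paper: your inverse $\Phi_{VQ,A}([\alpha]_1)=\widetilde{\tn{GQ}}([\alpha]_1)\circ\eta_{VQ}$ is precisely the restriction of $\alpha$ to $\Sigma_{VQ}$ and $VQ_1$ that the paper uses to prove surjectivity, and your verification that $\Phi_{VQ,A}\circ\Psi_{VQ,A}=\tn{id}$ is the contrapositive of the paper's direct injectivity check (distinct $\rho,\rho'$ yield $\varphi_{s,t}\not\sim_1\varphi'_{s,t}$). The only difference is packaging: you exhibit an explicit two-sided inverse, while the paper argues surjectivity and injectivity separately via the same computations.
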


\begin{proof}
We first check surjectivity.  Fix an algebra homomorphism $\alpha:k\db{VQ}\to A$.  Restricting $\alpha$ to $\Sigma_{VQ}$ and to $VQ_1$ yields a map of Vquivers $\rho:VQ\to \widetilde{\tn{GQ}}(A)$. %(the images of $\Sigma_{VQ}$ and of $VQ_1$ determining splittings $s:A/J\to A$ and $t: J/J^2\to J$, respectively).  
Now $\Psi_{VQ,A}(\rho) = [\alpha]_1$ and so $\Psi_{VQ,A}$ is onto.

It remains to check injectivity.  Suppose that $\rho, \rho':VQ\to \widetilde{\tn{GQ}}(A)$ are distinct Vquiver maps. Denote by $\varphi_{s,t}, \varphi'_{s,t}$ the homomorphisms constructed from $\rho, \rho'$ respectively.  Then either 
\begin{itemize}
\item $\rho(e)\neq\rho'(e)$ for some $e\in VQ_0$.  But then $\rho(e)_{{}_{\Sigma}}$ and $\rho'(e)_{{}_{\Sigma}}$ are in different $\mathcal{G}(A)$-orbits, so that $\varphi_{s,t}\not\sim_1\varphi'_{s,t}$.

\item or the maps agree on vertices but there is $v\in VQ_1$ such that $\rho(v)\neq\rho'(v)$.  Now
\[(\varphi_{s,t} - \varphi'_{s,t})(v) = t(\rho-\rho')(v)\not\in J^2\]
because $(\rho-\rho')(v)\neq 0$.  Again, we have $\varphi_{s,t}\not\sim_1\varphi'_{s,t}$.
\end{itemize}
\end{proof}

We must still check naturality in both variables.  Naturality in the first variable is a routine calculation.  We prove naturality in the second:

\begin{lemma} \label{naturality at A in first adjunction}
Fix a  Vquiver $VQ$.  The map
\[\Psi_{VQ,A}:\tn{Hom}_{\cat{VQuiv}}(VQ,\widetilde{\tn{GQ}}(A))\to \tn{Hom}_{\cat{PAlg}_1}(\widetilde{k}\db{VQ},A)
\]
is the component at $A$ of a natural transformation
\[\Psi_{VQ,-}:\tn{Hom}_{\cat{VQuiv}}(VQ,\widetilde{\tn{GQ}}(-))\to \tn{Hom}_{\cat{PAlg}_1}(\widetilde{k}\db{VQ},-).
\]
\end{lemma}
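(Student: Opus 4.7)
The plan is to verify naturality of $\Psi_{VQ,-}$ at a morphism $[\gamma]_1:A\to B$ in $\cat{PAlg}_1$ with representative $\gamma$. Concretely, for every Vquiver map $\rho:VQ\to\widetilde{\tn{GQ}}(A)$, I must verify the identity
\[\Psi_{VQ,B}\bigl(\widetilde{\tn{GQ}}(\gamma)\circ \rho\bigr) = [\gamma]_1\circ \Psi_{VQ,A}(\rho)\]
in $\tn{Hom}_{\cat{PAlg}_1}(\widetilde{k}\db{VQ},B)$. Fix splittings $s_A,t_A$ for $A$ and $s_B,t_B$ for $B$, and let $\varphi = \varphi_{s_A,t_A}:k\db{VQ}\to A$ be the algebra homomorphism constructed from $\rho$, and $\psi = \varphi_{s_B,t_B}:k\db{VQ}\to B$ the one constructed from $\widetilde{\tn{GQ}}(\gamma)\circ\rho$. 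Then the right-hand side is represented by $\gamma\circ\varphi$ and the left-hand side by $\psi$, so it suffices to show $\gamma\circ\varphi\sim_1\psi$. By Lemma \ref{Lemma UP tensor algebra} both algebra homomorphisms are determined by their restrictions to $\Sigma_{VQ}$ and $VQ_1$, so I compare them there and then propagate to all of $k\db{VQ}$.

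On a vertex idempotent $e\in VQ_0$, we have $(\gamma\circ\varphi)(e) = \gamma\bigl(\rho(e)_{{}_{\Sigma_A}}\bigr)$ and $\psi(e) = \bigl(\widetilde{\tn{GQ}}(\gamma)(\rho(e))\bigr)_{{}_{\Sigma_B}}$, with the convention that both are interpreted as $0$ when the relevant vertex is $*$. By Definition \ref{GQ on morphisms} the two elements are either simultaneously zero, or both are primitive idempotents of $B$ lying in the same $\mathcal{G}(B)$-orbit, so Lemma \ref{idempotent orbit characterization lemma} gives $(\gamma\circ\varphi-\psi)(e)\in J(B)$. For $v\in VQ_{e,f}\subseteq VQ_1$, both $(\gamma\circ\varphi)(v) = \gamma(t_A(\rho(v)))$ and $\psi(v) = t_B\bigl(\widetilde{\tn{GQ}}(\gamma)(\rho(v))\bigr)$ lie in $J(B)$, and unwinding Definition \ref{GQ on morphisms} one sees that both reduce modulo $J^2(B)$ to the same class $\widetilde{\tn{GQ}}(\gamma)(\rho(v))\in J(B)/J^2(B)$ (the second because $t_B$ is by construction a splitting of $J(B)\onto J(B)/J^2(B)$); consequently their difference lies in $J^2(B)$.

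Finally, any element of $J^2(k\db{VQ})$ is a (possibly infinite) sum of products of pairs of elements of $VQ_1$, and since both $\gamma\circ\varphi$ and $\psi$ are continuous algebra homomorphisms sending $VQ_1$ into $J(B)$, they send $J^2(k\db{VQ})$ into $J^2(B)$, so their difference on $J^2(k\db{VQ})$ also lies in $J^2(B)$. Combining the three cases with the decomposition $k\db{VQ} = \Sigma_{VQ}\oplus VQ_1\oplus J^2(k\db{VQ})$ yields $(\gamma\circ\varphi-\psi)(k\db{VQ})\subseteq J(B)$ and $(\gamma\circ\varphi-\psi)(J(k\db{VQ}))\subseteq J^2(B)$, which is exactly $\gamma\circ\varphi\sim_1\psi$. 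The main subtlety is the comparison on $VQ_1$, where one must use that $t_A$ and $t_B$ are splittings of the respective radical quotient maps in order to control the $J^2$-difference; independence of the representative $\gamma$ of $[\gamma]_1$ is automatic because postcomposition in $\cat{PAlg}_1$ is well-defined, and the analogous (easier) check of naturality in the $VQ$ variable is then a routine diagram chase using functoriality of $\widetilde{k}\db{-}$.
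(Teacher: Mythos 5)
Your proof is correct and follows essentially the same route as the paper: fix Wedderburn--Malcev splittings on both sides, compare the two homomorphisms on the idempotents via the $\mathcal{G}(B)$-orbit characterization (difference in $J(B)$) and on the arrow space via the fact that both $t_A$, $t_B$ split the radical quotient maps (difference in $J^2(B)$), then conclude $\sim_1$. Your extra explicit step propagating the estimate to $J^2(k\db{VQ})$ is a detail the paper leaves implicit, but it does not change the argument.
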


\begin{proof}
Fix $[\alpha]_1:A \to B$ in $\cat{PAlg}_1$.  We need to confirm that the diagram
\[\xymatrix{
\tn{Hom}_{\cat{VQuiv}}(VQ,\widetilde{\tn{GQ}}(A))\ar[rr]^{\Psi_{VQ,A}}\ar[d]_{\widetilde{\tn{GQ}}([\alpha]_1)\circ-} & & \tn{Hom}_{\cat{PAlg}_1}(\widetilde{k}\db{VQ},A)\ar[d]^{[\alpha]_1\circ-} \\
\tn{Hom}_{\cat{VQuiv}}(VQ,\widetilde{\tn{GQ}}(B))\ar[rr]_{\Psi_{VQ,B}} & & \tn{Hom}_{\cat{PAlg}_1}(\widetilde{k}\db{VQ},B)
}\]
commutes.  Fix $\rho:VQ\to \widetilde{\tn{GQ}}(A)$ and $\alpha\in [\alpha]_1$.  Given two splittings $s:A/J(A)\to A$, $s':B/J(B)\to B$  we have that $\tn{Im}(\alpha s-s'\alpha)\subseteq J(B)$ (where the latter $\alpha$ abusively denotes the map $A/J(A)\to B/J(B)$ induced by $\alpha$).

For any $e\in VQ_0$ with $\rho(e)\neq *$ (the case $\rho(e)=*$ being easier), there is $w\in J(B)$ such that 
\begin{align*}
\Psi_{VQ,B}({\tn{GQ}}(\alpha)\rho)(e) -\alpha\Psi_{VQ,A}(\rho)(e)& = {\tn{GQ}}(\alpha)\rho(e)_{{s'\alpha(A/J)}} -\alpha(\rho(e)_{{s(A/J)}})\\
& = {}^{\mathcal{G}(B)}\alpha\rho(e)_{{s'\alpha(A/J)}} - \alpha(\rho(e)_{{s(A/J)}})\\
& = w+\alpha(\rho(e)_{{s(A/J)}}) - \alpha(\rho(e)_{{s(A/J)}})\\
& = w.
\end{align*}
Also for two splittings $t:J(A)/J^2(A)\to J(A)$ and $t':J(B)/J^2(B)\to J(B)$ we have that $\textrm{Im}(\alpha t-t'\alpha)\subseteq J^2(B)$ (where again the latter $\alpha$ abusively denotes the map $J(A)/J^2(A)\to J(B)/J^2(B)$ induced by $\alpha$).  Now given $v\in VQ_1$ we have
\begin{align*}
\Psi_{VQ,B}({\tn{GQ}}(\alpha)\rho)(v) - \alpha\Psi_{VQ,A}(\rho)(v)& = t'{\tn{GQ}}(\alpha)\rho(v) - \alpha t\rho(v)\\
& = t'\alpha\rho(v) - \alpha t\rho(v) \in J^2(B).
\end{align*}
Therefore we have that $(\Psi_{VQ,B}\circ{\tn{GQ}}(\alpha))(\rho) \sim_1 (\alpha\circ\Psi_{VQ,A})(\rho)$ for any $\rho$ and any $\alpha\in [\alpha]_1$.
\end{proof}

\noindent Summarizing,

\begin{theorem}\label{Theorem Adjunction Level 1}
The functor $\widetilde{k}\db{-}:\cat{VQuiv}\to \cat{PAlg}_1$ is left adjoint to the functor $\widetilde{\tn{GQ}}(-):\cat{PAlg}_1\to \cat{VQuiv}$.
\end{theorem}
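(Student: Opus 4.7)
The plan is to assemble the theorem from the work already done in the paragraphs immediately above. Recall that to exhibit an adjunction between two functors it suffices to give a family of hom-set bijections that is natural in both variables. The candidate family $\Psi_{VQ,A}$ has been constructed explicitly preceding this theorem, so my job reduces to collecting three facts about it: bijectivity, naturality in $A$, and naturality in $VQ$.

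First I would appeal to Lemma \ref{Bijectivity of Phi.}, which establishes that each $\Psi_{VQ,A}$ is a bijection of sets. Next, Lemma \ref{naturality at A in first adjunction} gives naturality in the algebra variable $A$. The only remaining step is naturality in the Vquiver variable: for every morphism $\sigma:VR\to VQ$ in $\cat{VQuiv}$ and every $A$ in $\cat{PAlg}_1$, the square whose horizontal arrows are $\Psi_{VQ,A}$ and $\Psi_{VR,A}$, whose left vertical arrow is precomposition with $\sigma$, and whose right vertical arrow is precomposition with $\widetilde{k}\db{\sigma}$, must commute.

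To carry this step out I would unwind the construction of $\Psi$ together with that of $\widetilde{k}\db{-}$ on morphisms. Fix splittings $(s,t)$ as in the construction of $\Psi$. Given $\rho:VQ\to \widetilde{\tn{GQ}}(A)$, the image $\Psi_{VQ,A}(\rho) = [\varphi_{s,t}]_1$ has as representative the unique continuous algebra homomorphism $k\db{VQ}\to A$ extending $e\mapsto \rho(e)_{{}_{\Sigma}}$ on $\Sigma_{VQ}$ and $v\mapsto t\rho(v)$ on $VQ_1$. Both the composite $\varphi_{s,t}\circ k\db{\sigma}$ and a representative of $\Psi_{VR,A}(\rho\circ\sigma)$ are continuous algebra homomorphisms from $k\db{VR}$ to $A$; by construction of $k\db{\sigma}$ and of $\Psi$, they agree on $\Sigma_{VR}$ and on $VR_1$. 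By the uniqueness clause of Lemma \ref{Lemma UP tensor algebra}, the two maps coincide already in $\cat{PAlg}$, and therefore also in $\cat{PAlg}_1$.

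I do not expect any serious obstacle. The subtler naturality (in $A$) required passing to equivalence classes and has been treated in Lemma \ref{naturality at A in first adjunction}; naturality in $VQ$ is strict, in the sense that it already holds in $\cat{PAlg}$ by virtue of the universal property of the completed tensor algebra. Once both naturality squares are in hand, the adjunction follows immediately from the standard characterization of adjoint functors via natural hom-set bijections.
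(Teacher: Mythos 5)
Your proposal is correct and takes essentially the same route as the paper: the published proof also just assembles the bijectivity of $\Psi_{VQ,A}$ (Lemma \ref{Bijectivity of Phi.}) with naturality in $A$ (Lemma \ref{naturality at A in first adjunction}), declaring naturality in the Vquiver variable a routine calculation. You go slightly further by actually carrying out that routine check, correctly observing that it holds on the nose in $\cat{PAlg}$ via the uniqueness clause of Lemma \ref{Lemma UP tensor algebra}, so nothing is missing.
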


\begin{proof}
The natural isomorphism required is $\Psi$ as defined above.  The results of this section demonstrate the theorem.
\end{proof}

\begin{remark} Restricting and corestricting the functors above, we obtain an adjuntion between the categories $\cat{SVQuiv}$ and $\cat{SPAlg}_1$.
\end{remark}

\begin{remark} A pseudocompact algebra $A$ is said to be \emph{hereditary} if every closed submodule of a projective $A$-module is projective.  It is known (cf. \cite[Theorem 1]{Chin}) that $A$ is hereditary if, and only if, $A$ is isomorphic to $\widetilde{k}\db{\widetilde{\tn{GQ}}(A)}$.  It follows that if we restrict $\cat{PAlg}_1$ to the full subcategory of hereditary algebras, the Adjunction \ref{Theorem Adjunction Level 1} yields an adjoint equivalence of categories.
\end{remark}

\begin{remark}
The Adjunction \ref{Theorem Adjunction Level 1} restricts to an adjunction between the full subcategory of $\cat{VQuiv}$ consisting of acyclic Vquivers and the full subcategory of $\cat{PAlg}_1$ consisting of the triangular finite dimensional algebras (that is, those finite dimensional algebras whose Gabriel quiver is acyclic). 
\end{remark}

\begin{remark} \label{remGQfaithful}
For any algebra $A\in \cat{PAlg}_1$, the morphism $\widetilde{k}\db{\widetilde{\tn{GQ}}(A)}\to A$ is an epimorphism. It follows by abstract nonsense \cite[IV.3, Theorem 1]{MacLane} that the functor $\widetilde{\tn{GQ}}(-)$ is faithful.
\end{remark}

Recall 
%[cite] 
that a closed ideal $I\in k\db{VQ}$ is called a \emph{relation ideal} if $I\subseteq J^2$. A relation ideal $I\in k\db{VQ}$ is \emph{admissible} if there exists a positive integer $n$ such that $J^n\subseteq I$.

\begin{prop}
Every algebra $A$ in $\cat{PAlg}$ is isomorphic to the quotient of a completed path algebra $k\db{VQ}/I$ with $I$ a relation ideal.  The relation ideal $I$ is admissible if, and only if, $A$ is finite dimensional.
\end{prop}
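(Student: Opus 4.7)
The plan is to apply the counit of the adjunction of Theorem \ref{Theorem Adjunction Level 1} at $A$. Concretely, set $VQ := \widetilde{\tn{GQ}}(A)$, which is a finite Vquiver because $A/J^2(A)$ is finite dimensional, so both $A/J(A)$ and $J(A)/J^2(A)$ are. Applying $\Psi_{VQ,A}$ to the identity morphism on $\widetilde{\tn{GQ}}(A)$, and fixing explicit splittings $s\colon A/J(A)\to A$ (by Proposition \ref{WedderburnMalcev}) and $t\colon J(A)/J^2(A)\to J(A)$ (by Lemma \ref{Lemma J to J by J2 splits}), we obtain a specific continuous algebra homomorphism $\varphi = \varphi_{s,t} \colon k\db{VQ}\to A$ in $\cat{PAlg}$. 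It then remains to show (i) $\varphi$ is surjective and (ii) $\ker\varphi \subseteq J^2(k\db{VQ})$.

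For (i), set $T := \varphi(k\db{VQ})$. By Lemma \ref{linearly compact facts}(\ref{linearly compact facts part 2}) the subspace $T$ is closed in $A$, and $A = \invlim_n A/J^n(A)$ by Proposition \ref{prop complete wrt J}, so it suffices to show $T + J^n(A) = A$ for every $n \geqslant 1$. By construction $T$ contains both $s(A/J)$ and $t(J/J^2)$. The former immediately gives $T + J(A) = A$, and since $t$ splits $J \onto J/J^2$ we have $J(A) = t(J/J^2) + J^2(A) \subseteq T + J^2(A)$, whence $T + J^2(A) = A$. Inductively, writing an element of $J^n(A)$ as a sum of products $j_1\cdots j_n$ of elements of $J(A)$ and decomposing each factor as $j_i = t(\overline{j_i}) + z_i$ with $t(\overline{j_i}) \in T\cap J(A)$ and $z_i \in J^2(A)$, the expansion places the ``pure'' term $t(\overline{j_1})\cdots t(\overline{j_n})$ in $T$ and all other terms in $J^{n+1}(A)$, so $J^n(A) \subseteq T + J^{n+1}(A)$ and the induction proceeds.

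For (ii), consider the induced map $\overline{\varphi}\colon k\db{VQ}/J^2(k\db{VQ}) \to A/J^2(A)$. By Lemma \ref{Lemma radical of CPA what we expect}, the domain decomposes as $\Sigma_{VQ} \oplus VQ_1$, and $A/J^2(A)$ decomposes via the Wedderburn--Malcev splitting as $s(A/J) \oplus J(A)/J^2(A)$. Because $\varphi$ was constructed from the identity map on $VQ = \widetilde{\tn{GQ}}(A)$, the map $\overline{\varphi}$ restricts on each summand to the tautological identification (on $\Sigma_{VQ}$ via $s$, on $VQ_1 = \bigoplus f(J/J^2)e$ via $t$), hence $\overline{\varphi}$ is a linear isomorphism. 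It follows that $I := \ker\varphi \subseteq J^2(k\db{VQ})$ is a relation ideal and $A \cong k\db{VQ}/I$, which together with (i) establishes the first assertion.

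For the final assertion about admissibility, if $A$ is finite dimensional then $J^N(A) = 0$ for some $N$, and Lemma \ref{radical powers surjective} applied to the surjection $\varphi$ yields $\varphi(J^N(k\db{VQ})) = 0$, so $J^N(k\db{VQ}) \subseteq I$ and $I$ is admissible. Conversely, if $J^N(k\db{VQ}) \subseteq I$ then $A$ is a quotient of $k\db{VQ}/J^N(k\db{VQ})$, which is finite dimensional by the observation following Lemma \ref{radical facts when J by J2 is fd}. The main technical point is the density-plus-closedness argument for surjectivity in step (i); everything else is essentially book-keeping using identifications already built into the adjunction.
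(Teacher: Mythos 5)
Your proof is correct and follows essentially the same route as the paper: both take (a representative of) the counit of the adjunction of Theorem \ref{Theorem Adjunction Level 1} at $A$ and identify its kernel as the relation ideal, then handle admissibility via $J^n$ mapping onto $J(A)^n$. You merely make explicit two points the paper leaves as remarks elsewhere --- surjectivity of $\varphi_{s,t}$ (via closed image plus density) and the containment $\ker\varphi\subseteq J^2$ (via the induced isomorphism modulo $J^2$).
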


\begin{proof}
Write $J^n = J^n(\widetilde{k}\db{\widetilde{GQ}(A)})$. Any representative of the counit $\varepsilon_A:\widetilde{k}\db{\widetilde{GQ}(A)}\to A$ of the Adjunction \ref{Theorem Adjunction Level 1} yields such a quotient.  Let $I$ be the kernel of such a representative. Then, by construction of $\Psi_{\widetilde{k}\db{\widetilde{GQ}(A)},A}$, $I$ is a closed ideal inside $J^2$, hence it is a relation ideal.

If $A$ is not finite dimensional, then the kernel of $\varepsilon_A$ cannot contain any $J^n$, since the algebra $\widetilde{k}\db{\widetilde{GQ}(A)}/J^n$ is finite dimensional for every $n$.  On the other hand, if $A$ is finite dimensional, then the radical $J(A)$ is nilpotent and so $J(A)^n = 0$ for some $n$.  Since $J^n$ maps onto $J(A)^n$ under $\varepsilon_A$, the result follows. 
\end{proof}

\begin{lemma}\label{unit is isomorphism}
Let $VQ$ be a finite Vquiver and $I$ a relation ideal of $VQ$. If $\alpha\in \tn{Hom}_{\cat{SPAlg}_1}(\widetilde{k}\db{VQ},\widetilde{k}\db{VQ}/I)$ then 
$\Psi^{-1}_{VQ,\widetilde{k}\db{VQ}/I}(\alpha)$ is an isomorphism. 
In particular, the unit of Adjunction \ref{Theorem Adjunction Level 1} is a natural isomorphism.
\end{lemma}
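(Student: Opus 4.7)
The plan is to show that the Vquiver map $\rho := \Psi^{-1}_{VQ, A}(\alpha)\colon VQ \to \widetilde{\tn{GQ}}(A)$, where $A := \widetilde{k}\db{VQ}/I$, is bijective on vertex sets and on each arrow space, and hence is an isomorphism in $\cat{VQuiv}$. The key observation driving the argument is that, because $I$ is a relation ideal, the canonical quotient $\widetilde{k}\db{VQ}\twoheadrightarrow A$ induces isomorphisms $\Sigma_{VQ}\cong A/J(A)$ and $VQ_1 \cong J(A)/J^2(A)$, so that $\widetilde{\tn{GQ}}(A)$ has the same vertex set size and the same arrow-space dimensions as $VQ$.

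I would first fix a surjective representative of $\alpha$, which exists because $\alpha$ lies in $\cat{SPAlg}_1$. By Corollary \ref{radical surjective} and Lemma \ref{radical powers surjective}, the induced maps $\bar\alpha_0\colon\Sigma_{VQ}\to A/J(A)$ and $\bar\alpha_1\colon VQ_1\to J(A)/J^2(A)$ are surjective, and the dimension count above then forces them to be bijections. Since $\bar\alpha_0$ is a surjection between commutative semisimple algebras both isomorphic to a product of copies of $k$, it must permute the primitive idempotents; and since $\bar\alpha_1$ is a $\Sigma_{VQ}$-bimodule homomorphism, it respects the decomposition $VQ_1 = \bigoplus_{e,f} VQ_{e,f}$ and restricts to isomorphisms between corresponding summands.

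By the construction of $\Psi^{-1}$ recalled in the proof of Lemma \ref{Bijectivity of Phi.}, the Vquiver map $\rho$ is obtained by restricting any chosen representative of $\alpha$ to $\Sigma_{VQ}$ and to $VQ_1$ and then projecting to $A/J(A)$ and to $J(A)/J^2(A)$ respectively. The vertex permutation just described yields the required bijection of vertex sets (sending no vertex of $VQ$ to $*$), and the bimodule isomorphism $\bar\alpha_1$ gives that each component $\rho_{e,f}\colon VQ_{e,f}\to \widetilde{\tn{GQ}}(A)_{\rho(e),\rho(f)}$ is an isomorphism. Thus $\rho$ is an isomorphism in $\cat{VQuiv}$. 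For the final clause, the component $\eta_{VQ}\colon VQ\to\widetilde{\tn{GQ}}(\widetilde{k}\db{VQ})$ of the unit equals $\Psi^{-1}_{VQ,\widetilde{k}\db{VQ}}([\tn{id}]_1)$, which is the case $I=0$, $\alpha=\tn{id}$ of what has just been proved; naturality of $\eta$ is automatic from the adjunction.

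The main obstacle I anticipate is bookkeeping: the construction of $\rho$ depends on choices of splittings of $A\to A/J(A)$ and of $J(A)\to J(A)/J^2(A)$, so one must verify carefully that the identification of arrow spaces of $\widetilde{\tn{GQ}}(A)$ with summands of $J(A)/J^2(A)$ is compatible with the vertex bijection coming from $\bar\alpha_0$. This is in effect already handled by the well-definedness of $\Psi$ established earlier, but stating the argument cleanly requires choosing a splitting whose image of primitive idempotents is $\bar\alpha_0$ applied to the canonical idempotents of $\Sigma_{VQ}$, and then tracking how $\bar\alpha_1$ interacts with the corresponding bimodule decomposition.
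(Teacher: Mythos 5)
Your proposal is correct and follows essentially the same route as the paper's (much terser) proof: the paper simply notes that $VQ$ and $\widetilde{\tn{GQ}}(\widetilde{k}\db{VQ}/I)$ are isomorphic by construction (your observation that $I\subseteq J^2$ forces $\Sigma_{VQ}\cong A/J(A)$ and $VQ_1\cong J(A)/J^2(A)$) and that a surjective map between isomorphic finite Vquivers is an isomorphism (your dimension count on vertices and arrow spaces). Your write-up just fills in the details the paper leaves implicit.
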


\begin{proof}
The Vquivers $VQ$ and $\widetilde{\tn{GQ}}(\widetilde{k}\db{VQ}/I)$ are isomorphic by construction.  A surjective map of isomorphic finite Vquivers is an isomorphism.  
\end{proof}

\begin{remark}
As in Remark \ref{remGQfaithful}, it follows from Lemma \ref{unit is isomorphism} and abstract nonsense that $\widetilde{k}\db{-}$ is fully faithful.
\end{remark}

\section{Related adjunctions}\label{section related adjunctions}

We mention two related adjunctions, interesting in their own right.

\subsection{Sets and Semisimple algebras}

Denote by $\cat{PSet}$ the category of pointed finite sets defined as follows.  Denote the objects by $Q_0^* = Q_0\cup\{*\}$.  A morphism $Q_0^*\to R_0^*$ is a pointed map that restricts to a bijection from the elements of $Q_0$ not mapping to $*$ onto $R_0$.  One can regard $\cat{PSet}$ as the full subcategory of $\cat{VQuiv}$ whose objects are those Vquivers in which every arrow space is 0.  One could alternatively describe $\cat{PSet}$ as the category of finite dimensional vector spaces over the field with one element, with morphisms surjective linear maps \cite[Definition 2.1]{szczesny}.

\medskip

Denote by $k_0\db{-}$ the restriction of $k\db{-}$ to the subcategory $\cat{PSet}$.  Explicitly, 
\[k_0\db{Q_0^*} = \prod_{e\in Q_0}k\]
and given $\rho:VQ_0^*\to VR_0^*$, we send $e\in VQ_0$ to $\rho(e)$ if $\rho(e)\neq *$ and to 0 otherwise.  Denote by $\widetilde{k}_0\db{-}$ the composition 
\[\cat{PSet}\xrightarrow{k_0\db{-}} \cat{PAlg} \xrightarrow{\Pi_0}\cat{PAlg}_0\]
where $\Pi_0$ is the canonical projection.

\medskip

Given $A\in \cat{PAlg}$, define the pointed set $\tn{GQ}_0(A)$ to be the set of $\mathcal{G}(A)$-orbits of a complete set of primitive orthogonal idempotents of $A$, together with the point.  A surjective algebra homomorphism $\alpha:A\to B$ yields a well-defined pointed map $\tn{GQ}_0(\alpha):\tn{GQ}_0(A)\to \tn{GQ}_0(B)$ as in the definition of $\tn{GQ}(-)$.  We obtain in this way the functor $\tn{GQ}_0(-):\cat{PAlg}\to \cat{PSet}$.  This functor factorizes uniquely as $\tn{GQ}_0(-) = \widetilde{\tn{GQ}}_0(-)\circ\Pi_0$.  One now easily checks the following:

\begin{prop}
The functor $\widetilde{\tn{GQ}}_0(-):\cat{PAlg}_0\to \cat{PSet}$ is left adjoint to the functor $\widetilde{k}_0\db{-}:\cat{PSet}\to \cat{PAlg}_0$.
\end{prop}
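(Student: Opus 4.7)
The plan is to construct an explicit natural bijection
\[
\Phi_{A,S}:\tn{Hom}_{\cat{PSet}}(\widetilde{\tn{GQ}}_0(A),S)\to \tn{Hom}_{\cat{PAlg}_0}(A,\widetilde{k}_0\db{S})
\]
for every $A\in \cat{PAlg}_0$ and $S\in \cat{PSet}$. The whole argument is much simpler than that of Section \ref{section main adjunction} because $\widetilde{k}_0\db{S} = \prod_{e\in S_0}k$ is semisimple. Consequently any algebra homomorphism $\alpha:A\to \widetilde{k}_0\db{S}$ satisfies $\alpha(J(A))\subseteq J(\widetilde{k}_0\db{S})=0$ (by Lemma \ref{Lemma alg hom sends J to J}) and so factors uniquely as $A\xrightarrow{\pi} A/J(A)\xrightarrow{\bar\alpha}\widetilde{k}_0\db{S}$; moreover, because $J(\widetilde{k}_0\db{S})=0$, the relation $\sim_0$ on these Hom-sets is trivial, so representatives can be chosen without care.

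First I would observe that by Lemma \ref{idempotent orbit characterization lemma} two primitive idempotents of $A$ lie in the same $\mathcal{G}(A)$-orbit if and only if they have the same image in $A/J(A)$, so the non-point part of $\widetilde{\tn{GQ}}_0(A)$ is in canonical bijection with the (unique) set of primitive orthogonal idempotents of $A/J(A)$. To define $\Phi_{A,S}(\rho)$ for a pointed map $\rho:\widetilde{\tn{GQ}}_0(A)\to S$, I specify a surjective algebra homomorphism $\bar\alpha:A/J(A)\to \widetilde{k}_0\db{S}$ on primitive idempotents by sending the idempotent corresponding to an orbit $\mathcal{O}$ to the idempotent indexed by $\rho(\mathcal{O})$ when $\rho(\mathcal{O})\neq *$ and to $0$ otherwise, and then set $\Phi_{A,S}(\rho)=[\bar\alpha\circ\pi]_0$. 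Surjectivity of $\bar\alpha$ is exactly the condition that $\rho$ restricts to a bijection on the non-point part, which is the morphism condition in $\cat{PSet}$, and this matches the morphism condition in $\cat{PAlg}_0$ (respecting primitive idempotents).

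The inverse sends $[\alpha]_0$ to the pointed map ${}^{\mathcal{G}(A)}e\mapsto \alpha(e)$ (interpreted as $*$ when $\alpha(e)=0$), which is well-defined on orbits since $\alpha(J(A))=0$. That the two constructions are mutually inverse is immediate from the factorization through $A/J(A)$ and the bijection between orbits and primitive idempotents of $A/J(A)$. Naturality in both variables reduces to routine diagram checks: the functoriality of $\pi$ and the fact that any morphism $[\beta]_0:\widetilde{k}_0\db{S}\to\widetilde{k}_0\db{T}$ is determined by its effect on primitive idempotents. I do not expect any serious obstacle here; the only tiny point requiring attention is ensuring that the pointed-set morphism condition (bijection on the non-point part) matches the surjectivity on radical quotients required of morphisms in $\cat{PAlg}_0$, which follows directly from the idempotent dictionary above.
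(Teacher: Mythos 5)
Your argument is correct and takes the approach the paper intends (its own ``proof'' is just the remark that one easily checks the claim): the two essential observations --- that $\widetilde{k}_0\db{S}$ is semisimple, so every morphism $A\to\widetilde{k}_0\db{S}$ factors uniquely through $A/J(A)$ by Lemma \ref{Lemma alg hom sends J to J} and the relation $\sim_0$ is trivial on these Hom-sets, and that $\mathcal{G}(A)$-orbits of primitive idempotents correspond bijectively to the primitive idempotents of $A/J(A)$ by Lemma \ref{idempotent orbit characterization lemma} --- are exactly the right ones, and they correctly reconcile the pointed-set morphism condition with surjectivity on radical quotients. No gaps.
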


\subsection{A right adjoint to $\widetilde{\tn{GQ}}(-)$}

The functor $\widetilde{\tn{GQ}}(-)$ is also a left adjoint.  We construct its right adjoint.

Given a finite Vquiver $VQ$, let $k_2\db{VQ} = k\db{VQ}/J^2(k\db{VQ})$.  A map of Vquivers $\rho:VQ\to VR$ yields a  map of algebras  $k\db{\rho}:k\db{VQ}\to k\db{VR}$, which sends $J^2(k\db{VQ})$ to $J^2(k\db{VR})$.  One thus obtains from $k\db{\rho}$ the algebra homomorphism
\[k_2\db{\rho}:k_2\db{VQ}\to k_2\db{VR}\]
in the obvious way.  These definitions yield the covariant functor $k_2\db{-}:\cat{VQuiv}\to \cat{PAlg}$.  By composing with $\Pi_1$ we obtain the functor
\[\widetilde{k}_2\db{-}:\cat{VQuiv}\to \cat{PAlg}_1.\]

\begin{prop}
The functor $\widetilde{k}_2\db{-}$ is right adjoint to the functor $\widetilde{\tn{GQ}}(-)$.
\end{prop}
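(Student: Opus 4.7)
The plan is to construct, in analogy with Section~\ref{section main adjunction}, a natural bijection
\[
\Phi_{A,VQ}:\tn{Hom}_{\cat{PAlg}_1}(A,\widetilde{k}_2\db{VQ})\to \tn{Hom}_{\cat{VQuiv}}(\widetilde{\tn{GQ}}(A),VQ).
\]
First I would produce a natural isomorphism $\epsilon_{VQ}:\widetilde{\tn{GQ}}(\widetilde{k}_2\db{VQ})\to VQ$: since $J(k_2\db{VQ})=VQ_1$ and $J^2(k_2\db{VQ})=0$, the $\mathcal{G}$-orbits of primitive idempotents of $k_2\db{VQ}$ are canonically indexed by $VQ_0$, and for vertices $i,j\in VQ_0$ the arrow space $e_j(J/J^2)e_i=e_j\cdot VQ_1\cdot e_i$ is exactly the bimodule summand $VQ_{i,j}$ by construction of $k\db{VQ}$. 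Setting $\Phi_{A,VQ}([\alpha]_1):=\epsilon_{VQ}\circ\widetilde{\tn{GQ}}([\alpha]_1)$, naturality in both variables is immediate from functoriality of $\widetilde{\tn{GQ}}(-)$ and naturality of $\epsilon$.

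Before proving bijectivity I would reduce to the case $J^2(A)=0$. Applying Lemma~\ref{Lemma alg hom sends J to J} twice, any $\alpha:A\to k_2\db{VQ}$ sends $J^2(A)$ into $J^2(k_2\db{VQ})=0$, and hence factors uniquely through $A\onto A/J^2(A)$. The quotient map respects $\sim_1$, so pre-composition identifies the domain of $\Phi_{A,VQ}$ with that of $\Phi_{A/J^2(A),VQ}$; since $\widetilde{\tn{GQ}}(-)$ depends only on the quotient by $J^2$, its codomain is also unchanged.

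Assume then $A=\Sigma\oplus J$ with $J^2=0$. To invert $\Phi_{A,VQ}$, given $\rho:\widetilde{\tn{GQ}}(A)\to VQ$ I would fix a Wedderburn-Malcev splitting $s:A/J\to A$ (Proposition~\ref{WedderburnMalcev}) and define a linear map $\alpha_s:A\to k_2\db{VQ}$ as follows: on the primitive idempotents of $s(A/J)$, send $e$ to the corresponding vertex $\rho({}^{\mathcal{G}}e)\in\Sigma_{VQ}$ (or to $0$ when the image is $*$); on $J$, use the canonical $A/J$-bimodule decomposition $J=\bigoplus_{e,f} fJe$ and the arrow-space components of $\rho$ to obtain a map $J\to VQ_1$. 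That $\alpha_s$ is an algebra homomorphism is precisely the condition that $\rho$ be a morphism of Vquivers (for mixed products of semisimple and radical elements), combined with the vanishing of $J^2$ on both sides (for products of two radical elements). If $s'$ is another splitting, then $\alpha_s-\alpha_{s'}$ takes values in $VQ_1=J(k_2\db{VQ})$ on $A$ and vanishes on $J(A)$, so $[\alpha_s]_1=[\alpha_{s'}]_1$, along the lines of Lemma~\ref{idempotent orbit characterization lemma} and Section~\ref{section main adjunction}. Set $\Theta_{A,VQ}(\rho):=[\alpha_s]_1$.

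That $\Phi_{A,VQ}$ and $\Theta_{A,VQ}$ are mutually inverse is read off from the constructions. The identity $\Phi\Theta=\id$ holds by construction of $\epsilon$ once one traces through the definitions of $\tn{GQ}$ and $\alpha_s$ on vertices and arrow spaces. For $\Theta\Phi=\id$, if $\rho=\Phi([\alpha]_1)$ and $\alpha_s$ is built from $\rho$ using a splitting $s$, then $\alpha_s$ and $\alpha$ agree on $J(A)$ and coincide modulo $J(k_2\db{VQ})$ on $s(A/J)$, so $\alpha\sim_1\alpha_s$. Naturality of $\Phi$ in both variables is a routine diagram chase in the spirit of Lemma~\ref{naturality at A in first adjunction}. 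I expect the main obstacle to be the trivial-extension step of the third paragraph: verifying that $\alpha_s$ really is an algebra homomorphism and that its $\sim_1$-class is independent of $s$, since these are the two places where the Vquiver-map conditions on $\rho$ and the vanishing of $J^2$ on both sides must be carefully combined.
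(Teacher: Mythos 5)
Your proposal is correct and takes essentially the same route as the paper: the heart of both arguments is the construction $\rho\mapsto\alpha_s$ of an algebra homomorphism $A\to \widetilde{k}_2\db{VQ}$ from a Vquiver map via a Wedderburn--Malcev splitting, with independence of the splitting up to $\sim_1$. The paper leaves bijectivity and naturality to ``the careful reader''; your reduction to the case $J^2(A)=0$ (using that any morphism into $k_2\db{VQ}$ kills $J^2(A)$) and your description of the inverse as $\epsilon_{VQ}\circ\widetilde{\tn{GQ}}(-)$ supply exactly those omitted details and are sound.
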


\begin{proof}
We define the map
\[\Phi_{A,VQ}:\tn{Hom}_{\cat{VQuiv}}(\widetilde{\tn{GQ}}(A), VQ)\to \tn{Hom}_{\cat{PAlg}_1}(A, \widetilde{k}_2\db{VQ})\]
as follows.  Fix a Vquiver map $\rho:\widetilde{\tn{GQ}}(A)\to VQ$.  We construct an algebra homomorphism $\alpha: A\to \widetilde{k}_2\db{VQ}$.  Choose splittings $s:A/J\to A$ and $t:J/J^2\to J$ of the canonical projections as in Section \ref{section main adjunction}, so that
\[A = s(A/J)\oplus t(J/J^2)\oplus J^2.\]
Let $e$ be a primitive idempotent of $A$, $j\in J$ and $w\in J^2$.   Define 
\[\alpha(e,j,w) := \rho({}^{\mathcal{G}}e) + \rho(j+J^2).\]
One checks that if $s',t'$ are different splittings, the map $\beta$ obtained is such that $\alpha\sim_1\beta$, and hence $\alpha$ is well-defined in $\cat{PAlg}_1$. The careful reader may check that $\Phi_{A,VQ}$ is a natural bijection, completing the proof.
\end{proof}

\section{Adjoint equivalence with relation ideals}\label{section adjoint equivalence}

In this final section, we show that with some care, the adjunction of Section \ref{section main adjunction} can be shown to respect ideals in a sense we make precise.  We define a functor $\tn{GQ}_{\infty}(-)$ from $\cat{PAlg}_1$ to a category whose objects are pairs $(VQ,[I])$ with $VQ$ a Vquiver and $[I]$ a certain equivalence class of relation ideals of $k\db{VQ}$.  We show that it is an equivalence of categories.  Restricting to surjective maps in both categories, we exhibit the left adjoint to $\tn{GQ}_{\infty}$.

\subsection{Morphisms close to the identity}

Given a Vquiver $VQ$, denote by $[\id_{VQ}]_1$ the equivalence class in $\cat{PAlg}$ of $\tn{id}_{k\db{VQ}}$ under $\sim_1$. It is easily checked that $[\id_{VQ}]_1$ is a normal subgroup of $\tn{Aut}(k\db{VQ})$.  We give a convenient characterization of the $\sim_1$ equivalence classes when the source is a completed path algebra:

\begin{prop} \label{tilde is controlled on VQuiver side}
Let $VQ$ be an object of $\cat{VQuiv}$ and $\alpha,\beta:k\db{VQ} \to A$ two surjective morphisms in $\cat{PAlg}$. The following are equivalent:
\begin{enumerate}
\item $\alpha \sim_1 \beta$.
\item There is $\delta \in [\id_{VQ}]_1$ such that $\alpha=\beta\delta$.
\end{enumerate}
\end{prop}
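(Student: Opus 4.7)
The direction (2) $\Rightarrow$ (1) is an elementary computation. If $\alpha=\beta\delta$ with $\delta\sim_1\id_{k\db{VQ}}$, then $\alpha-\beta=\beta\circ(\delta-\id)$, so applying Lemma \ref{radical powers surjective} to the surjection $\beta$ (which gives $\beta(J^n(k\db{VQ}))=J^n(A)$) yields $(\alpha-\beta)(k\db{VQ})\subseteq\beta(J)\subseteq J(A)$ and $(\alpha-\beta)(J)\subseteq\beta(J^2)\subseteq J^2(A)$.

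The content of the proposition is (1) $\Rightarrow$ (2), and the plan is to build $\delta$ by specifying it on $\Sigma_{VQ}$ and on $VQ_1$ and then invoking the universal property of the completed tensor algebra (Lemma \ref{Lemma UP tensor algebra}). Let $\{e_i\}$ be the primitive orthogonal idempotents of $k\db{VQ}$. Since $\alpha\sim_1\beta$, the two complete sets $\{\alpha(e_i)\}$ and $\{\beta(e_i)\}$ of primitive orthogonal idempotents of $A$ agree modulo $J(A)$, so Proposition \ref{WedderburnMalcev} yields a single $w\in J(A)$ with $\alpha(e_i)=(1+w)\beta(e_i)(1+w)^{-1}$ for all $i$. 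Using surjectivity of $\beta$ together with Corollary \ref{radical surjective}, I lift $w$ to $w'\in J(k\db{VQ})$ and define $\delta_0:\Sigma_{VQ}\to k\db{VQ}$ to be conjugation by $1+w'$, writing $f_i=\delta_0(e_i)$.

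For $\delta_1:VQ_1\to k\db{VQ}$, first set $\widetilde{v}=(1+w')v(1+w')^{-1}$. A direct computation shows $\widetilde{v}\in f_j k\db{VQ} f_i$ for $v\in e_j VQ_1 e_i$, and expanding $(1+w)\beta(v)(1+w)^{-1}$ shows $\beta(\widetilde{v})\equiv\beta(v)\pmod{J^2(A)}$; combined with $\alpha\sim_1\beta$ this places $\alpha(v)-\beta(\widetilde{v})$ in $\alpha(e_j)J^2(A)\alpha(e_i)$. Applying Lemma \ref{radical powers surjective} to the restriction of $\beta$ to the corner $f_j J^2(k\db{VQ}) f_i$ produces a surjection onto $\alpha(e_j)J^2(A)\alpha(e_i)$; working one finite dimensional corner at a time, I then choose a linear lift $v\mapsto z_v$ of $\alpha(v)-\beta(\widetilde{v})$ landing in $f_j J^2(k\db{VQ}) f_i$, and set $\delta_1(v)=\widetilde{v}+z_v$. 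Corner placement automatically makes $\delta_1$ a $\Sigma_{VQ}$-bimodule map via $\delta_0$.

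Lemma \ref{Lemma UP tensor algebra} then produces an algebra homomorphism $\delta:k\db{VQ}\to k\db{VQ}$ extending $\delta_0$ and $\delta_1$. The equation $\beta\delta=\alpha$ holds on $\Sigma_{VQ}$ and on $VQ_1$ by construction, hence everywhere by uniqueness in the same universal property. The relation $\delta\sim_1\id$ is visible on generators, since $\delta_0(e_i)-e_i\in J$ and $\delta_1(v)-v\in J^2$, and these conditions say exactly that $\delta$ induces the identity on $k\db{VQ}/J$ and on $J/J^2$. Finally, by the earlier proposition that any endomorphism $\sim_1\id$ is automatically an isomorphism, $\delta$ lies in $\tn{Aut}(k\db{VQ})\cap[\id_{VQ}]_1$ as required. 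I expect the main subtle point to be arranging $\delta_1(v)$ to simultaneously (a) live in the correct corner dictated by the conjugated idempotents (so that bimodule compatibility with $\delta_0$ is automatic) and (b) correct the $J^2$-level discrepancy between $\alpha(v)$ and the naive candidate $\beta(\widetilde{v})$; the two-step construction $\widetilde{v}+z_v$ is engineered precisely to handle both demands at once.
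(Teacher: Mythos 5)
Your proof is correct and follows essentially the same strategy as the paper's: first use the uniqueness part of Proposition \ref{WedderburnMalcev} to conjugate the idempotents into place, then correct the arrow spaces by a $J^2$-term lifted through the surjection $\beta$, and assemble the result via the universal property of Lemma \ref{Lemma UP tensor algebra}. The only (cosmetic) difference is packaging: the paper composes two automorphisms $\delta^{(1)}\delta^{(2)}$ (a conjugation followed by a correction that is the identity on $\Sigma$), whereas you build a single $\delta$ in one application of the universal property with $\delta_1(v)=\widetilde{v}+z_v$, using a corner-by-corner lift in place of the paper's $\Sigma$-bimodule splitting $t$ of $\beta\delta^{(1)}$ on $J^2$.
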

\begin{proof}

It is obvious that the second condition implies the first.  Suppose that $\alpha\sim_1 \beta$.  Writing $k\db{VQ} = T\db{\Sigma, V}$ with notation as in Section \ref{subsection completed path algebra}, it follows that 
\[(\alpha-\beta)(\Sigma)\subseteq J(A),\quad (\alpha-\beta)(V)\subseteq J^2(A).\]
By Proposition \ref{WedderburnMalcev} (and arguments similar to those in Lemma \ref{idempotent orbit characterization lemma}) there is $w\in J(A)$ such that $\alpha(x)=(1+w)\beta(x)(1+w)^{-1}$ for any $x\in \Sigma$. Let $v$ be an element of $J(k\db{VQ})$ such that $\beta(v)=w$. 
Define the automorphism $\delta^{(1)}$ of $k\db{VQ}$ by $\delta^{(1)}(x)=(1+v)x(1+v)^{-1}$ for any $x\in k\db{VQ}$. Observe that  
$\alpha=\beta\delta^{(1)}$ on $\Sigma$ and for any $j\in V$ we have 
\begin{align*}	
(\alpha-\beta\delta^{(1)})(j)&=\alpha(j)-\beta((1+v)j(1+v)^{-1})\\
							   &=\alpha(j)-(1+w)\beta(j)(1+w)^{-1}\\
                               &=\alpha(j)-\beta(j)+w_2,	
\end{align*}
with $w_2\in J^2(A)$, so that $\alpha\sim_1 \beta\delta^{(1)}$.
One easily checks that $\delta^{(1)} \in [\id_{VQ}]_1$.

Treat $A$ as a $\Sigma$-bimodule via $\alpha$ ($=\beta\delta^{(1)}$ on $\Sigma$).
% Now we treat $A$ as a $\beta\delta^{(1)}(\Sigma)=\alpha\Sigma)$-bimodule. By abusing notation we say that $A$ is $\Sigma$-bimodule.
%Therefore we have that a map $\alpha-\beta \delta^{(1)}$, restricted to $V$ is a $\Sigma$-bimodule homomorphism.  
Denote by $t:J^2(A) \to J^2(k\db{VQ})$ a $\Sigma$-bimodule splitting of $\beta\delta^{(1)}$, so that $\beta\delta^{(1)} t = \id$ on $J^2(A)$.  Define $\delta^{(2)}_0$ to be the identity automorphism of $\Sigma$ and 
\begin{align*}
\delta_1^{(2)}: V & \to  T[[\Sigma,V]]    \\
j & \mapsto j+t(\alpha-\beta \delta^{(1)})(j),
\end{align*}
a $\Sigma$-bimodule homomorphism.  We obtain from $\delta_0^{(2)}, \delta_1^{(2)}$ (by Lemma \ref{Lemma UP tensor algebra}) a unique endomorphism of $k\db{VQ}$, which is an automorphism by construction.  One easily checks that $\delta^{(2)}\in [\id_{VQ}]_1$: 
\begin{align*}	
(\delta_1^{(2)}-\id_{VQ})(j)&=t(\alpha-\beta \delta^{(1)})(j)
					    =t(w_2)\in J^2(T[[\Sigma,V]]),	
\end{align*}
hence so is $\delta^{(1)}\delta^{(2)}$.  But $\alpha=\beta\delta^{(1)}\delta^{(2)}$ so that $\delta = \delta^{(1)}\delta^{(2)}$ is the required automorphism.
\end{proof}

\begin{remark}\label{counterexample to choose ideal on right}
Consider the algebra $k\db{\bullet\,\,\,\bullet} = k\times k$ and the algebra $A$ of lower triangular $2\times 2$-matrices.  The inclusions $\alpha,\beta:k\db{\bullet\,\,\,\bullet}\to A$
\[\alpha((1,0)) = \begin{pmatrix}
1 & 0 \\ 0 & 0
\end{pmatrix}, \quad \alpha((0,1)) = \begin{pmatrix}
0 & 0 \\ 0 & 1
\end{pmatrix},\]
\[\beta((1,0)) = \begin{pmatrix}
1 & 0 \\ 1 & 0
\end{pmatrix}, \quad \beta((0,1)) = \begin{pmatrix}
0 & 0 \\ -1 & 1
\end{pmatrix}\]
show that Proposition \ref{tilde is controlled on VQuiver side} is false for non-surjective homomorphisms.
\end{remark}

% \begin{remark} \label{tilde is controlled on VQuiver side2}
% A similar argument shows that if $\alpha,\beta:A\to k\db{VQ}$ are two injective morphisms in $\cat{PAlg}$, then $\alpha \sim_1 \beta$ if, and only if, there is $\delta \in [\id_{VQ}]_1$ such that $\alpha=\delta\beta$.
% % Mention that similarly to the proof above one proves the following. If $\alpha,\beta:A\to k\db{VQ}$ are two morphisms in $\cat{SPAlg}$, then $\alpha \sim_1 \beta$ is equivalent to existing of $\delta \in [\id_{VQ}]_1$ such that $\alpha=\delta\beta$.
% \end{remark}

\subsection{VQuivers with ideals and equivalence of categories}
Let $\cat{VQuiv}_{\infty}$ be the category with objects pairs $(VQ,[I])$, where $VQ$ is a finite Vquiver and $[I]$ is the $[\id_{VQ}]_1$ orbit of a relation ideal $I$ of $k[[VQ]]$.  That is,
\[[I]=\{\delta(I)\ | \ \delta\in [\id_{VQ}]_1 \}.\]
 A morphism 
\[\rho:(VQ,[I]) \to (VR,[K])\]
is a Vquiver map $\rho:VQ\to VR$ such that $k[[\rho]]$ takes $I'$ into $K'$ for some $I'\in [I]$ and $K'\in [K]$. 

Define the functor 
$\tn{GQ}_{\infty}(-):\cat{PAlg}_1 \to \cat{VQuiv}_\infty$ on objects by 
\[
	\tn{GQ}_{\infty}(A):=(\widetilde{\tn{GQ}}(A),[K_A]),
\]
where $K_A$ is the kernel of any representative $\widetilde{k}\db{\widetilde{\tn{GQ}}(A)}\to  A$ of the equivalence class of the counit of the adjunction of Theorem \ref{Theorem Adjunction Level 1}.
One checks using Proposition \ref{tilde is controlled on VQuiver side} that this definition does not depend on the choice of representative. 

On morphisms, we simply define 
\[\tn{GQ}_{\infty}([\alpha])=\widetilde{\tn{GQ}}([\alpha]).\]
One easily checks that these definitions yield a functor $\tn{GQ}_{\infty}(-)$.  We require a technical lemma.  Given an ideal $I$ of $k\db{VQ}$, denote by $\pi_I:k\db{VQ}\to k\db{VQ}/I$ the canonical projection.  

\begin{lemma}\label{Lemma unique hom gammaIIprime}
Fix an object $(VQ,[I])$ of $\cat{VQuiv}_{\infty}$ and two ideals $I,I'\in [I]$.  There is a unique isomorphism $\gamma_{II'}:\widetilde{k}\db{VQ}/I\to \widetilde{k}\db{VQ}/I'$ in $\cat{PAlg}_1$ such that $\gamma_{II'}[\pi_I] = [\pi_{I'}]$.
\end{lemma}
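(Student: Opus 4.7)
The plan is to construct $\gamma_{II'}$ by descending an automorphism of $\widetilde{k}\db{VQ}$ which witnesses that $I$ and $I'$ lie in the same $[\id_{VQ}]_1$-orbit, and then to extract uniqueness from the fact that, by Lemma \ref{CancellationProp}, surjective morphisms in $\cat{PAlg}_1$ are epimorphisms.

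For existence, I would first use the definition of $[I]$: since $I,I'\in[I]$ and $[\id_{VQ}]_1$ is a group, there is $\delta\in[\id_{VQ}]_1$ with $\delta(I)=I'$. The algebra automorphism $\delta$ of $k\db{VQ}$ sends $I$ bijectively onto $I'$, so the composite $\pi_{I'}\circ\delta$ vanishes on $I$ and factors uniquely through $\pi_I$ to give an algebra isomorphism $\overline{\delta}:k\db{VQ}/I\to k\db{VQ}/I'$ with $\overline{\delta}\circ\pi_I=\pi_{I'}\circ\delta$. I would then set $\gamma_{II'}:=[\overline{\delta}]_1$; this is an isomorphism in $\cat{PAlg}_1$ because $\overline{\delta}$ is one in $\cat{PAlg}$.

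The required identity $\gamma_{II'}[\pi_I]=[\pi_{I'}]$ then reduces, via $\overline{\delta}\circ\pi_I=\pi_{I'}\circ\delta$, to the verification that $\pi_{I'}\circ\delta\sim_1\pi_{I'}$. This is a short computation: from $\delta\sim_1\id_{k\db{VQ}}$ we have $(\delta-\id)(k\db{VQ})\subseteq J(k\db{VQ})$ and $(\delta-\id)(J(k\db{VQ}))\subseteq J^2(k\db{VQ})$, and applying the surjective homomorphism $\pi_{I'}$, together with Lemma \ref{radical powers surjective} to identify $\pi_{I'}(J^n(k\db{VQ}))$ with $J^n(k\db{VQ}/I')$, yields the two inclusions needed for $\pi_{I'}\delta\sim_1\pi_{I'}$.

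For uniqueness, suppose $\gamma'$ is any morphism in $\cat{PAlg}_1$ with $\gamma'\circ[\pi_I]=[\pi_{I'}]=\gamma_{II'}\circ[\pi_I]$. Since $\pi_I$ is surjective, Lemma \ref{CancellationProp} implies that $[\pi_I]_1$ is right-cancellable in $\cat{PAlg}_1$, and hence $\gamma'=\gamma_{II'}$. The main obstacle is really only organizational — matching up the bookkeeping between orbit representatives and the $\sim_1$-equivalence on morphisms out of $k\db{VQ}$; once the key identity $\overline{\delta}\pi_I=\pi_{I'}\delta$ is in hand, both halves of the lemma follow immediately from results already established.
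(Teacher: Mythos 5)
Your proposal is correct and follows essentially the same route as the paper: descend an automorphism $\delta\in[\id_{VQ}]_1$ with $\delta(I)=I'$ to an isomorphism of the quotients, and get uniqueness from Lemma \ref{CancellationProp}. The only cosmetic difference is that you verify $\pi_{I'}\delta\sim_1\pi_{I'}$ by direct computation, whereas the paper cites the (easy direction of) Proposition \ref{tilde is controlled on VQuiver side} for the same fact.
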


\begin{proof}
We work in $\cat{PAlg}$.  Let $\alpha\in [\tn{id}_{VQ}]_1$ be such that $\alpha(I) = I'$.  This map yields a well-defined algebra homomorphism $\overline{\alpha}:k\db{VQ}/I\to k\db{VQ}/I'$.  By definition, $\overline{\alpha}\pi_I = \pi_{I'}\alpha$.  Letting $\gamma_{II'} = [\overline{\alpha}]$, we obtain $\gamma_{II'}[\pi_I] = [\pi_{I'}]$ by Proposition \ref{tilde is controlled on VQuiver side}.  If $\delta$ is another morphism completing the triangle in $\cat{SPAlg}_1$, then $\gamma_{II'}[\pi_1] = \delta[\pi_1]$, so that $\gamma_{II'}=\delta$ by Lemma \ref{CancellationProp}.
\end{proof}

Throughout Section \ref{section adjoint equivalence}, $\pi_I$ denotes the canonical projection as above and $\gamma_{II'}$ denotes the isomorphism of Lemma \ref{Lemma unique hom gammaIIprime}.

\begin{theorem}
The functor $\tn{GQ}_{\infty}(-):\cat{PAlg}_1\to \cat{VQuiv}_{\infty}$ is an equivalence of categories. 
\end{theorem}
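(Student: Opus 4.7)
The plan is to show that $\tn{GQ}_\infty$ is essentially surjective, faithful, and full.

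\emph{Essential surjectivity.}  Given $(VQ,[I])$, put $A := \widetilde{k}\db{VQ}/I$ for some representative $I\in[I]$ and let $\pi_I:\widetilde{k}\db{VQ}\to A$ be the canonical projection.  By Lemma \ref{unit is isomorphism}, $\rho := \Psi_{VQ,A}^{-1}([\pi_I]_1):VQ\to \widetilde{\tn{GQ}}(A)$ is an isomorphism of Vquivers.  From the adjunction formula $\Psi_{VQ,A}(\rho) = [\varepsilon_A\circ k\db{\rho}]_1$ (a standard consequence of Theorem \ref{Theorem Adjunction Level 1}) I deduce $\varepsilon\circ k\db{\rho}\sim_1\pi_I$ for any representative $\varepsilon\in[\varepsilon_A]_1$ (whose kernel is $K_A$).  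Proposition \ref{tilde is controlled on VQuiver side} then yields $\delta \in [\id_{VQ}]_1$ with $\varepsilon\circ k\db{\rho}=\pi_I\circ\delta$, so $k\db{\rho}(\delta^{-1}(I))\subseteq K_A$ and $\delta^{-1}(I)\in[I]$.  Thus $\rho$ upgrades to an isomorphism $(VQ,[I])\to \tn{GQ}_\infty(A)$ in $\cat{VQuiv}_\infty$.

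\emph{Faithfulness and construction for fullness.}  Since $\tn{GQ}_\infty$ and $\widetilde{\tn{GQ}}$ coincide on morphisms, Remark \ref{remGQfaithful} delivers faithfulness at once.  For fullness, by the previous step I may reduce to a morphism $\rho:\tn{GQ}_\infty(A)\to \tn{GQ}_\infty(B)$.  By the definition of $\cat{VQuiv}_\infty$ there exist $I'\in[K_A]$ and $K'\in[K_B]$ with $k\db{\rho}(I')\subseteq K'$, giving rise to the induced homomorphism $\overline{k\db{\rho}}:\widetilde{k}\db{\widetilde{\tn{GQ}}(A)}/I'\to \widetilde{k}\db{\widetilde{\tn{GQ}}(B)}/K'$.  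Using the canonical isomorphisms of Lemma \ref{Lemma unique hom gammaIIprime} I set
\[
[\alpha]_1 := \gamma_{K'K_B}\circ\overline{k\db{\rho}}\circ\gamma_{K_AI'}:A\to B.
\]

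\emph{Verifying $\widetilde{\tn{GQ}}([\alpha]_1)=\rho$.}  This is the main obstacle.  I will apply $\widetilde{\tn{GQ}}$ factor by factor and exploit naturality of the unit $\eta$ of Theorem \ref{Theorem Adjunction Level 1}.  Under the identifications $VQ\xrightarrow{\sim}\widetilde{\tn{GQ}}(\widetilde{k}\db{VQ}/I')$ supplied by Lemma \ref{unit is isomorphism}, each $\widetilde{\tn{GQ}}(\gamma)$ becomes the identity (uniqueness in Lemma \ref{Lemma unique hom gammaIIprime} combined with the cancellation Lemma \ref{CancellationProp}), while $\widetilde{\tn{GQ}}(\overline{k\db{\rho}})$ becomes $\rho$ via the defining square $\overline{k\db{\rho}}\circ\pi_{I'} = \pi_{K'}\circ k\db{\rho}$ and naturality of $\eta$ at $\rho$.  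The subtlety is tracking the several layers of $\sim_1$-equivalence and $[\id_{VQ}]_1$-orbit classes without confusion; as every map in the construction is surjective, Lemma \ref{CancellationProp} should dispatch each required cancellation.
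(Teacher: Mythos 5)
Your proposal follows the paper's proof in all essentials: essential surjectivity by realizing $(VQ,[I])$ as $\tn{GQ}_\infty(\widetilde{k}\db{VQ}/I)$, faithfulness from Remark \ref{remGQfaithful}, and fullness by exhibiting the preimage $\gamma^{-1}\circ\overline{k\db{\rho}}\circ\gamma$ built from Lemma \ref{Lemma unique hom gammaIIprime}. The only difference is that you spell out, via Proposition \ref{tilde is controlled on VQuiver side}, why the unit isomorphism matches $[I]$ with $[K_A]$ — a step the paper dismisses as obvious — which is a welcome elaboration rather than a departure.
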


\begin{proof}
The functor $\tn{GQ}_{\infty}(-)$ is obviously essentially surjective. As $\widetilde{\tn{GQ}}(-)$ is faithful (see Remark \ref{remGQfaithful}), $\tn{GQ}_{\infty}(-)$ is as well.
Let $A, B$ be objects of $\cat{PAlg}_1$, which we may write as 
$\widetilde{k}\db{VQ}/I$ and 
$\widetilde{k}\db{VR}/K$, respectively. Let $\rho:(VQ,[I])\to (VR,[K])$ be a morphism  and suppose that $k\db{\rho}$ maps $I' \in [I]$ into $K'\in [K]$.  One may check that 
$\tn{GQ}_{\infty}({\gamma_{KK'}}^{-1} \widetilde{k}[[\rho]] \gamma_{II'}) = \rho$, so that $\tn{GQ}_{\infty}(-)$ is full and hence yields an equivalence of categories.
\end{proof}

\subsection{The functor $k_{\infty}\db{-}$ and adjoint equivalence}

It is not difficult to construct (in the spirit of Remark \ref{counterexample to choose ideal on right}) a morphism $\rho:(VQ,[I])\to (VR,[K])$ in $\cat{VQuiv}_{\infty}$ having the property that there are representatives $I'\in [I]$ and $K'\in [K]$ such that $I'$ does not land inside any representative of $[K]$ under $\rho$ and nor does any representative of $[I]$ land inside $K'$.  This makes the construction of an explicit left adjoint to $\tn{GQ}_{\infty}(-)$ rather tricky.  However, Proposition \ref{tilde is controlled on VQuiver side} allows us to be more precise when we restrict to the important special case where we have surjective maps on both sides.  Denote by $\cat{SVQuiv}_{\infty}$ the subcategory of $\cat{VQuiv}_{\infty}$ whose morphisms are surjective maps of Vquivers.

\begin{lemma} \label{any ideal is preimage of some ideal}
If $\rho:(VQ,[I])\to (VR,[K])$ is a morphism in $\cat{SVQuiv}_{\infty}$, then for any $K'' \in [K]$, there is $I''\in [I]$ such that $k[[\rho]]$ maps $I''$ into $K''$.
\end{lemma}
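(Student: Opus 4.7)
The plan is to exploit Proposition \ref{tilde is controlled on VQuiver side} to lift an automorphism witnessing $K''\in [K]$ on the target side to an automorphism witnessing some suitable $I''\in [I]$ on the source side.

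First, by hypothesis there exist representatives $I'\in [I]$ and $K'\in [K]$ such that $k\db{\rho}(I')\subseteq K'$. Given the arbitrary $K''\in [K]$, write $K''=\delta_R(K')$ for some $\delta_R\in [\id_{VR}]_1$. My goal is to find $\delta_Q\in [\id_{VQ}]_1$ making the square
\[
\xymatrix{
k\db{VQ} \ar[r]^{k\db{\rho}} \ar[d]_{\delta_Q} & k\db{VR} \ar[d]^{\delta_R} \\
k\db{VQ} \ar[r]_{k\db{\rho}} & k\db{VR}
}
\]
commute; then $I'':=\delta_Q(I')$ automatically lies in $[I]$ and satisfies $k\db{\rho}(I'')=\delta_R(k\db{\rho}(I'))\subseteq \delta_R(K')=K''$, which is what we want.

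To produce $\delta_Q$, I apply Proposition \ref{tilde is controlled on VQuiver side} to the two surjective morphisms $\alpha:=\delta_R\circ k\db{\rho}$ and $\beta:=k\db{\rho}$ from $k\db{VQ}$ to $k\db{VR}$ (both are surjective because $\rho$ is surjective in $\cat{SVQuiv}_{\infty}$ and $\delta_R$ is an automorphism). To invoke the proposition I need $\alpha\sim_1\beta$: since $\delta_R\sim_1\id_{VR}$ we have $(\id-\delta_R)(k\db{VR})\subseteq J(k\db{VR})$ and $(\id-\delta_R)(J(k\db{VR}))\subseteq J^2(k\db{VR})$, and because $k\db{\rho}$ is a surjective algebra homomorphism it maps $J(k\db{VQ})$ onto $J(k\db{VR})$ by Corollary \ref{radical surjective}; combining these two facts gives $(\alpha-\beta)(k\db{VQ})\subseteq J(k\db{VR})$ and $(\alpha-\beta)(J(k\db{VQ}))\subseteq J^2(k\db{VR})$, as required. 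Proposition \ref{tilde is controlled on VQuiver side} then yields $\delta_Q\in [\id_{VQ}]_1$ with $\alpha=\beta\delta_Q$, that is, $\delta_R\circ k\db{\rho}=k\db{\rho}\circ \delta_Q$.

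With the square commuting, the verification that $I'':=\delta_Q(I')\in [I]$ and $k\db{\rho}(I'')\subseteq K''$ is immediate from the previous paragraph. The only non-routine step is checking that $\alpha\sim_1\beta$, but this reduces mechanically to the defining condition $\delta_R\sim_1\id_{VR}$ together with the radical-preservation property of the surjection $k\db{\rho}$; the genuine content of the lemma is the existence of the lift $\delta_Q$, supplied by Proposition \ref{tilde is controlled on VQuiver side}.
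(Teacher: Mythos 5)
Your proposal is correct and follows essentially the same route as the paper: both arguments observe that $\delta_R k\db{\rho}\sim_1 k\db{\rho}$ and invoke Proposition \ref{tilde is controlled on VQuiver side} to produce an automorphism in $[\id_{VQ}]_1$ intertwining with $k\db{\rho}$ (the paper's $\delta'$ is just the inverse of your $\delta_Q$), after which $I''$ is the image of $I'$ under that automorphism. Your explicit verification that $\delta_R\circ k\db{\rho}\sim_1 k\db{\rho}$ is a detail the paper leaves implicit, but it is the same argument.
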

\begin{proof}
Suppose that $k\db{\rho}$ takes $I'\in [I]$ into $K'\in [K]$ and fix some $K''\in [K]$. By definition, there exists $\delta \in [\id_{VR}]_1$ such that $\delta(K')=K''$. 
Because $\delta \sim_1 \id_{VR}$, we have 
$\delta k[[\rho]] \sim_1 k[[\rho]]$.  Applying Proposition \ref{tilde is controlled on VQuiver side}, there exists $\delta' \in [\id_{VQ}]_1$ such that
$\delta k[[\rho]] \delta'=k[[\rho]]$. 
The automorphism $\delta'^{-1}$ maps the ideal $I'$ into some ideal $I''\in [I]$ and we have
\[
	k[[\rho]](I'')=(\delta k[[\rho]] \delta')(I'')=(\delta k[[\rho]])(I')\subseteq \delta(K')=K''.
\]
\end{proof}

We define the functor $k_{\infty}[[-]]:\cat{SVQuiv}_\infty \to \cat{SPAlg}_1$  as follows.  For each object $(VQ,[I])$, we choose a representative $I\in [I]$ and define
\[
	k_{\infty}\db{(VQ,[I])} := k[[VQ]]/I.
\]
Given a morphism $\rho:(VQ,[I])\to (VR,[K])$ in $\cat{SVQuiv}_{\infty}$ sending $I'\in [I]$ into $K'\in [K]$, we define
\[
	k_{\infty}\db{\rho} := {\gamma_{KK'}}^{-1}\widetilde{k}\db{\rho}\gamma_{II'}: k\db{VQ}/I\to k\db{VR}/K,
\]
where $\gamma_{KK'},\gamma_{II'}$ are defined as in Lemma \ref{Lemma unique hom gammaIIprime}.

\begin{lemma}
With the definitions above, $k_{\infty}[[-]]$ is a well-defined functor. 
\end{lemma}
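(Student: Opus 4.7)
The plan is to verify three conditions in turn: (i) $k_\infty\db{\rho}$ is independent of the chosen representatives $I'\in[I]$ and $K'\in[K]$ (so the definition makes sense), (ii) $k_\infty\db{-}$ preserves identities, and (iii) $k_\infty\db{-}$ preserves composition. Each candidate $k_\infty\db{\rho}$ is automatically surjective because the induced map $\widetilde{k}\db{\rho}:k\db{VQ}/I'\to k\db{VR}/K'$ is surjective ($k\db{\rho}$ being so) and the $\gamma$'s are isomorphisms, so $k_\infty\db{\rho}$ lies in $\cat{SPAlg}_1$. The main tools will be Lemma \ref{CancellationProp} (surjective maps are $\sim_1$-epimorphisms) and the defining relation $\gamma_{II'}[\pi_I]=[\pi_{I'}]$ from Lemma \ref{Lemma unique hom gammaIIprime}.

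For (i), I would suppose that $k\db{\rho}$ sends both $I'\to K'$ and $I''\to K''$, and compare the candidates $f_1 := \gamma_{KK'}^{-1}\widetilde{k}\db{\rho}\gamma_{II'}$ and $f_2 := \gamma_{KK''}^{-1}\widetilde{k}\db{\rho}\gamma_{II''}$ in $\cat{PAlg}_1$. Since $[\pi_I]$ is represented by a surjection, Lemma \ref{CancellationProp} reduces the matter to showing $f_1[\pi_I]=f_2[\pi_I]$. A direct calculation using $\gamma_{II'}[\pi_I]=[\pi_{I'}]$, the commutation $\widetilde{k}\db{\rho}\circ\pi_{I'}=\pi_{K'}\circ k\db{\rho}$, and $\gamma_{KK'}^{-1}[\pi_{K'}]=[\pi_K]$ collapses both sides to $[\pi_K\circ k\db{\rho}]$; the identical computation works for the double-primed pair. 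Condition (ii) is then immediate on taking $I'=I$, since both $\gamma_{II}$ and the induced endomorphism coming from $\id_{VQ}$ are identity maps.

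For (iii), given $\rho:(VQ,[I])\to(VR,[K])$ and $\sigma:(VR,[K])\to(VS,[L])$, the subtle point is to select a common representative at the middle object. I would start from any pair $K^*\in[K]$, $L^*\in[L]$ witnessing that $\sigma$ is a morphism, and then apply Lemma \ref{any ideal is preimage of some ideal} to $\rho$ with the prescribed target $K^*$ to produce $I^*\in[I]$ with $k\db{\rho}(I^*)\subseteq K^*$; note $k\db{\sigma\rho}(I^*)\subseteq L^*$ automatically. By part (i) I may compute $k_\infty\db{\rho}$ and $k_\infty\db{\sigma}$ using the representatives $(I^*,K^*)$ and $(K^*,L^*)$; the inner factor $\gamma_{KK^*}^{-1}\gamma_{KK^*}$ cancels in the product, and functoriality of $k\db{-}$ passed to quotients gives $\widetilde{k}\db{\sigma}\circ\widetilde{k}\db{\rho}=\widetilde{k}\db{\sigma\rho}$, so that both sides equal $\gamma_{LL^*}^{-1}\widetilde{k}\db{\sigma\rho}\gamma_{II^*}$ which is $k_\infty\db{\sigma\rho}$ computed with the pair $(I^*,L^*)$.

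I expect the main obstacle to be (iii): the individual $\gamma$'s and induced quotient maps each depend nontrivially on the representatives chosen, while only their combined composition is invariant. Lemma \ref{any ideal is preimage of some ideal} is the crucial technical input that resolves this, since it lets one end of each matched pair of ideals be prescribed freely, which is exactly what is needed to align the intermediate representative $K^*$ for both $\rho$ and $\sigma$ simultaneously.
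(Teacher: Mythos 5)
Your proposal is correct and follows essentially the same route as the paper: the paper establishes independence of representatives by pasting a commutative hexagon out of the defining triangles $\gamma_{II'}[\pi_I]=[\pi_{I'}]$ (which amounts to the same cancellation-against-$[\pi_I]$ argument you make explicit via Lemma \ref{CancellationProp}), and it proves compatibility with composition exactly as you do, invoking Lemma \ref{any ideal is preimage of some ideal} to align the intermediate representative so that the factors $\gamma_{KK^*}^{-1}\gamma_{KK^*}$ cancel. Your added remark that each $k_\infty\db{\rho}$ is surjective, hence lands in $\cat{SPAlg}_1$, is a small point the paper leaves implicit.
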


\begin{proof}
First we show that $k_{\infty}[[\rho]]$ does not depend on the choice of representatives $I',K'$.  Suppose that $I''\in [I], K''\in [K]$ are other representatives such that $k\db{\rho}(I'')\subseteq K''$.  Define $\gamma_{I'I''} = \gamma_{II''}{\gamma_{II'}}^{-1}$ and $\gamma_{K'K''} = \gamma_{KK''}{\gamma_{KK'}}^{-1}$.  In the following diagram, each small shape commutes by definition except the inner triangles, which are easily seen to commute.

\[
\xymatrix{    
&\frac{k\db{VQ}}{I'}\ar[rrr]\ar[dd]_{\gamma_{I'I''}}  &&& \frac{k\db{VR}}{K'}\ar[dd]^{\gamma_{K'K''}}\ar[dr]^{{\gamma_{KK'}}^{-1}}   \\
\frac{k\db{VQ}}{I}\ar[dr]_{\gamma_{II''}}\ar[ur]^{\gamma_{II'}}&& k\db{VQ}\ar[r]_{\widetilde{k}\db{\rho}}\ar[ul]_{\pi_{I'}}\ar[dl]_{\pi_{I''}} & k\db{VR}\ar[ur]_{\pi_{K'}}\ar[dr]_{\pi_{K''}} && \frac{k\db{VR}}{K} \\
&\frac{k\db{VQ}}{I''}\ar[rrr]  &&&  \frac{k\db{VR}}{K''}\ar[ur]_{{\gamma_{KK'}}^{-1}}    }
\]
It follows that the large hexagon commutes.  But the upper path is $k_{\infty}\db{\rho}$ defined with respect to $I',K'$ and the lower is $k_{\infty}\db{\rho}$ defined with respect to $I'',K''$.  Hence $k_{\infty}\db{\rho}$ is well-defined.

\medskip

Now we check functoriality.  We have
\[k_{\infty}\db{\tn{id}_{(VQ,I)}} = {\gamma_{II}}^{-1}\tn{id}_{kVQ/I}\gamma_{II} = \tn{id}_{kVQ/I}.\]
Suppose we have
\[(VQ,[I])\xrightarrow{\rho} (VR,[K]) \xrightarrow{\theta} (VS,[L])\]
and suppose that $k\db{\theta}(K')\subseteq L'$.  By Lemma \ref{any ideal is preimage of some ideal} we can find $I'$ such that $k\db{\rho}(I')\subseteq K'$, and so we have
\begin{align*}
k_{\infty}\db{\theta}{k}_{\infty}\db{\rho} & = ({\gamma_{LL'}}^{-1}\widetilde{k}\db{\theta}\gamma_{KK'})({\gamma_{KK'}}^{-1}\widetilde{k}\db{\rho}\gamma_{II'}) \\
& = {\gamma_{LL'}}^{-1}\widetilde{k}\db{\theta}k\db{\rho}\gamma_{II''} \\
& = {\gamma_{LL'}}^{-1}\widetilde{k}\db{\theta\rho}\gamma_{II''} \\
& = k_{\infty}\db{\theta\rho}.
\end{align*}
\end{proof}

\begin{lemma}\label{Lemma adjunction factorization property}
A morphism $\gamma:\widetilde{k}\db{VQ}\to A$ factors as $\gamma=\varepsilon_A\circ \widetilde{k}\db{\Phi_{VQ,A}(\gamma)}$, where $\Phi_{VQ,A}$ is the inverse of $\Psi_{VQ,A}$ as in Section \ref{section main adjunction} and $\varepsilon$ is the counit of the adjuntion of Theorem \ref{Theorem Adjunction Level 1}.

\end{lemma}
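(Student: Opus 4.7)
The plan is to recognise this as the standard formula expressing the adjunction bijection in terms of its counit and the left adjoint, which is a formal consequence of naturality of $\Psi$ in the first variable.

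First, by the usual recipe for extracting a counit from a hom-set bijection, $\varepsilon_A$ is a representative of $\Psi_{\widetilde{\tn{GQ}}(A),A}(\tn{id}_{\widetilde{\tn{GQ}}(A)})$. Setting $\rho:=\Phi_{VQ,A}(\gamma):VQ\to \widetilde{\tn{GQ}}(A)$, naturality of $\Psi$ in its (contravariant) first variable gives a commutative square
\[
\xymatrix@C=3em{
\tn{Hom}_{\cat{VQuiv}}(\widetilde{\tn{GQ}}(A),\widetilde{\tn{GQ}}(A))\ar[r]^-{\Psi_{\widetilde{\tn{GQ}}(A),A}}\ar[d]_{-\circ \rho} & \tn{Hom}_{\cat{PAlg}_1}(\widetilde{k}\db{\widetilde{\tn{GQ}}(A)},A)\ar[d]^{-\circ \widetilde{k}\db{\rho}}\\
\tn{Hom}_{\cat{VQuiv}}(VQ,\widetilde{\tn{GQ}}(A))\ar[r]_-{\Psi_{VQ,A}} & \tn{Hom}_{\cat{PAlg}_1}(\widetilde{k}\db{VQ},A)
}
\]
Chasing $\tn{id}_{\widetilde{\tn{GQ}}(A)}$ around this square yields $\Psi_{VQ,A}(\rho)=\varepsilon_A\circ \widetilde{k}\db{\rho}$. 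Applying $\Psi\circ\Phi=\tn{id}$ then gives
\[
\gamma=\Psi_{VQ,A}(\Phi_{VQ,A}(\gamma))=\varepsilon_A\circ \widetilde{k}\db{\Phi_{VQ,A}(\gamma)},
\]
as required.

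The main obstacle is that naturality of $\Psi$ in the first variable is only asserted (not proved) in the paragraph preceding Lemma \ref{naturality at A in first adjunction}. To close the gap one would briefly verify it by tracing through the construction of $\Psi$ from Section \ref{section main adjunction}. Fix splittings $s,t$ as there, and for a Vquiver map $\sigma:VQ'\to VQ$, note that the two maps $\varphi_0:\Sigma_{VQ'}\to A$ and $\varphi_1:VQ'_1\to A$ built from $\rho\sigma$ coincide with the precompositions of the analogous maps built from $\rho$ with the maps on $\Sigma_{VQ'}$ and $VQ'_1$ induced by $\sigma$. By the uniqueness clause of Lemma \ref{Lemma UP tensor algebra}, this identifies $\Psi_{VQ',A}(\rho\sigma)$ with $\Psi_{VQ,A}(\rho)\circ \widetilde{k}\db{\sigma}$ in $\cat{PAlg}_1$, establishing the required naturality and hence the lemma.
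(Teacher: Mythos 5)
Your proof is correct, and it reaches the formula by a slightly different formal route than the paper. You derive the identity $\Psi_{VQ,A}(\rho)=\varepsilon_A\circ \widetilde{k}\db{\rho}$ directly from naturality of $\Psi$ in the \emph{first} variable, by chasing $\tn{id}_{\widetilde{\tn{GQ}}(A)}$ around one naturality square and using that $\varepsilon_A$ is $\Psi_{\widetilde{\tn{GQ}}(A),A}(\tn{id})$; this is the textbook expression of the inverse hom-set bijection in terms of the counit. The paper instead starts from the unit-side formula $\Phi_{VQ,A}(\gamma)=\widetilde{\tn{GQ}}(\gamma)\circ\eta_{VQ}$, applies the functor $\widetilde{k}\db{-}$, and then invokes naturality of $\varepsilon$ together with the triangle identity $\varepsilon_{\widetilde{k}\db{VQ}}\circ\widetilde{k}\db{\eta_{VQ}}=\tn{id}$. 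Both arguments are pure abstract nonsense resting on Theorem \ref{Theorem Adjunction Level 1}; yours is the more economical (one naturality square, no triangle identity), while the paper's leans on the unit--counit calculus it has already set up. Your closing remark is also well taken: naturality of $\Psi$ in the first variable is only asserted in the paper (``a routine calculation''), and your sketch of how to verify it from the construction of $\Psi$ via the universal property of Lemma \ref{Lemma UP tensor algebra} is exactly what is needed to make that assertion precise, so no gap remains.
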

\begin{proof}
This is standard abstract nonsense.   Letting $\eta$ be the unit of the adjunction, we have that $\Phi_{VQ,A}(\gamma) = \widetilde{\tn{GQ}}(\gamma)\eta_{VQ}$.  Hence
\begin{align*}
\varepsilon_A\circ \widetilde{k}\db{\Phi_{VQ,A}(\gamma)} & = \varepsilon_A \circ\widetilde{k}\db{\widetilde{\tn{GQ}}(\gamma)\eta_{VQ}} \\
& = \varepsilon_A\circ \widetilde{k}\db{\widetilde{\tn{GQ}}(\gamma)}\circ \widetilde{k}\db{\eta_{VQ}} \\
& =  \gamma\circ\varepsilon_{\widetilde{k}\db{VQ}}\circ \widetilde{k}\db{\eta_{VQ}} \qquad (\hbox{by naturality of }\varepsilon)\\
& =  \gamma. \qquad\qquad\qquad\qquad (\hbox{by counit-unit equations})
\end{align*}
\end{proof}

We have the following 

\begin{theorem}
The functors $k_{\infty}[[-]]$ and $\tn{GQ}_{\infty}(-)$ form an adjoint equivalence between the categories $\cat{SVQuiv}_\infty$ and $\cat{SPAlg}_1$.
\end{theorem}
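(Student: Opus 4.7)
The strategy is to lift the adjunction of Theorem \ref{Theorem Adjunction Level 1} -- restricted to surjective morphisms -- to the ideal-decorated setting, using the hom-set bijection $\Psi$ already at our disposal. For $(VQ,[I])\in \cat{SVQuiv}_\infty$ and $A\in \cat{SPAlg}_1$, the aim is to construct a natural bijection
\[
\Psi^\infty_{(VQ,[I]),A}:\tn{Hom}_{\cat{SVQuiv}_\infty}((VQ,[I]),\tn{GQ}_\infty(A))\to \tn{Hom}_{\cat{SPAlg}_1}(k_\infty\db{(VQ,[I])},A)
\]
by showing that a morphism of the source corresponds, via $\Psi_{VQ,A}$, to an algebra morphism $\widetilde{k}\db{VQ}\to A$ that factors through $\widetilde{k}\db{VQ}/I'$ for a suitable $I'\in[I]$. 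Concretely, a surjective Vquiver map $\rho:VQ\to \widetilde{\tn{GQ}}(A)$ lies in the source precisely when $k\db{\rho}(I')\subseteq K'$ for some representatives $I',K'$. By Lemma \ref{Lemma adjunction factorization property}, the class $\Psi_{VQ,A}(\rho)$ admits a representative $\varepsilon_A\circ\widetilde{k}\db{\rho}$, and the ideal-containment is exactly the condition for this composite to vanish on $I'$, so it descends to the desired morphism of quotients. The inverse is obtained by precomposing any $k_\infty\db{(VQ,[I])}\to A$ with the canonical projection $\pi_{I'}$ and applying $\Psi^{-1}_{VQ,A}$.

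Next, I will verify naturality in both variables. Naturality in $A$ is a diagram-chase combining Lemma \ref{naturality at A in first adjunction} with the fact that the isomorphisms $\gamma_{II'}$ of Lemma \ref{Lemma unique hom gammaIIprime} fit coherently into the construction of $k_\infty\db{-}$ on morphisms; naturality in $(VQ,[I])$ follows directly from the universal property of the completed tensor algebra. The subtlety is that composing morphisms in $\cat{SVQuiv}_\infty$ or precomposing with maps may force a change of ideal representatives, but Lemma \ref{any ideal is preimage of some ideal} supplies the needed flexibility in the surjective setting.

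To promote the hom-set bijection to an adjoint equivalence, I will exhibit the unit and counit as natural isomorphisms. The unit at $(VQ,[I])$ is the morphism arising from Lemma \ref{unit is isomorphism}, identifying $VQ$ with $\widetilde{\tn{GQ}}(\widetilde{k}\db{VQ}/I)$ and $[I]$ with $[K_{\widetilde{k}\db{VQ}/I}]$, and is invertible by construction. The counit at $A$ is (a representative of) the counit $\varepsilon_A:\widetilde{k}\db{\widetilde{\tn{GQ}}(A)}/K_A\to A$ of Theorem \ref{Theorem Adjunction Level 1}, which is an isomorphism because $K_A$ was defined as its kernel. The triangle identities then descend from the triangle identities of the original adjunction. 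The principal obstacle throughout is controlling the dependence on representatives of ideal classes; this bookkeeping is handled uniformly by Proposition \ref{tilde is controlled on VQuiver side}, Lemma \ref{Lemma unique hom gammaIIprime}, and Lemma \ref{any ideal is preimage of some ideal}, so what remains is a careful but routine diagram chase anchored in the original adjunction.
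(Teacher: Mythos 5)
Your proposal is correct and follows essentially the same route as the paper: both lift the bijection of Theorem \ref{Theorem Adjunction Level 1} to the ideal-decorated setting by composing with the projections $\pi_{I}$, control kernels and representative-independence via Lemma \ref{Lemma adjunction factorization property}, Proposition \ref{tilde is controlled on VQuiver side}, Lemma \ref{Lemma unique hom gammaIIprime} and Lemma \ref{any ideal is preimage of some ideal}, and then identify the unit and counit as isomorphisms exactly as you describe. The only cosmetic difference is that you build the bijection in the direction $\Psi^{\infty}$ while the paper constructs its inverse $\Phi^{\infty}:\alpha\mapsto\Phi_{VQ,A}(\alpha\pi_I)$, using Lemma \ref{CancellationProp} for injectivity.
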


\begin{proof}
Denote the mutually inverse isomorphisms of Adjuntion \ref{Theorem Adjunction Level 1} by
\[\Phi_{VQ,A} :  \Hom_{\cat{SPAlg}_1}(\widetilde{k}\db{VQ},A) \longleftrightarrow \Hom_\cat{SVQuiv}(VQ,\widetilde{\tn{GQ}}(A)) : \Psi_{VQ,A}\]
and let $\varepsilon$ be the counit of this adjunction.  Let $\pi_I:k\db{VQ}\to k\db{VQ}/I$ be the canonical projection.  We define the map
\[\Phi^{\infty}_{(VQ,[I]),A} : \Hom_{\cat{SPAlg}_1}(k\db{VQ}/I, A) \to \Hom_{\cat{SVQuiv}_{\infty}}((VQ,[I]),\tn{GQ}_{\infty}(A))\]
by $\alpha\mapsto \Phi_{VQ,A}(\alpha\pi_I)$.  This map is
\begin{itemize}
\item Well-defined by Lemma \ref{Lemma adjunction factorization property}, because $\alpha\pi_I = \varepsilon_A\circ\widetilde{k}\db{\Phi_{VQ,A}(\alpha\pi_I)}$, thus $\widetilde{k}\db{\Phi_{VQ,A}(\alpha\pi_I)}(I)\subseteq \tn{Ker}(\varepsilon_A)$ as required.

\item Injective, because $\alpha\mapsto \alpha\pi_I$ is injective by Lemma \ref{CancellationProp} and $\Phi_{VQ,A}$ is injective.

\item Surjective: fix $\rho:(VQ,[I])\to (\widetilde{\tn{GQ}}(A),[K])$ (where $K$ the kernel of a representative of $\varepsilon_A$).  Then $\Psi_{VQ,A}(\rho):\widetilde{k}\db{VQ}\to A$ has some $I$ in its kernel (again by Lemma \ref{Lemma adjunction factorization property}) and so can be written as $\alpha\pi_I$ for some $\alpha:k\db{VQ}/I\to A$.  Now
\[\Phi^{\infty}_{(VQ,[I]),A}(\alpha) = \Phi_{VQ,A}\Psi_{VQ,A}(\rho) = \rho.\]

\item Natural in $(VQ,[I])$: fix $\rho:(VQ,[I])\to (VR,[L])$ and suppose $\rho(I)\subseteq L$.  Naturality can be seen  by applying the  functor $\tn{Hom}_{\cat{SPAlg}_1}(-,A)$  to the commutative square $k^{\infty}\db{\rho}\pi_I = \pi_L\widetilde{k}\db{\rho}$ and gluing it to the corresponding square for $\Phi$.

\item Natural in $A$: this follows by a similar argument.

\end{itemize}
It follows from the above observations that the functors are adjoint. The component $\eta_{(VQ,[I])}^{\infty}$  of the unit at $(VQ,[I]) \in \cat{SVQuiv}_\infty$ is
\[
\Phi_{VQ,k[[VQ]]/I}(\pi_I),
\]
which is invertible by Lemma \ref{unit is isomorphism}.
The component $\varepsilon_A^{\infty}$ of the counit at $A\in \cat{SPAlg}_1$ is the factorization of $\varepsilon_A$ through its kernel $K$:
\[
	\varepsilon_A=\varepsilon_A^{\infty}\pi_K,
\]
and so $\varepsilon_A^{\infty}$ is an isomorphism.
\end{proof}

\end{document}